\begin{document}

\title{Determining Fuchsian groups by their finite quotients}
\author{M.R.~Bridson\thanks{supported by a Senior Fellowship from the
EPSRC of Great Britain and a Wolfson Research Merit Award from the 
Royal Society of London},~M.D.E.~Conder\thanks{supported in part by the 
N.Z.~Marsden Fund (grant UOA1015) and a James Cook Fellowship of the Royal Society of 
New Zealand}~ \&~ A.W.~Reid\thanks{supported in part 
by NSF grants}}

\def\ker{\rm{ker}}
\def\sup{\rm{sup}}
\def\inf{\rm{inf}}
\def\ab{\rm{ab}}
\def\cd{{\rm{cd}}}
\def\mod{{\rm mod}}
\def\lcm{{\rm lcm}}
\def\gcd{{\rm gcd}}
\def\PSL{{\rm PSL}}
\def\SL{{\rm SL}}
\def\SO{{\rm SO}}
\def\SU{{\rm SU}}
\def\GL{{\rm GL}}
\def\PGL{{\rm PGL}}
\def\Isom{\rm{Isom}}
\def\qed{ $\sqcup\!\!\!\!\sqcap$}
\def\rank{\mbox{\rm{rank}}}
\def\Ad{\mbox{\rm{Ad}}}
\def\tr{\mbox{\rm{tr}}}
\def\P{\mbox{\rm{P}}}
\def\O{\mbox{\rm{O}}}
\def\G{\Gamma}
\def\Z{\Bbb Z}
\def\C{\mathcal{C}}
\def\Hom{{\rm{Hom}}}
\def\Epi{{\rm{Epi}}}
\def\conj{\frak{cf}} 
\def\<{\langle}
\def\>{\rangle}
\def\wh{\widehat}
\def\R{\Bbb R}
\def\CC{\Bbb C}
\def\Q{\Bbb Q}
\def\ds{\displaystyle}

\newtheorem{theorem}{Theorem}[section]
\newtheorem{lemma}[theorem]{Lemma}
\newtheorem{corollary}[theorem]{Corollary}
\newtheorem{proposition}[theorem]{Proposition}
\newtheorem{prop}[theorem]{Proposition}



\theoremstyle{definition} 
\newtheorem{definition}[theorem]{Definition}
\newtheorem{remark}[theorem]{Remark}
\newtheorem{question}[theorem]{Question}

\maketitle

\begin{abstract}
  Let $\C(\G)$ be the set of isomorphism classes of the finite groups
  that are quotients (homomorphic images) of $\G$. We investigate the extent to which
  $\C(\G)$ determines $\G$ when $\G$ is a group of geometric interest.
  If $\Gamma_1$ is a lattice in ${\rm{PSL}}(2,\R)$ and $\Gamma_2$ is a
  lattice in any connected Lie group, then ${\cal C}(\Gamma_1) = {\cal
    C}(\Gamma_2)$ implies that $\Gamma_1 \cong \Gamma_2$. If $F$ is a free
  group and $\G$ is a right-angled Artin group or a residually free
  group (with one extra condition), then $\C(F)=\C(\G)$ implies that 
  $F\cong\G$. If $\G_1<{\rm{PSL}}(2,\Bbb C)$ and $\G_2< G$ are
  non-uniform arithmetic lattices, where $G$ is a semi-simple Lie
  group with trivial centre and no compact factors, then $\C(\G_1)=
  \C(\G_2)$ implies that $G \cong {\rm{PSL}}(2,\Bbb C)$ and that $\G_2$ belongs to one
  of finitely many commensurability classes. These results are proved
  using the theory of profinite groups; we do not exhibit explicit
  finite quotients that distinguish among the groups in question. But in 
  the special case of two non-isomorphic triangle groups, 
  we give an explicit description of finite quotients that distinguish
  between them.
\end{abstract}

%
%
%
%
\section{Introduction}

Let $\G$ be a finitely-generated group and let $\C(\G)$ denote the set of isomorphism classes of finite groups
that are quotients (homomorphic images) of $\G$.
If  $\G$ is residually finite, then one can
recover any finite portion of its Cayley graph or multiplication table
by examining the finite quotients of the group.  It
is therefore
natural to wonder whether, under reasonable hypotheses, the set 
$\C(\G)$ might determine $\Gamma$ up to isomorphism. 
(One certainly needs some hypotheses: for example, Remeslenikov  \cite{rem} 
showed that 
a finitely-generated nilpotent group is not always uniquely determined by $\C(\G)$.)

A celebrated instance of this question is the following (see
Problem (F14) in  \cite{NYG}):
If $F_n$ is the free group of rank $n$, and $\Gamma$ is a finitely-generated, 
residually finite group, then does
${\cal C}(\Gamma) = {\cal C}(F_n)$ imply that $\Gamma\cong F_n$? This question
remains out of reach for the moment, as does the broader question of whether every Fuchsian group 
is distinguished from other finitely-generated, residually finite groups by its set of finite quotients.
But in  this paper we 
shall answer these questions in the affirmative for groups $\Gamma$ that 
belong to various classes of groups that cluster naturally around Fuchsian groups. 
For example, we prove the following:

\begin{theorem}
\label{main1}
Let $\Gamma_1$ be a finitely-generated Fuchsian group and
let $\Gamma_2$ be a lattice in a connected Lie group.
If ${\cal C}(\Gamma_1) =
{\cal C}(\Gamma_2)$, then 
$\Gamma_1 \cong \Gamma_2$.
\end{theorem}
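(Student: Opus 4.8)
The plan is to translate the hypothesis into a statement about profinite completions and then weigh the rigid homological and cohomological structure of Fuchsian groups against the structure theory of lattices in connected Lie groups. For finitely generated groups the condition $\C(\G_1)=\C(\G_2)$ is equivalent to the existence of a topological isomorphism $\widehat{\G}_1\cong\widehat{\G}_2$: a finite group is a quotient of $\G_i$ exactly when it is a continuous quotient of $\widehat{\G}_i$, and a finitely generated profinite group is determined by its set of finite continuous quotients. So I write $\widehat{\G}$ for this common completion and aim to deduce $\G_1\cong\G_2$.

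Next I would record the profinite invariants furnished by $\G_1$ being Fuchsian. A finitely generated Fuchsian group is virtually free (when non-cocompact or elementary) or virtually a closed surface group; in all cases it is good in the sense of Serre, of virtual cohomological dimension at most $2$, and of type $F$. Hence from $\widehat{\G}$ one reads off: the rational first Betti number $b_1(N)$ of every finite-index subgroup $N$, via the open subgroups of $\widehat{\G}$; the fact that $\widehat{\G}$ is virtually of cohomological dimension at most $2$ at every prime, together with a well-defined Euler characteristic equal to $\chi(\G_1)$; whether $\G_1$ is non-elementary, equivalently $\chi(\G_1)<0$, equivalently $b_1(N)$ grows linearly in $[\G_1:N]$; and whether $\G_1$ is cocompact, equivalently whether $\widehat{\G}$ has virtual cohomological dimension $2$ rather than $\le 1$. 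In particular, since every infinite Fuchsian group virtually surjects onto $\Z$, $\widehat{\G}$ does not have property FAb, and \emph{every} Fuchsian group is either virtually polycyclic or has $b_1(N)$ growing linearly in $[\G_1:N]$.

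Now I would invoke the structure theory of lattices. Let $R$ be the solvable radical of $G$ and $S=G/R$ the semisimple quotient; after replacing $\G_2$ by a finite-index subgroup (harmless for the invariants above), Mostow's structure theorem lets me take $\Lambda:=\G_2\cap R$ to be a normal subgroup of $\G_2$ which is a lattice in $R$, hence finitely generated virtually polycyclic, with $\bar\G:=\G_2/\Lambda$ a lattice in $S$ and $S$ without compact factors. If $S=1$ then $\G_2$ is virtually polycyclic; the bound on cohomological dimension forces Hirsch length at most $2$, and comparing abelianizations, Euler characteristics and whether $\widehat{\G}$ is abelian pins $\G_2$ down to $1$, a finite cyclic group, $\Z$ or $D_\infty$, each the unique Fuchsian group with its profinite completion, so $\G_1\cong\G_2$. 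If $S\ne1$, a reducible lattice is virtually a product so one reduces to $S$ simple; and because lattices in $\mathrm{Sp}(n,1)$, in $F_4^{-20}$, and in semisimple groups of $\R$-rank $\ge 2$ have property (T), while irreducible higher-rank lattices satisfy the Margulis normal subgroup theorem, such a $\bar\G$ has $b_1$ of all finite-index subgroups equal to $0$; through the extension by $\Lambda$ this bounds $b_1$ of finite-index subgroups of $\G_2$ by the Hirsch length of $\Lambda$, while $\G_2$ is not virtually polycyclic --- contradicting the previous paragraph. This leaves $S$ among $\PSL(2,\R)$, $\PSL(2,\CC)$, $\SO(n,1)$ and $\SU(n,1)$. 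For the last three a cocompact lattice is a Poincar\'e-duality group of dimension $\ge 3$ and a non-cocompact one has Euler characteristic $0$ with $b_1$ not growing linearly; both are incompatible with the invariants of a Fuchsian group --- unconditionally for $\PSL(2,\CC)$, where $3$-manifold groups are known to be good, and for the cusped cases of $\SO(n,1)$ and $\SU(n,1)$, where a peripheral $\Z^{d}$ with $d\ge 3$ embeds in $\widehat{\G}$ and already forces cohomological dimension $\ge 3$. Hence $S=\PSL(2,\R)$; and if $R$ is nontrivial then $\Lambda$ is an infinite normal virtually polycyclic subgroup of $\G_2$, forcing $\G_2$ to have cohomological dimension $\ge 3$ or Euler characteristic $0$ incompatibly with $\widehat{\G}$, so $R$ is trivial and $\G_2$ is a lattice in $\PSL(2,\R)$, i.e. Fuchsian. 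Finally $\G_1$ and $\G_2$ are Fuchsian with isomorphic profinite completions, so $\G_1\cong\G_2$ by the profinite rigidity of Fuchsian groups among themselves, established separately in this paper.

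The hard part will be the last step of the semisimple case: ruling out that a cocompact Fuchsian group is profinitely equivalent to a \emph{cocompact} lattice in $\SO(n,1)$ with $n\ge 4$ or in $\SU(n,1)$ with $n\ge 2$. Here $\G_2$ is a Poincar\'e-duality group of dimension $\ge 3$, so one wants $\mathrm{cd}_p\widehat{\G}\ge 3$ for some $p$, contradicting the bound coming from $\G_1$; but since the goodness of such a $\G_2$ is not known in general (e.g. for non-arithmetic lattices), the bound on $\mathrm{cd}_p\widehat{\G}$ cannot be transported directly back to $\G_2$. The remedy is to extract finer data from $\widehat{\G}$ --- the Euler characteristic via finite-index surface or $3$-manifold subgroups, the $2$-dimensional Poincar\'e-duality structure of the surface profinite group, and the linear growth of $b_1$ in finite covers --- or to exhibit inside $\G_2$ a separable subgroup of cohomological dimension $\ge 3$ (a lower-dimensional totally geodesic piece) whose image in $\widehat{\G}$ retains that dimension; this is available in the arithmetic cases and requires care in general. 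The other technical point, the interaction between a nontrivial solvable radical and the semisimple quotient when $\Lambda$ is infinite, is handled by applying superrigidity to the action of $\bar\G$ on $\Lambda^{\rm ab}\otimes\Q$.
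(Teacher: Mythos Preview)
Your broad architecture matches the paper's: translate to $\widehat{\G}_1\cong\widehat{\G}_2$, reduce to the case where $\G_2$ is itself Fuchsian, then invoke the profinite rigidity of Fuchsian groups among themselves. The last step is indeed proved in the paper (via $b_1$ for the genus and the bijection $\conj(\G)\to\conj(\widehat{\G})$ for the periods), so deferring to it is legitimate. The divergence is entirely in how the reduction is carried out, and there your argument has a real gap that you correctly flag but do not close.

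The gap is the cocompact case for $\SO(n,1)$, $n\ge 4$, and $\SU(n,1)$, $n\ge 2$. You want to push $\cd_p\widehat{\G}\ge 3$ from the Poincar\'e duality of $\G_2$, but as you note, goodness of such $\G_2$ is unknown. Your list of remedies is heuristic: totally geodesic pieces need not exist non-arithmetically, and the ``$2$-dimensional PD structure of $\widehat{\G}$'' is not a tool that has been made to bite here. The one remedy on your list that \emph{does} work is ``linear growth of $b_1$ in finite covers'', but you do not develop it, and that is exactly what the paper uses. By L\"uck's approximation theorem, the quantity $\lim b_1(N)/[\G:N]$ over any cofinal normal tower equals $b_1^{(2)}(\G)$; since it is computed from data visible in $\widehat{\G}$, Corollary~\ref{c:sameL2} gives $b_1^{(2)}(\G_1)=b_1^{(2)}(\G_2)$. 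For a Fuchsian group $b_1^{(2)}(\G_1)=-\chi(\G_1)>0$, whereas for lattices in $\SO(n,1)$ with $n\ge 3$ or $\SU(n,1)$ with $n\ge 2$ one has $b_1^{(2)}=0$. More to the point, the paper does not do this case-by-case: it invokes Cheeger--Gromov (an infinite normal amenable subgroup forces $b_1^{(2)}=0$) together with Lott's lemma to conclude in one stroke that either $\G_2$ is already (isomorphic to) a lattice in $\PSL(2,\R)$ or $\widehat{\G}_2$ has a nontrivial finite normal subgroup; the latter is then excluded because $\widehat{\G}_1$ has none (Theorem~\ref{p:conjClasses} and Corollary~\ref{eliminatefinitenormal}). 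This bypasses goodness of $\G_2$ entirely.

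Two smaller points where your sketch would need tightening. First, a lattice in a connected Lie group need not be residually finite when $G$ is not linear; the paper handles this by passing through the adjoint representation and analysing the kernel $I$ of $\G_2\to\widehat{\G}_2$ against the centre, again using the Cheeger--Gromov vanishing to control the possibilities. Your structure-theory reduction implicitly assumes residual finiteness and a clean Mostow splitting. Second, your assertion that a non-cocompact lattice in $\SO(n,1)$ has Euler characteristic $0$ is false for even $n$ (Gauss--Bonnet), so that particular dichotomy does not work as stated; the $L^2$-Betti number argument is what actually carries the weight there too.
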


We remind the reader that a Fuchsian group is, by definition, an infinite 
 discrete subgroup of
$\PSL(2,{\R})$. To avoid trivial special cases, we shall assume that all of the groups considered
are non-elementary (that is,~are not virtually cyclic).
Thus, for us, every Fuchsian group $\G$ has a subgroup of finite index that maps onto a
non-abelian free group, and hence every finitely-generated group 
is a quotient of some finite-index subgroup of $\G$.  Deciding which groups 
 arise as quotients
of $\Gamma$ itself is a more subtle matter, but much progress has been made
on understanding the finite quotients, that is,~deciphering the structure of ${\cal C}(\Gamma)$; 
see \cite{Con}, \cite{Ev}, \cite{LS}
and \cite{Mac} and references therein.      

The structure of the set $\C(\Gamma)$ is intimately connected with the subgroup growth
of $\Gamma$, so it is interesting to contrast our results with what is known about the
subgroup growth of Fuchsian groups (see \cite{MSP} and
\cite{LS}). Let $s_n(\Gamma)$ denote the
number of subgroups in $\G$ that have index precisely $n$.  
In \cite{MSP}
an equivalence relation on the set of finitely-generated groups was introduced:
 $\Gamma_1$ and $\Gamma_2$ are declared to be equivalent if and only if 
$s_n(\Gamma_1) = (1 + o(1))s_n(\Gamma_2)$ asymptotically (that is, as $n\rightarrow \infty).$
Also in \cite{MSP}, M\"uller and Schlage-Puchta exhibited an infinite sequence of pairwise 
non-isomorphic Fuchsian
groups that are all equivalent in this sense.  Our Theorem~\ref{main1}
shows that these equivalent groups are distinguished by their finite quotients.

Here, and throughout this paper, we use the term `surface group' to mean 
a group that is isomorphic to the fundamental group of a closed surface of 
genus at least $1$.
A basic case in Theorem~\ref{main1} is the situation where $\Gamma_1$ is a free group 
and $\Gamma_2$ is a surface group. This is not a difficult case to handle, but we give several proofs,
each pointing to an argument that enables one to distinguish free groups by means of $\C(\Gamma)$ in
broader classes; Theorem \ref{t:others} records some of these classes and  broader, more technically
defined classes will be described in Section~\ref{s:FvS}. 
One proof relies on the observation that
in a surface group, every subgroup of finite index has even first betti number,
whereas any finitely-generated free group has a subgroup of finite index with
odd first betti number.  
A second proof relies on the observation that the number of
elements needed to generate a surface group is equal to the rank of its abelianisation.
 A third proof  relies on the fact that surface groups  
are {\em{good}} in the sense of Serre (see
Section~\ref{s:good}), LERF (locally extended residually finite), 
and of cohomological dimension 2 over a finite field.

\begin{theorem}\label{t:others}
 Let $\G_1$ be a free group, and let $\G_2$ be a finitely-generated group.  
Then $\C(\G_1)\neq \C(\G_2)$ if $\,\G_2$ satisfies one of the following conditions$\,:$
\\[-16pt]  
\begin{enumerate}
\item[{\rm (a)}] $\G_2$ is the fundamental group of a compact K\"ahler manifold$\,;$ \\[-18pt]  
\item[{\rm (b)}]  $\G_2$ is residually free and contains a surface subgroup$\,;$ \\[-18pt]  
\item[{\rm (c)}]  $\G_2$ is a non-free, right-angled Artin group.
\end{enumerate}
\end{theorem}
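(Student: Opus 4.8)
The plan is to suppose that $\C(\Gamma_1)=\C(\Gamma_2)$ and to derive a contradiction in each of the three cases; write $F=\Gamma_1$ (we may assume $F\neq 1$). Since all of the groups in play are finitely generated and residually finite, the hypothesis is equivalent to the existence of an isomorphism $\widehat{\Gamma_1}\cong\widehat{\Gamma_2}$ of profinite completions, and such an isomorphism induces a bijection between the finite-index subgroups of $\Gamma_1$ and those of $\Gamma_2$ which preserves the index and sends $H_1$ to an $H_2$ with $\widehat{H_1}\cong\widehat{H_2}$; in particular $H_1^{\ab}\otimes\widehat{\Z}\cong H_2^{\ab}\otimes\widehat{\Z}$, so corresponding finite-index subgroups have equal first Betti numbers. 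Two features of $\widehat F$ will be used. First, by Nielsen--Schreier every finite-index subgroup $H\le F$ is free of rank $1+[F:H](\rank F-1)$, so $F$ has finite-index subgroups of odd first Betti number (e.g.\ any index-$2$ subgroup, of rank $2\rank F-1$). Second, $\widehat F$ is a free profinite group, hence projective, so $\cd_p\widehat F\le 1$ for every prime $p$, and every closed subgroup of $\widehat F$ is again projective and has $\cd_p\le 1$.

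Case (a) is immediate from Hodge theory: if $\Gamma_2=\pi_1 X$ for a compact K\"ahler manifold $X$ then $b_1(\Gamma_2)$ is even, and since every finite-sheeted cover of $X$ is again a compact K\"ahler manifold, every finite-index subgroup of $\Gamma_2$ has even first Betti number. As $F$ has a finite-index subgroup with odd first Betti number and corresponding finite-index subgroups have the same first Betti number, no isomorphism $\widehat{\Gamma_1}\cong\widehat{\Gamma_2}$ can exist.

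Cases (b) and (c) I would treat through cohomological dimension. The point is that $\cd_p\widehat{\Gamma_2}\ge 2$ for some prime $p$ already forces $\widehat{\Gamma_2}\not\cong\widehat F$, so it suffices to exhibit a finite module $M$ with $H^2(\widehat{\Gamma_2};M)\ne 0$ (continuous cohomology). For (c): a non-free right-angled Artin group $A_L$ has an edge $\{a,b\}$ in $L$, and the subgroup $\langle a,b\rangle\cong\Z^2$ is a retract of $A_L$ (send every other generator to $1$); passing to profinite completions makes $(\widehat{\Z})^2$ a retract of $\widehat{A_L}$, so $H^2((\widehat{\Z})^2;\Bbb F_p)\cong\Bbb F_p$ is a direct summand of $H^2(\widehat{A_L};\Bbb F_p)$, whence $\cd_p\widehat{A_L}\ge2$. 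For (b): $\Gamma_2$ contains a surface subgroup $S$, and surface groups are good in the sense of Serre with $H^2(S;\Bbb F_p)\ne0$; so if the inclusion induces an injection $\widehat S\hookrightarrow\widehat{\Gamma_2}$ then $\widehat S$ is a closed subgroup of $\widehat F$, hence projective with $\cd_p\widehat S\le1$, contradicting goodness of $S$. (Equivalently, if $\Gamma_2$ is itself good, then $H^2(S;\Bbb F_p)\ne0$ gives $\cd_{\Bbb F_p}\Gamma_2\ge2$, so $H^2(\Gamma_2;N)\ne0$ for some finite $\Bbb F_p[\Gamma_2]$-module $N$, which is naturally a $\widehat{\Gamma_2}$-module, so $H^2(\widehat{\Gamma_2};N)\ne0$.)

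The main obstacle is therefore case (b): one must show either that $\widehat S\to\widehat{\Gamma_2}$ is injective --- equivalently that the full profinite topology of the surface subgroup $S$ is induced from $\Gamma_2$ --- or that the finitely-generated residually free group $\Gamma_2$ is good. I would approach this via the structure theory of finitely-generated residually free groups: by the work of Sela and of Kharlampovich--Myasnikov (with the refinements of Bridson--Howie--Miller--Short) such a group embeds in a finite direct product of limit groups, and by Wilton's theorem finitely-generated subgroups of limit groups are subgroup separable. Transferring control of the profinite topology from the factors to the subgroup $\Gamma_2$ of the product is the delicate step, and is presumably the point at which the additional hypothesis alluded to in the abstract enters; granting it, the contradiction follows as in the previous paragraph.
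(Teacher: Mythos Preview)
Your case (a) is correct and is exactly the paper's argument (Proposition~4.5 and Lemma~4.4): free groups have finite-index subgroups of odd first Betti number, while a compact K\"ahler manifold and all its finite covers have even first Betti number by Hodge theory. One caveat: K\"ahler groups need not be residually finite, so your opening sentence is slightly off, but the Betti-number comparison only uses the correspondence of Proposition~2.3 and Corollary~2.10, neither of which requires residual finiteness of $\Gamma_2$.

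For case (c) your argument is correct but takes a different route from the paper. You retract $A_L$ onto the visible $\Z^2$ spanned by an edge and deduce $\cd_p\widehat{A_L}\ge 2$. The paper instead uses a rank/Hopf argument (Corollary~4.3): the standard presentation shows $d(A_L)=d(A_L^{\ab})$, so if $\widehat{A_L}\cong\widehat{F_n}$ then $n\ge d(A_L)$, giving an epimorphism $F_n\to A_L$ that must be an isomorphism by residual finiteness. Your approach has the virtue of unifying (b) and (c) under a single cohomological obstruction; the paper's approach avoids profinite cohomology entirely and makes explicit the epimorphism one would have to produce.

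Case (b) has a genuine gap. You correctly isolate the crux---one needs $\widehat S\to\widehat{\Gamma_2}$ injective---but neither of your two proposed routes works as stated. Your fallback ``show $\Gamma_2$ is good'' fails outright: the paper exhibits (Remark~4.12) a finitely-generated residually free group that is \emph{not} good. And embedding $\Gamma_2$ in a product of limit groups and invoking Wilton's LERF theorem on the factors does not by itself control the profinite topology that $\Gamma_2$ induces on $S$. The missing ingredient is the Bridson--Wilton theorem \cite{BW}: a finitely-generated residually free group virtually retracts onto any subgroup that has a finite classifying space. Since a closed surface is a finite $K(S,1)$, some finite-index $\Gamma_0\le\Gamma_2$ retracts onto $S$; then $\widehat S$ is a retract, hence a closed subgroup, of $\widehat{\Gamma_0}\le\widehat{\Gamma_2}$, and your cohomological argument (Proposition~4.6) finishes. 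The ``additional hypothesis'' you were looking for is nothing beyond the clause already in part (b)---containing a surface subgroup---and \cite{BW} is precisely what turns that clause into the required injectivity.
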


In the preceding discussion, we have regarded finitely-generated free
groups as examples of Fuchsian groups, residually free groups,
right-angled Artin groups, and lattices in connected Lie groups. But
except in the case of Fuchsian groups, we have not addressed the
question of whether the operator $\C$ distinguishes the isomorphism types
within these classes. For general lattices, it certainly does not: it
has long been known that there exist lattices in distinct nilpotent
Lie groups that have the same collection of finite quotients
\cite{segal} Chapter 11, and Aka \cite{A1} and \cite{A2} recently
provided examples of such pairs of lattices in distinct semisimple Lie groups $G_1, G_2$. Indeed there
are examples where the real ranks of $G_1$ and $G_2$ are
different (for example, rank 2 and rank 10) but  nevertheless
there exist lattices $\G_j<G_j$ such that $\C(\G_1)=\C(\G_2)$.  There also
exist pairs of finitely-generated residually free groups
$\G_1\not\cong\G_2$ such that $\C(\G_1)=\C(\G_2)$; see Remark
\ref{r:PvFxF}.
In addition, in the context of 3-manifold groups, Funar \cite{Fu}
has shown the existence of non-homeomorphic
torus bundles over the circle whose fundamental groups have the same finite
quotients.

In contrast, there are reasons to suspect that for lattices in
$\PSL(2,\CC)$, and in particular the fundamental groups of finite-volume 
orientable hyperbolic 3-manifolds, the situation may be more
reminiscent of ${\rm{PSL}}(2,\R)$, with lattices being uniquely
determined by their finite quotients, both amongst themselves and
amongst lattices in arbitrary Lie groups.  In pursuit of this
conviction, we prove the following theorem in Section~\ref{s:psl2C}.
 
\begin{theorem}
\label{mainbianchi}
Let $\Gamma_1$ be a non-uniform lattice in $\PSL(2,{\CC})$, 
and let $\Gamma_2$ be a non-uniform irreducible
arithmetic lattice in a semisimple Lie group $G$ that has
trivial centre and no compact factors. If ${\cal C}(\Gamma_1) =
{\cal C}(\Gamma_2)$ then $G\cong \PSL(2,{\CC})$. Moreover, if $\G_1$
is arithmetic then the family of all $\G_2$ with ${\cal C}(\Gamma_1) =
{\cal C}(\Gamma_2)$ divides into
finitely many commensurability classes.
\end{theorem}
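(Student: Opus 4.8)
The plan is to work almost entirely at the level of profinite completions, since $\C(\G_1)=\C(\G_2)$ is equivalent to $\wh{\G_1}\cong\wh{\G_2}$ for finitely-generated groups. First I would fix $\Gamma_1$ a non-uniform lattice in $\PSL(2,\CC)$; passing to a torsion-free finite-index subgroup if necessary (and tracking the effect on commensurability classes at the end), we may take $\Gamma_1=\pi_1(M)$ for a finite-volume cusped hyperbolic $3$-manifold $M$. The key structural facts I would invoke about $\Gamma_1$ are: (i) $\Gamma_1$ has a finite-index subgroup with positive first Betti number, and more importantly (ii) $\cd(\Gamma_1)=2$, with $H^2(\Gamma_1;\Q/\Z)$ and the cohomology of finite-index subgroups constrained because $M$ has nonempty boundary — in particular $\Gamma_1$ and all its finite-index subgroups are of cohomological dimension $2$, and they are \emph{good} in the sense of Serre (Grunewald--Jaikin-Zapirain--Zalesskii and related work show non-uniform arithmetic lattices in $\PSL(2,\CC)$ are good). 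Goodness transfers $\cd$ and Euler-characteristic-type invariants to the profinite completion, so $\wh{\Gamma_1}$ has profinite cohomological dimension $2$. Since $\C(\G_2)=\C(\G_1)$, the group $\wh{\Gamma_2}$ also has profinite cohomological dimension $2$; and for a non-uniform irreducible arithmetic lattice $\Gamma_2$ in a semisimple $G$ with trivial centre and no compact factors, goodness (arithmetic groups over number fields are good by Grunewald--Jaikin-Zapirain--Zalesskii in the relevant cases, or one uses the known structure directly) forces $\cd(\Gamma_2)=2$. Now a non-uniform arithmetic lattice in a higher-rank group, or in a product, or in a rank-one group other than $\PSL(2,\CC)$ or $\PSL(2,\R)$, has virtual cohomological dimension strictly bigger than $2$ (compute $\operatorname{vcd}$ from the dimension of the symmetric space minus the $\Q$-rank; only $\operatorname{vcd}=2$ cases are $\PSL(2,\R)$, where the lattice is a non-uniform Fuchsian group, and $\PSL(2,\CC)$); and $\PSL(2,\R)$-lattices are ruled out because a non-uniform Fuchsian group is virtually free, hence its profinite completion is virtually a free profinite group and has lots of torsion-free finite-index subgroups that are free, whereas $\wh{\Gamma_1}$ does not (e.g.\ $\Gamma_1$ has a finite-index subgroup with $H^2\neq 0$, detectable profinitely by goodness). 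This gives $G\cong\PSL(2,\CC)$.

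For the second statement, assume additionally that $\Gamma_1$ is arithmetic, so $\Gamma_1$ is commensurable with a Bianchi group $\PSL(2,\mathcal{O}_d)$ or with a group of units in an order of a quaternion algebra over an imaginary quadratic field $k$. Having shown $\Gamma_2<\PSL(2,\CC)$ is a non-uniform arithmetic lattice, I would pin down the invariant trace field and quaternion algebra of $\Gamma_2$ from profinite data. The main tool here is that $\wh{\Gamma_1}\cong\wh{\Gamma_2}$ forces agreement of all the finite quotients, and in particular — via the congruence subgroup property, which holds for non-uniform arithmetic lattices in $\PSL(2,\CC)$ only in a weak form, so more care is needed — one extracts the \emph{adelic} data. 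More robustly: $\operatorname{vol}(M)$ up to commensurability, equivalently the covolume normalisation, is encoded because the profinite completion determines $\hat{\Q}/\Z$-valued Euler-characteristic-type invariants for all finite-index subgroups, and for arithmetic groups Euler characteristics are expressed (Harder's Gauss--Bonnet, or the Borel volume formula) in terms of zeta/$L$-values of the number field $k$ and the ramification of the quaternion algebra. Since these invariants take discrete values and are bounded once we fix the profinite completion, only finitely many pairs $(k,A)$ can occur, hence finitely many commensurability classes of $\Gamma_2$. An alternative route to finiteness, which I would run in parallel as a safety net, uses that the first Betti numbers of the congruence quotients of $\Gamma_2$ (visible in $\C(\Gamma_2)$) grow at a rate governed by the number field $k$ and by $[k:\Q]$, together with the fact that, by Borel--Prasad-type finiteness, there are only finitely many arithmetic lattices in $\PSL(2,\CC)$ of bounded covolume.

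The hard part will be the second paragraph: turning ``$\wh{\Gamma_1}\cong\wh{\Gamma_2}$'' into genuine arithmetic constraints on $(k,A)$. The issue is that for \emph{uniform} arithmetic lattices one has the full congruence subgroup property and can literally recover the congruence completion $\PSL(2,\hat{\mathcal O})$ and hence $k$ and $A$ from the profinite completion; but non-uniform lattices in $\PSL(2,\CC)$ do \emph{not} have CSP (they have non-congruence subgroups galore), so one cannot directly read off the adelic group. The workaround I anticipate is to avoid reconstructing $k$ on the nose and instead to bound the relevant invariants: show that the covolume of $\Gamma_2$ (or the covolume of a canonical congruence core) is determined up to finitely many possibilities by the profinite completion — using that covolume appears in the Euler characteristic / $L^2$-Betti-number formulas and that these profinite-invariant quantities are rational with bounded denominators — and then invoke Borel's finiteness theorem (finitely many arithmetic lattices of bounded covolume) to conclude. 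I would expect the cleanest writeup to route everything through Euler characteristics of finite-index subgroups, goodness to move these to the profinite world, and Gauss--Bonnet/Borel to convert them back into arithmetic; reconciling the non-CSP obstruction with the need for arithmetic rigidity is where the real work lies.
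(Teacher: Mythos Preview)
Your proposal has two genuine gaps, one in each part of the argument.

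\textbf{First part (identifying $G$).} You assume that the non-uniform arithmetic lattice $\Gamma_2$ is good in the sense of Serre, so that $\cd_p(\widehat{\Gamma_2})=2$ can be pulled back to $\cd(\Gamma_2)=2$ and hence to a vcd calculation. Goodness is \emph{not} known for arbitrary arithmetic lattices; Grunewald--Jaikin-Zapirain--Zalesskii establish it for Bianchi groups and a few related cases, not for lattices in $\SO_0(n,1)$, $\SU(n,1)$, or higher-rank groups in general. Without goodness of $\Gamma_2$ there is no way to pass from $\cd_p(\widehat{\Gamma_2})\le 2$ back to information about $\cd(\Gamma_2)$. The paper avoids this entirely: higher rank is eliminated using $b_1(\Gamma_1)>0$ together with the Margulis normal subgroup theorem (irreducible higher-rank lattices have finite abelianisation), and the property~(T) rank-one groups $\mathrm{Sp}(n,1)$ and $F_4^{(-20)}$ are eliminated the same way. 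For the remaining cases $\SO_0(n,1)$ ($n\ge 4$) and $\SU(m,1)$ ($m\ge 2$) the paper does not use goodness of $\Gamma_2$ at all; instead it uses a \emph{peripheral} subgroup $D<\Gamma_2$ (virtually $\Z^{n-1}$ or Heisenberg), which is nilpotent hence good, and which is separable together with all its finite-index subgroups by McReynolds, so that $\widehat D\hookrightarrow\widehat{\Gamma_2}$. Since $D$ is a closed aspherical $(\ge 3)$-manifold group, $H^q(D;\mathbf F_p)\ne 0$ for some $q\ge 3$, contradicting $\cd_p(\widehat{\Gamma_1})\le 2$ via Corollary~\ref{cdboundapply}. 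This targeted use of a good \emph{subgroup} is the missing idea.

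\textbf{Second part (finiteness of commensurability classes).} Your proposed route through Euler characteristics, $L^2$-Betti numbers, and Borel--Prasad finiteness does not get traction here: finite-volume hyperbolic $3$-manifolds are odd-dimensional, so $\chi=0$ identically, and all their $L^2$-Betti numbers vanish; there is no ``Euler-characteristic-type'' quantity to read off. Attempting to recover the covolume or the pair $(k,A)$ directly from $\widehat{\Gamma_2}$ runs straight into the failure of CSP that you yourself flag, and there is no known workaround. The paper's argument is much simpler and uses a different invariant: $b_1(\Gamma_2)=b_1(\Gamma_1)=:n$ is determined by the profinite completion, and for a cusped hyperbolic $3$-manifold the number of cusps is at most $b_1$. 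Then one invokes the theorem of Chinburg--Long--Reid (Theorem~\ref{boundthenumberofcusps}) that only finitely many commensurability classes of non-uniform arithmetic Kleinian groups contain a lattice whose quotient has at most $n$ cusps. The key profinite invariant here is the bound on the number of cusps via $b_1$, not covolume or $\chi$.
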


It is interesting to compare the conclusion of Theorem~\ref{mainbianchi}
with Pickel's Theorem \cite{Pi} and recent work by Aka \cite{A2}. 
In the latter it is shown that among higher rank lattices which have the Congruence
Subgroup Property, there are only finitely many 
that can have the same set of finite quotients, whilst in the former it is shown 
that only finitely many finitely-generated nilpotent
groups can have the same set of finite quotients. 

\medskip

We have deliberately stated our results in terms of the naively defined set
$\C(\G)$, but it is both natural and useful to regard them as statements about
the {\em profinite completions} of the groups in question. We remind the reader that
the profinite completion $\widehat{\Gamma}$ of a group $\G$ is the inverse limit of the finite quotients
$\G/N$ of $\G$. (The maps in the inverse system are the obvious ones: if $N_1 <N_2$
then $\G/N_1\rightarrow \G/N_2$.) If $\G$ is finitely-generated,
then $\C(\G)=\C(\widehat{\Gamma})$; 
thus $\wh{\G}_1\cong\wh{\G}_2$ implies $\C({\G_1})=\C({\G_2})$. 
Less obviously,
$\C(\G)$ uniquely determines $\widehat{\Gamma}$  (see \cite{DFPR}, 
\cite{RZ} p.89 and Section 3 below). Thus
we may rephrase all of the preceding results in terms of profinite completions. For example: 

\begin{theorem}
\label{profinite}
Let $\Gamma_1$ and $\Gamma_2$ be as in  Theorem~{\em\ref{main1}}  
or~{\em\ref{t:others}}.  If $\widehat{\Gamma}_1 \cong \widehat{\Gamma}_2$
then  $\Gamma_1\cong \Gamma_2$.\end{theorem}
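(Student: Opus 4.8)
The plan is to recognise this as a restatement of Theorems~\ref{main1} and~\ref{t:others}, mediated by the elementary dictionary between profinite completions and sets of finite quotients. For any finitely-generated group $\G$ one has $\C(\G)=\C(\widehat{\G})$: every finite quotient of $\G$ factors through $\widehat{\G}$ by the universal property of the completion, and conversely every finite (continuous) quotient of $\widehat{\G}$ restricts along the dense inclusion $\G\hookrightarrow\widehat{\G}$ to a finite quotient of $\G$ with the same image. (For finitely-generated $\G$ there is in any case no ambiguity in the phrase ``finite quotient of $\widehat{\G}$'', since by the theorem of Nikolov and Segal every finite-index subgroup of $\widehat{\G}$ is open; but only the trivial direction just indicated is needed below.) Consequently $\widehat{\Gamma}_1\cong\widehat{\Gamma}_2$ forces $\C(\Gamma_1)=\C(\widehat{\Gamma}_1)=\C(\widehat{\Gamma}_2)=\C(\Gamma_2)$.

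First I would note that all the groups occurring in Theorems~\ref{main1} and~\ref{t:others} are finitely generated -- this is explicit in the hypotheses, and lattices in connected Lie groups are finitely generated as well -- so the chain of equalities above is available and the hypotheses of those theorems are satisfied by the pair $(\Gamma_1,\Gamma_2)$. Then I would simply apply whichever of Theorems~\ref{main1} and~\ref{t:others} covers that pair to conclude $\Gamma_1\cong\Gamma_2$.

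I do not expect a real obstacle. The only delicate part of the general correspondence between groups and profinite completions -- that $\C(\G)$ in turn determines $\widehat{\G}$, cf.~\cite{DFPR} and \cite{RZ}, p.~89 -- plays no role here, because we use only the obvious implication that isomorphic profinite completions have identical sets of finite quotients. Should one wish to avoid even the identity $\C(\G)=\C(\widehat{\G})$, one could instead transcribe the proofs of Theorems~\ref{main1} and~\ref{t:others} into profinite terms directly: each of those proofs distinguishes $\Gamma_2$ from a free or Fuchsian group by means of an invariant -- the parity of the first Betti numbers of finite-index subgroups, the minimal number of generators against the rank of the abelianisation, cohomological dimension over $\Bbb F_p$, goodness in the sense of Serre -- each of which is manifestly determined by the profinite completion, since finite-index subgroups of $\G$ correspond to open subgroups of $\widehat{\G}$ and $H^{\ast}(\widehat{\G};\Bbb F_p)$ is read off from the finite quotients. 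Either way, Theorem~\ref{profinite} follows from results already established.
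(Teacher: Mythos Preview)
Your proposal is correct and is exactly the paper's approach: Theorem~\ref{profinite} is presented there merely as a restatement of Theorems~\ref{main1} and~\ref{t:others} via the observation that $\widehat{\Gamma}_1\cong\widehat{\Gamma}_2$ implies $\C(\Gamma_1)=\C(\Gamma_2)$ for finitely-generated groups. One tiny point of phrasing: for the pairs covered by Theorem~\ref{t:others} the conclusion of that theorem is $\C(\Gamma_1)\neq\C(\Gamma_2)$, so in that case you do not literally ``conclude $\Gamma_1\cong\Gamma_2$'' but rather that the hypothesis $\widehat{\Gamma}_1\cong\widehat{\Gamma}_2$ cannot hold, making the implication vacuous---the logic is of course still sound.
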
 

Our invocation of profinite groups is more than a matter of terminology: 
in particular, our proofs of Theorems~\ref{main1} and \ref{mainbianchi} rely
on various aspects of the theory of profinite groups, and the interplay
between the abstract group and its profinite completion. Certain of
the arguments also rely on deep properties of the lattices concerned.
 
Our detours through the theory of profinite groups are such that, for the most part,
we prove that groups have distinct profinite completions {\em without exhibiting specific
finite quotients that distinguish them}. Indeed, even in relatively small examples, identifying
such finite quotients appears to be rather delicate. 
In the final section of this paper we tackle this problem head-on in the case of triangle groups,
where it succumbs to a direct analysis. 
\medskip

We close this introduction with some comments concerning logic and decidability.
First, note  that whenever one
has a class of finitely-presented residually finite groups where $\widehat{\Gamma}_1
\cong \widehat{\Gamma}_2$ implies $\G_1\cong\G_2$, one 
obtains a solution to the isomorphism 
problem in that class of groups. Indeed, since each group is finitely-presented, one
can effectively enumerate its finite quotients, and in this way one will prove in a finite 
number of steps that $\C(\G_1)\neq\C(\G_2)$ if this is the case; and running this partial
algorithm in parallel with a naive search for mutually-inverse homomorphisms $\G_1\leftrightarrow\G_2$ 
will (in finite time) determine whether or not $\G_1$ is isomorphic to $\G_2$.
Similarly, Theorem~\ref{main1} implies that there exists an algorithm that, given a finite presentation
of a lattice in a connected Lie group, will determine whether or not the group presented is
Fuchsian. And Theorem~\ref{mainbianchi} provides an algorithm for determining if a finite presentation
of a non-uniform arithmetic lattice is Kleinian.

A recent result of Jarden and Lubotzky \cite{JL}, which relies on deep
work of Nikolov and Segal \cite{NS}, shows that there exists an isomorphism
$\widehat{\Gamma}_1\cong\widehat{\Gamma}_2$ if and only if there is an
equivalence of the elementary theories ${\rm{Th}}(\widehat\G_1)\equiv
{\rm{Th}}(\widehat\G_2)$ (in the sense of first order logic). So our results
can also be interpreted as criteria for determining if such elementary theories are
distinct. This is particularly striking in the light of the solution
to Tarski's problem by Sela \cite{sela} and Kharlampovich-Myasnikov
\cite{KM}, which implies that the fundamental group of any closed
surface of Euler characteristic at most $-2$ has the same elementary
theory as a free group of any finite rank $r\ge 2$.  For clarity, we
highlight a special case of this discussion, but stress that we have
contributed only a small part of the proof.

\begin{theorem} If $\G_1$ and $\G_2$ are any two non-isomorphic, non-elementary,
 torsion-free Fuchsian groups, then 
${\rm{Th}}(\G_1)\equiv {\rm{Th}}(\G_2)$ but  ${\rm{Th}}(\widehat{\Gamma}_1)\not\equiv {\rm{Th}}(\widehat{\Gamma}_2)$.
\end{theorem}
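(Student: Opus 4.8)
The plan is to assemble the two halves of the statement from results that are either cited in the paper or fall under Theorem~\ref{main1}. For the elementary-equivalence half, ${\rm{Th}}(\G_1)\equiv{\rm{Th}}(\G_2)$, I would invoke the solution to Tarski's problem by Sela~\cite{sela} and Kharlampovich--Myasnikov~\cite{KM}: a non-elementary torsion-free Fuchsian group is precisely the fundamental group of a surface (possibly with boundary) of negative Euler characteristic, hence is either a non-abelian free group of finite rank or the fundamental group of a closed surface of genus $g\ge 2$. The Tarski theorem says that all non-abelian free groups of finite rank share a common elementary theory, and the subsequent work of Sela on the elementary theory of hyperbolic groups (in particular of surface groups) shows that the fundamental group of a closed surface of Euler characteristic at most $-2$ has this same elementary theory. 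Since $\chi\le -2$ is automatic once $g\ge 2$, and since any free surface group with boundary of negative Euler characteristic is a non-abelian free group, every non-elementary torsion-free Fuchsian group has the theory of a non-abelian free group of finite rank; in particular all of them are elementarily equivalent to one another. (One should note the excluded cases: a once-punctured torus group and a thrice-punctured sphere group are free of rank $2$, so they too fall under the free-group part; there is no torsion-free Fuchsian group of Euler characteristic $-1$ that is a closed surface group.)

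For the second half, ${\rm{Th}}(\widehat{\G}_1)\not\equiv{\rm{Th}}(\widehat{\G}_2)$, I would combine Theorem~\ref{main1} with the Jarden--Lubotzky result~\cite{JL} quoted in the introduction. Since $\G_1$ and $\G_2$ are both (in particular) finitely-generated Fuchsian groups, Theorem~\ref{main1} applies with $\Gamma_2$ regarded as a lattice in the connected Lie group $\PSL(2,\R)$: if $\C(\G_1)=\C(\G_2)$ then $\G_1\cong\G_2$. Taking the contrapositive, $\G_1\not\cong\G_2$ forces $\C(\G_1)\neq\C(\G_2)$, and since these groups are finitely-generated this is equivalent to $\widehat{\G}_1\not\cong\widehat{\G}_2$ (using that $\C(\G)=\C(\widehat\G)$ determines $\widehat\G$, as recalled in the introduction). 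Finally, by Jarden--Lubotzky, for finitely-generated profinite groups an isomorphism $\widehat{\G}_1\cong\widehat{\G}_2$ is equivalent to the equivalence of elementary theories ${\rm{Th}}(\widehat\G_1)\equiv{\rm{Th}}(\widehat\G_2)$; hence $\widehat{\G}_1\not\cong\widehat{\G}_2$ gives ${\rm{Th}}(\widehat\G_1)\not\equiv{\rm{Th}}(\widehat\G_2)$.

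There is essentially no obstacle in the second half beyond correctly citing the inputs; the only point that requires a moment's care is the first half, namely verifying that \emph{every} non-elementary torsion-free Fuchsian group really is covered by the Sela / Kharlampovich--Myasnikov results. The subtlety is that the cleanest statement of ``surface groups are elementarily equivalent to free groups'' is usually made for closed surfaces, and one must check that the Fuchsian groups with boundary (the free cases) and the small closed cases are handled: the free Fuchsian groups are non-abelian free of finite rank and so are covered directly by the Tarski theorem, and for closed surface Fuchsian groups torsion-freeness together with non-elementariness forces genus $g\ge 2$, hence $\chi=2-2g\le -2$, which is exactly the hypothesis under which Sela's theorem applies. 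So the main ``work'' is bookkeeping over the classification of torsion-free Fuchsian groups rather than any new argument; everything genuinely nontrivial has already been established, either by the cited authors or in Theorem~\ref{main1} above.
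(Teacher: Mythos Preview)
Your proposal is correct and follows essentially the same line as the paper's own discussion: the first half is deduced from the Sela/Kharlampovich--Myasnikov solution to Tarski's problem (together with the classification of torsion-free non-elementary Fuchsian groups as non-abelian free groups or closed orientable surface groups of genus $\ge 2$), and the second half is obtained by combining Theorem~\ref{main1} with the Jarden--Lubotzky equivalence between $\widehat{\G}_1\cong\widehat{\G}_2$ and ${\rm Th}(\widehat{\G}_1)\equiv{\rm Th}(\widehat{\G}_2)$. Your extra bookkeeping verifying that no closed-surface Fuchsian group of Euler characteristic $-1$ arises is a helpful sanity check but not an additional idea beyond what the paper sketches.
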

   
We conclude this introduction with a brief outline of the paper. In Section 2 we recall some
basic properties of the profinite completion of a (discrete)
residually finite group $\G$, the correspondence between subgroups of
$\G$ and that of its profinite completion, as well as the relationship
between the profinite completion of subgroups and their closures in
the profinite completion of $\G$. In addition, we recall some of the
theory of the cohomology of profinite groups, including Serre's notion
of goodness and cohomological dimension, and we discuss
actions of profinite groups on profinite trees. In Section 3 we describe
a connection (via L\"uck's Approximation Theorem) between groups with
isomorphic profinite completions and the first $L^2$-Betti number. In
Section 4 we prove a variety of results that restrict the class of groups that
can have the same profinite completion as a free group, and in Section 5 we
prove a result controlling torsion in the profinite completion of a
finitely-generated Fuchsian group. The proof of Theorem~\ref{main1} is
completed in Section 6, by putting together results proved in Section 2, 3 and 5. 
In Section 7 we prove Theorem~\ref{mainbianchi}, relying heavily on Section 2.  
In Section 8 we explain how to distinguish triangle groups more directly, 
by describing exhibit explicit finite quotients that do so.
\\[\baselineskip]
\noindent{\bf Acknowledgements:}~{\em We thank Gopal Prasad and Alex Lubotzky for 
helpful comments concerning Theorem 3.6. The first and third authors
thank the University of Auckland, Massey University and The New
Zealand Institute for Advanced Study for supporting the visit during which
this collaboration began. They particularly thank Gaven Martin for his
hospitality during that visit. They also thank the Mittag-Leffler Institute for its hospitality
during the final drafting of this paper.}

\section{Profinite Completions}
 
In this section we recall some background on profinite completions and
the theory of profinite groups; see \cite{RZ}, \cite{segal} and \cite{Se}
for more details. 

\subsection{Residually finite groups and their completions} 

By definition, the {\em profinite completion} $\widehat{\Gamma}$ of a finitely-generated group $\G$ is the
inverse limit of the system of finite quotients of $\G$. The natural
map $\G\to\widehat{\Gamma}$ is injective if and only if $\G$ is residually finite.
By definition $\widehat{\Gamma}$ maps onto every finite quotient of $\G$. On the
other hand, 
the image of $\G$ is dense regardless of whether $\G$ is residually
finite, so the restriction to $\G$ of any continuous homomorphism from $\widehat{\Gamma}$
to a discrete group (such as a finite group) is surjective. A deep theorem of Nikolov and Segal
\cite{NS} states that every homomorphism from $\widehat \G$ 
to a finite group is continuous,
so we have the following basic result (in which $\Hom(G,Q)$ denotes the set of 
homomorphisms from the group $G$ to the group $Q$, and $\Epi(G,Q)$ denotes the set of epimorphisms).

\begin{lemma}\label{NS} Let $\G$ be a finitely-generated group and let  $\iota:\G\to\widehat{\Gamma}$
be the natural map to its profinite completion. Then, for every finite group $Q$,
the map $\Hom(\widehat{\Gamma},Q)\to \Hom(\G,Q)$ defined by $g\mapsto g\circ\iota$ is a bijection,
and this restricts to a bijection $\Epi(\widehat{\Gamma},Q)\to \Epi(\G,Q)$. 
\end{lemma}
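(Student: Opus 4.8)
The plan is to prove Lemma~\ref{NS} by combining the universal property of the profinite completion with the density of $\G$ in $\widehat{\Gamma}$ and the theorem of Nikolov--Segal. First I would fix a finite group $Q$ and construct the map in the other direction. Given a homomorphism $\varphi:\G\to Q$, the kernel $\ker\varphi$ is a finite-index normal subgroup of $\G$, so $\G/\ker\varphi$ is one of the finite quotients appearing in the inverse system defining $\widehat{\Gamma}$; hence there is a canonical projection $\pi_\varphi:\widehat{\Gamma}\to\G/\ker\varphi$, and composing with the induced injection $\G/\ker\varphi\hookrightarrow Q$ (or, more cleanly, using the universal property of $\widehat{\Gamma}$ directly) yields a continuous homomorphism $\widehat\varphi:\widehat{\Gamma}\to Q$ with $\widehat\varphi\circ\iota=\varphi$. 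This shows the restriction map $\mathrm{Hom}_{\mathrm{cont}}(\widehat{\Gamma},Q)\to\mathrm{Hom}(\G,Q)$ is surjective.

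Next I would establish injectivity of the restriction map on \emph{all} homomorphisms $\widehat{\Gamma}\to Q$, not merely the continuous ones. For injectivity, suppose $g_1,g_2:\widehat{\Gamma}\to Q$ agree on $\iota(\G)$. The set $\{x\in\widehat{\Gamma} : g_1(x)=g_2(x)\}$ is a subgroup of $\widehat{\Gamma}$; when $g_1,g_2$ are continuous it is also closed (being the preimage of the diagonal under the continuous map $(g_1,g_2):\widehat{\Gamma}\to Q\times Q$, with $Q$ discrete hence Hausdorff), and since it contains the dense subset $\iota(\G)$ it is all of $\widehat{\Gamma}$. This already gives the bijection $\mathrm{Hom}_{\mathrm{cont}}(\widehat{\Gamma},Q)\to\mathrm{Hom}(\G,Q)$. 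To upgrade to the stated bijection $\mathrm{Hom}(\widehat{\Gamma},Q)\to\mathrm{Hom}(\G,Q)$ I invoke the Nikolov--Segal theorem cited in the excerpt: since $\G$ is finitely generated, $\widehat{\Gamma}$ is topologically finitely generated, so every abstract homomorphism from $\widehat{\Gamma}$ to the finite group $Q$ is automatically continuous, and the two Hom-sets coincide.

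Finally I would check that the bijection restricts to one between the epimorphism sets. If $\widehat\varphi:\widehat{\Gamma}\to Q$ is surjective then so is its restriction to the dense subgroup $\iota(\G)$ — indeed any continuous homomorphism to a finite (discrete) group has image equal to the closure of the image of a dense subset, so $\varphi=\widehat\varphi\circ\iota$ is onto $Q$. Conversely, if $\varphi:\G\to Q$ is onto, then the $\widehat\varphi$ constructed above has image containing $\varphi(\G)=Q$, so it is onto as well. Hence the restriction and extension maps carry epimorphisms to epimorphisms and the sub-bijection $\mathrm{Epi}(\widehat{\Gamma},Q)\to\mathrm{Epi}(\G,Q)$ follows.

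I do not anticipate a genuine obstacle here: the statement is essentially a packaging of standard facts. The only point requiring care — and the one place where a nontrivial input enters — is the claim that \emph{every} homomorphism $\widehat{\Gamma}\to Q$ (not just the continuous ones) is accounted for; this is exactly where the deep Nikolov--Segal result is needed, and without it one would only obtain the statement for continuous homomorphisms. Everything else is the universal property of the completion plus the density of $\G$ and the Hausdorff/discrete topology on $Q$.
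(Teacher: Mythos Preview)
Your proof is correct and follows essentially the same approach as the paper. The paper does not give a separate proof of this lemma; instead it sketches the argument in the paragraph immediately preceding the statement (universal property for surjectivity, density of $\iota(\G)$ for the restriction, and Nikolov--Segal to pass from continuous to arbitrary homomorphisms), and you have simply filled in the details that the paper leaves implicit, in particular the equalizer argument for injectivity and the two directions of the $\Epi$ claim.
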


\begin{corollary}\label{c:sameHom} If $\G_1$ is finitely-generated and 
$\widehat{\Gamma}_1\cong\widehat{\Gamma}_2$,  then $|\Hom(\G_1,Q)|=|\Hom(\G_2,Q)|$ for every finite group $Q$.
\end{corollary}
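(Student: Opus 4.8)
The plan is to read this off from Lemma~\ref{NS} together with one observation about topological finite generation. First I would apply Lemma~\ref{NS} to $\G_1$: for every finite group $Q$ the map $\Hom(\widehat{\Gamma}_1,Q)\to\Hom(\G_1,Q)$, $g\mapsto g\circ\iota_1$, is a bijection, and hence $|\Hom(\G_1,Q)|=|\Hom(\widehat{\Gamma}_1,Q)|$. Next, a group isomorphism $\widehat{\Gamma}_1\cong\widehat{\Gamma}_2$ induces, by precomposition, a bijection from $\Hom(\widehat{\Gamma}_2,Q)$ onto $\Hom(\widehat{\Gamma}_1,Q)$, so that $|\Hom(\widehat{\Gamma}_1,Q)|=|\Hom(\widehat{\Gamma}_2,Q)|$. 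It therefore remains only to prove $|\Hom(\widehat{\Gamma}_2,Q)|=|\Hom(\G_2,Q)|$, after which the corollary follows by concatenating the three equalities.

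The one point that needs care is that $\G_2$ is not assumed to be finitely generated, so Lemma~\ref{NS} cannot be invoked for $\G_2$ verbatim. I would deal with this by observing that the proof of that lemma uses only two facts about the group in question: that the canonical image of $\G_2$ is dense in $\widehat{\Gamma}_2$, which holds for the profinite completion of any group, and that every homomorphism from $\widehat{\Gamma}_2$ to a finite group is continuous, which is the Nikolov--Segal theorem and needs only that $\widehat{\Gamma}_2$ be topologically finitely generated. That last hypothesis is transported across the isomorphism: since $\G_1$ is finitely generated and has dense image in $\widehat{\Gamma}_1$, the profinite group $\widehat{\Gamma}_1$ is topologically finitely generated, hence so is $\widehat{\Gamma}_2\cong\widehat{\Gamma}_1$. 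With these two facts in place, the argument of Lemma~\ref{NS} applies unchanged: the precomposition map $\Hom(\widehat{\Gamma}_2,Q)\to\Hom(\G_2,Q)$ is injective because two continuous homomorphisms that agree on a dense subgroup coincide, and surjective because any homomorphism $\G_2\to Q$ has finite image and so factors through a finite quotient of $\G_2$, hence through $\widehat{\Gamma}_2$.

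I do not expect a genuine obstacle here; the corollary is essentially immediate from Lemma~\ref{NS}. The only thing to be alert to is the asymmetry in the hypotheses — only $\G_1$, and not $\G_2$, is assumed finitely generated — which is precisely why one should phrase the Nikolov--Segal step in terms of topological finite generation of the completion, a property that survives the isomorphism, rather than in terms of finite generation of the discrete group. (The same argument, applied to $\Epi$ in place of $\Hom$ as in Lemma~\ref{NS}, gives $|\Epi(\G_1,Q)|=|\Epi(\G_2,Q)|$ as well, which is what is actually used later.)
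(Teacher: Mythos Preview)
Your proof is correct and follows the route the paper intends: the corollary is stated there without proof, as an immediate consequence of Lemma~\ref{NS}. You have in fact been more careful than the paper, correctly flagging that only $\G_1$ is assumed finitely generated and explaining why the Nikolov--Segal step still applies on the $\G_2$ side via topological finite generation of $\widehat{\Gamma}_2\cong\widehat{\Gamma}_1$.
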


Closely related to this we have the following proposition (which is taken from \cite[Chapter 3.2]{RZ},
using the Nikolov-Segal theorem to replace `open' 
by `finite-index'). \\[\baselineskip]
\noindent{\bf{Notation.}}  Given a subset $X$ of a pro-finite group
$G$, we write $\overline X$ to denote the closure of $X$ in $G$.

\begin{proposition}
\label{correspondence}
With $\Gamma$ as above, there is a one-to-one correspondence between the
set ${\cal X}$ of finite-index subgroups of $\Gamma$ 
and the set ${\cal Y}$ of all finite-index subgroups
of $\widehat{\Gamma}$.  If $\G$ is identified with its image in $\widehat{\Gamma}$,
then this bijection takes $H\in {\cal X}$ to $\overline{H}$, 
and conversely, takes $Y\in {\cal Y}$ to $Y \cap \Gamma$.
Also $|\Gamma : H| = |\widehat{\Gamma}: \overline{H}|$. 
Moreover, $\overline H$ is normal in $\widehat\G$ if and only if $H$ is normal in $\G$, 
in which case $\G/H\cong\wh{\G}/\overline H$.
\end{proposition}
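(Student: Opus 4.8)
The plan is to prove Proposition~\ref{correspondence} by transporting the classical open-subgroup correspondence for profinite groups (as in \cite[Chapter 3.2]{RZ}) across the Nikolov--Segal theorem, which guarantees that every finite-index subgroup of $\widehat{\Gamma}$ is open. First I would fix the notation: identify $\Gamma$ with its dense image in $\widehat{\Gamma}$ under $\iota$, and define the two maps $\Phi\colon \mathcal{X}\to\mathcal{Y}$, $H\mapsto \overline{H}$, and $\Psi\colon\mathcal{Y}\to\mathcal{X}$, $Y\mapsto Y\cap\Gamma$. The first thing to check is that these are well-defined: if $H\leq\Gamma$ has finite index, I would choose a normal finite-index subgroup $K\trianglelefteq\Gamma$ with $K\leq H$, observe that the composite $\widehat{\Gamma}\to\Gamma/K$ (which exists because $\Gamma/K$ is a finite quotient of $\Gamma$) has open kernel $\overline{K}$ with $\widehat{\Gamma}/\overline{K}\cong\Gamma/K$, and then $\overline H$ is a union of cosets of $\overline K$, hence open and of finite index; conversely $Y\cap\Gamma$ has finite index in $\Gamma$ whenever $Y$ has finite index (equivalently, by Nikolov--Segal, is open) in $\widehat{\Gamma}$, since $\Gamma$ surjects onto the finite set $\widehat{\Gamma}/Y$ via restriction of the quotient map.

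Next I would establish that $\Psi\circ\Phi=\mathrm{id}_{\mathcal{X}}$ and $\Phi\circ\Psi=\mathrm{id}_{\mathcal{Y}}$. For the first, given $H\in\mathcal{X}$ one always has $H\subseteq\overline H\cap\Gamma$; for the reverse inclusion, pick $K\trianglelefteq\Gamma$ finite-index with $K\leq H$, note $H/K$ is the preimage of itself under $\Gamma\to\Gamma/K\cong\widehat{\Gamma}/\overline K$, and since $\overline H=$ (preimage in $\widehat{\Gamma}$ of $H/K\subseteq\widehat{\Gamma}/\overline K$) it follows that $\overline H\cap\Gamma$ maps into $H/K$, i.e.\ $\overline H\cap\Gamma\subseteq HK=H$. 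For $\Phi\circ\Psi=\mathrm{id}$, given an open $Y\leq\widehat{\Gamma}$ I would use density of $\Gamma$: $Y$ is open hence closed, so $\overline{Y\cap\Gamma}\subseteq Y$, and the reverse inclusion $Y\subseteq\overline{Y\cap\Gamma}$ follows because $Y\cap\Gamma$ is dense in $Y$ (any open subset of $\widehat{\Gamma}$ meeting $Y$ meets $\Gamma$ in a point of $Y\cap\Gamma$, using that $\Gamma$ is dense and $Y$ is open). The index equality $|\Gamma:H|=|\widehat{\Gamma}:\overline H|$ then drops out by counting cosets modulo the common normal subgroup $\overline K$, since $\Gamma/K\cong\widehat{\Gamma}/\overline K$ and $H/K\leftrightarrow\overline H/\overline K$ under this isomorphism.

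Finally, for the statement about normality: if $H\trianglelefteq\Gamma$, then $\overline H$ is normalised by $\Gamma$ (conjugation is continuous and $\Gamma$ normalises $H\subseteq\overline H$, so $g\overline Hg^{-1}=\overline{gHg^{-1}}=\overline H$ for $g\in\Gamma$), and since $\Gamma$ is dense and $\overline H$ is closed, $\overline H$ is normalised by all of $\widehat{\Gamma}$; the isomorphism $\Gamma/H\cong\widehat{\Gamma}/\overline H$ is then the one already produced above (it is nothing but the factorisation of $\widehat{\Gamma}\to\Gamma/H$ through its kernel $\overline H$, whose surjectivity on $\Gamma/H$ uses density). Conversely, if $\overline H\trianglelefteq\widehat{\Gamma}$ then $H=\overline H\cap\Gamma$ is normal in $\Gamma$ as the intersection of a normal subgroup with $\Gamma$. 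I expect the only genuinely subtle point --- and the one place a careless argument could go wrong --- is the appeal to Nikolov--Segal: without it, the correspondence is only between \emph{open} subgroups of $\widehat{\Gamma}$ and finite-index subgroups of $\Gamma$, and one must know that finite-index implies open in $\widehat{\Gamma}$ (equivalently, that $\widehat{\Gamma}$ is strongly complete) to get the clean bijection as stated; everything else is a routine density-and-continuity argument, with the choice of an auxiliary normal $K\leq H$ being the standard trick that reduces each claim to the finite quotient $\Gamma/K\cong\widehat{\Gamma}/\overline K$.
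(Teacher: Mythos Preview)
Your proof is correct and follows exactly the approach the paper indicates: the paper does not actually prove this proposition but simply cites it from \cite[Chapter~3.2]{RZ}, noting that the Nikolov--Segal theorem allows one to replace ``open'' by ``finite-index''. You have faithfully unpacked that reference, and your identification of the one delicate point --- that Nikolov--Segal is needed to pass from finite-index to open in $\widehat{\Gamma}$ --- is precisely the comment the paper makes.
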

 
\begin{corollary}\label{c:exhaust} Let $\G$ be a finitely-generated group, and for each $d\in\Bbb N$, let
$M_d$ denote the intersection of all normal subgroups of index at most $d$ in $\G$. Then the 
closure $\overline M_d$ of $M_d$ in $\widehat \G$ is the intersection of all normal subgroups
of index at most $d$ in $\widehat{\Gamma}$, and hence $\bigcap_{d \in \Bbb N} \overline M_d = 1$.
\end{corollary}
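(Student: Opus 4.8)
The plan is to deduce both assertions from the correspondence of Proposition~\ref{correspondence}, using two elementary facts: a finitely-generated group has only finitely many subgroups of any prescribed finite index, and in a Hausdorff profinite group the open normal subgroups form a neighbourhood base of the identity.

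First I would record that, since $\G$ is finitely-generated, there are only finitely many subgroups of index at most $d$ in $\G$ (each index-$n$ subgroup is a point-stabiliser for a transitive action of $\G$ on $n$ points, and $\Hom(\G,S_n)$ is finite). Hence $M_d$ is a finite intersection of finite-index normal subgroups, so it is itself a normal subgroup of finite index in $\G$; by Proposition~\ref{correspondence}, $\overline{M_d}$ is then a normal subgroup of finite index in $\widehat\G$, and such subgroups are open, hence closed, by the Nikolov--Segal theorem invoked there. The same proposition gives a bijection between the normal subgroups of index at most $d$ in $\G$ and those in $\widehat\G$, via $H\mapsto\overline H$ with inverse $N\mapsto N\cap\G$; in particular there are only finitely many normal subgroups of index at most $d$ in $\widehat\G$, so their intersection $\widehat M_d$ is again a normal subgroup of finite index (hence open and closed) in $\widehat\G$.

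Next I would prove $\overline{M_d}=\widehat M_d$ by two inclusions. For each normal subgroup $N$ of index at most $d$ in $\widehat\G$, the subgroup $N\cap\G$ is normal of index at most $d$ in $\G$, so $M_d\subseteq N\cap\G\subseteq N$; as $N$ is closed this yields $\overline{M_d}\subseteq N$, and intersecting over all such $N$ gives $\overline{M_d}\subseteq\widehat M_d$. Conversely, since $N\mapsto N\cap\G$ carries the normal subgroups of index at most $d$ in $\widehat\G$ bijectively onto those in $\G$, we get $\widehat M_d\cap\G=\bigcap_N(N\cap\G)=M_d$; and because $\widehat M_d$ has finite index in $\widehat\G$, Proposition~\ref{correspondence} gives $\widehat M_d=\overline{\widehat M_d\cap\G}=\overline{M_d}$. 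This is the first assertion of the corollary.

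Finally I would observe that $\bigcap_{d\in\Bbb N}\overline{M_d}=\bigcap_{d\in\Bbb N}\widehat M_d$ is contained in the intersection of all open normal subgroups of $\widehat\G$, which is trivial because $\widehat\G$ is Hausdorff; hence $\bigcap_{d}\overline{M_d}=1$. I do not expect any serious obstacle: the argument is essentially an application of Proposition~\ref{correspondence}, and the only point requiring attention is that the finite-index subgroups of $\widehat\G$ appearing above are genuinely closed, which is exactly what the Nikolov--Segal theorem guarantees.
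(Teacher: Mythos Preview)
Your argument is correct. The route differs slightly from the paper's. You use the bijection of Proposition~\ref{correspondence} in both directions: one inclusion comes from $M_d\subseteq N\cap\G\subseteq N$ and closedness of $N$, the other from computing $\widehat{M}_d\cap\G=M_d$ and then pulling back via $Y\mapsto\overline{Y\cap\G}$. The paper instead isolates the lemma $\overline{N_1\cap N_2}=\overline{N_1}\cap\overline{N_2}$ for finite-index normal subgroups, proved by observing that both sides are the kernel of the \emph{same} map $\widehat\G\to Q_1\times Q_2$ (by uniqueness of the extension in Lemma~\ref{NS}), and then iterates. Your approach avoids this intermediate identity at the cost of invoking the full correspondence twice; the paper's approach extracts a clean reusable statement about closures and intersections. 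Either way the final assertion $\bigcap_d\overline{M_d}=1$ follows immediately.
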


\begin{proof} If $N_1$ and $N_2$ are the kernels of epimorphisms from $\G$ to finite groups $Q_1$
and $Q_2$, then $\overline{N_1\cap N_2}$ is the kernel of the extension of $\G\to Q_1\times Q_2$
to $\widehat{\Gamma}$, while $\overline N_1\times \overline N_2$ is the kernel of the map $\widehat{\Gamma}\to Q_1\times Q_2$
that one gets by extending each of $\G\to Q_i$ and then taking the direct product. The uniqueness of
extensions tells us that these maps coincide, and hence $\overline{N_1\cap N_2} =\overline N_1
\cap \overline N_2$. The claims follow from repeated application of this observation.
\end{proof} 

We finish this subsection with a discussion of the relationship between
the statement
$\C(\G_1)=\C(\G_2)$ for finitely-generated residually finite
groups $\Gamma_1$ and $\Gamma_2$,  
and the statement that the profinite 
completions $\widehat{\Gamma}_1$ and $\widehat{\Gamma}_2$ are isomorphic.

Suppose that $\widehat{\Gamma}_1$ and $\widehat{\Gamma}_2$ are isomorphic.
Following the aforementioned work of Nikolov and Segal \cite{NS},
by `isomorphic' we simply mean `isomorphic as groups', for by
\cite{NS}, each group-theoretic
isomorphism is continuous, and so any such isomorphism will be
an isomorphism of topological groups.
Given an isomorphism between $\widehat{\Gamma}_1$
and $\widehat{\Gamma}_2$, it
is easy to deduce from Proposition~\ref{correspondence} that this implies
that $\Gamma_1$ and $\Gamma_2$ have the same collection of finite quotient
groups. The converse is also true; see \cite{DFPR} and \cite[pp.~88--89]{RZ}.

\begin{theorem}
\label{allfinite}
Suppose that $\Gamma_1$ and $\Gamma_2$ are finitely-generated abstract
groups. Then $\widehat{\Gamma}_1$ and $\widehat{\Gamma}_2$ are isomorphic if
and only if ${\cal C}(\Gamma_1) = {\cal C}(\Gamma_2)$.
\end{theorem}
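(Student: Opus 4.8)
The plan is to establish both directions, with the easy direction following directly from Proposition~\ref{correspondence} and the Nikolov–Segal theorem, and the harder converse requiring a Stone-duality type argument that reconstructs $\widehat{\Gamma}$ from the mere set of isomorphism classes $\C(\Gamma)$. First I would dispose of the forward implication: if $\varphi\colon\widehat{\Gamma}_1\to\widehat{\Gamma}_2$ is an isomorphism, then by Nikolov–Segal it is automatically a homeomorphism, so it carries the poset of finite-index (equivalently open) normal subgroups of $\widehat{\Gamma}_1$ isomorphically onto that of $\widehat{\Gamma}_2$, respecting quotients; combined with the normal-subgroup part of Proposition~\ref{correspondence} (which identifies $\Gamma_i/N$ with $\widehat{\Gamma}_i/\overline N$) this shows at once that $\C(\Gamma_1)=\C(\Gamma_2)$. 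Indeed one does not even need the abstract groups here: $\C(\widehat{\Gamma}_1)=\C(\widehat{\Gamma}_2)$ trivially, and $\C(\Gamma_i)=\C(\widehat{\Gamma}_i)$ by Lemma~\ref{NS}.

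The substantive direction is the converse: assuming $\C(\Gamma_1)=\C(\Gamma_2)$, construct an isomorphism $\widehat{\Gamma}_1\cong\widehat{\Gamma}_2$. The key point is that a finitely-generated profinite group is determined by a compatible family of data that $\C(\Gamma)$ already encodes, once one keeps track of multiplicities. Concretely, I would invoke (as permitted) the fact, recorded in \cite{DFPR} and \cite[pp.~88--89]{RZ}, that for finitely-generated groups the profinite completion is recovered from the function $Q\mapsto|\Epi(\Gamma,Q)|$ (equivalently from $\C(\Gamma)$ together with these counts) — but the cleaner route, and the one I would spell out, is this. Using Corollary~\ref{c:exhaust}, write $\widehat{\Gamma}_i=\varprojlim_d \widehat{\Gamma}_i/\overline M_d^{(i)}$, where $\widehat{\Gamma}_i/\overline M_d^{(i)}\cong\Gamma_i/M_d^{(i)}$ is the (finite) maximal quotient of $\Gamma_i$ all of whose simple factors have a certain bounded size. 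One then shows by induction on $d$ that $\C(\Gamma_1)=\C(\Gamma_2)$ forces $\Gamma_1/M_d^{(1)}\cong\Gamma_2/M_d^{(2)}$, compatibly with the maps in the inverse system, whence the inverse limits are isomorphic.

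The crux — and the step I expect to be the main obstacle — is the inductive comparison of the finite groups $\Gamma_1/M_d^{(1)}$ and $\Gamma_2/M_d^{(2)}$. Abstractly knowing that $\Gamma_1$ and $\Gamma_2$ have the same set of finite quotient \emph{isomorphism types} does not obviously pin down these canonical finite quotients, because a priori the correspondence between quotients of $\Gamma_1$ and quotients of $\Gamma_2$ need not be induced by a single homomorphism of profinite completions; one must rule out ``incoherent'' matchings. The resolution uses that $M_d$ is defined by an intrinsic universal property (intersection of \emph{all} normal subgroups of index $\le d$), so $\Gamma_i/M_d^{(i)}$ is the largest quotient of $\Gamma_i$ lying in the class of finite groups all of whose composition factors, suitably bounded, occur; this largest object is then forced to be the same for $\Gamma_1$ and $\Gamma_2$ simply because $\C(\Gamma_1)=\C(\Gamma_2)$ describes exactly which finite groups appear. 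Making this precise requires the observation from the proof of Corollary~\ref{c:exhaust} that $\overline{N_1\cap N_2}=\overline N_1\cap\overline N_2$, which guarantees that a compatible choice of quotients at each level $d$ glues to a genuine isomorphism of inverse limits rather than merely a bijection of quotient sets. I would then remark that, via the Nikolov–Segal theorem, the resulting abstract isomorphism $\widehat{\Gamma}_1\cong\widehat{\Gamma}_2$ is automatically topological, closing the loop. For the fully detailed argument I would defer to \cite{DFPR} and \cite[Chapter~3]{RZ}, indicating that the present exposition only needs the statement of Theorem~\ref{allfinite}, not a self-contained reproof.
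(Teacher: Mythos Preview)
Your proposal is correct and matches the paper's treatment: the paper does not actually prove Theorem~\ref{allfinite} but simply observes that the forward implication follows from Proposition~\ref{correspondence} and defers the converse to \cite{DFPR} and \cite[pp.~88--89]{RZ}, exactly as you do. Your added sketch of the converse via the characteristic quotients $\Gamma/M_d$ is essentially the standard argument in those references; just note that the correct intrinsic description of $\Gamma/M_d$ is ``the largest member of $\C(\Gamma)$ in which the intersection of all normal subgroups of index at most $d$ is trivial'' (not ``all composition factors bounded''), and that the compatibility of the isomorphisms across levels comes from a K\"onig-type compactness argument on the finite sets of isomorphisms, rather than from the identity $\overline{N_1\cap N_2}=\overline N_1\cap\overline N_2$.
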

 
We close the subsection with a comment on terminology.
In \cite{GZ} (see also \cite{A2}) the notion of genus of a group
$\G$ from within a class of groups $\cal G$ is discussed: it is
defined to be the set
of isomorphism classes of groups in $\cal G$ with the same profinite
completion as $\Gamma$, and is denoted by ${\bf g}({\cal G},\G)$.  Hence 
if we let $\cal L$ denote the set of all lattices in connected Lie groups, 
our main result can be rephrased as:

\begin{theorem}
\label{genusmain}
Let $\G$ be a finitely-generated Fuchsian group, then 
${\bf g}({\cal L},\G)=\{\G\}$.
\end{theorem}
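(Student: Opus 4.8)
The plan is to prove Theorem \ref{genusmain} (equivalently Theorem \ref{main1}) by a structural case analysis, reducing the problem of a general lattice $\G_2$ in a connected Lie group down to the concrete list of possibilities that can share a profinite completion with a finitely-generated Fuchsian group $\G_1$. First I would separate the abelian and non-abelian cases for $\G_1$, but since we are assuming all groups are non-elementary, $\G_1$ is a non-elementary Fuchsian group, so $\wh\G_1$ is not virtually abelian and not virtually a surface group times something degenerate; in particular $\wh\G_1$ is non-trivial, not procyclic, and (being the completion of a group with a finite-index subgroup surjecting a non-abelian free group) is large in a suitable profinite sense. Now, given a lattice $\G_2$ in a connected Lie group $L$ with $\wh\G_2\cong\wh\G_1$, I want to push $\G_2$ into the setting where known structure theory applies: $\G_2$ is finitely generated (since $\wh\G_1$ is, so $\wh\G_2$ is, and a lattice with finitely generated profinite completion is finitely generated), hence by a theorem of Mostow on finitely generated lattices, after passing to a finite-index subgroup $\G_2$ is (virtually) a lattice in a product of a nilpotent Lie group, a semisimple group, and compact factors. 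The abelianisation and rank data — $d(\wh\G_1)=d(\wh\G_2)$ and $H_1(\wh\G_1;\Z/p)=H_1(\wh\G_2;\Z/p)$ for all $p$ — together with the $L^2$-Betti number comparison from Section 3 (L\"uck approximation: $b_1^{(2)}$ is determined by $\wh\G$) will force $L$ to have the same ``size'' as $\PSL(2,\R)$; specifically, the positivity of the first $L^2$-Betti number of a non-elementary Fuchsian group rules out all higher-rank semisimple targets and all lattices with Kazhdan's property (T), and rules out nilpotent and polycyclic targets.

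The core of the argument, then, is to show that once $L$ is cut down appropriately, $\G_2$ must in fact be Fuchsian, and then to invoke the (already-established, or to-be-established earlier) rigidity statement that $\C$ distinguishes Fuchsian groups from one another and recovers them up to isomorphism. For the first part I would argue as follows. Having excluded property (T) and virtual solvability, and using that $\G_2$ is a finitely generated lattice, the only surviving possibilities for $L$ (up to the usual reductions — dividing by the compact factor, which is harmless at the level of the lattice up to finite index, and noting $\G_2$ cannot contain a subgroup of finite index splitting as a non-trivial product of two infinite groups, because $\wh\G_1$ does not, since non-elementary Fuchsian groups are not virtually products) are $\PSL(2,\R)$ itself and $\PSL(2,\CC)$ and possibly $\SO(n,1)$ or $\SU(n,1)$ in low rank. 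Here Section 5 enters: the control on torsion in $\wh\G_1$ for $\G_1$ Fuchsian — the fact that the torsion in $\wh\G_1$ is governed by the cone points of the orbifold and in particular $\wh\G_1$ has finite virtual cohomological dimension $2$ over every $\Z/p$, matching $\cd$ of $\G_1$ — will exclude the three-manifold and higher cases, since a non-uniform lattice in $\PSL(2,\CC)$ has a finite-index subgroup that is a closed-surface-times-line or, more to the point, has virtual cohomological dimension $2$ but different $L^2$ and torsion signature (a cusped hyperbolic 3-manifold group has $b_1^{(2)}=0$), and a uniform one has $\cd=3$; similarly $\SU(n,1)$ lattices have even higher dimension. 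What remains is that $\G_2$ is virtually Fuchsian, and since a group virtually isomorphic to a Fuchsian group which has the profinite completion of a Fuchsian group is itself Fuchsian (using goodness and the cohomological dimension argument from Section 2 to see that $\G_2$ has a torsion-free finite-index subgroup of cohomological dimension at most $2$ which is a surface group or free group, then bootstrapping back up), we conclude $\G_2$ is Fuchsian.

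Finally, with both $\G_1$ and $\G_2$ Fuchsian and $\wh\G_1\cong\wh\G_2$, I would close the argument using the quantitative invariants: the Euler characteristic $\chi(\G)$ is determined by $\wh\G$ (for instance via $b_1^{(2)}(\G)=-\chi(\G)$ when $\G$ is a torsion-free surface group, and via index-scaling $\chi$ over finite-index torsion-free subgroups in general, combined with Proposition \ref{correspondence} which matches the finite-index subgroup lattices), and the signature of the orbifold — the genus, the number of cone points, and the cone orders, plus whether there are punctures — can be read off from the torsion classes in $\wh\G$ and the structure of $H_1$ of finite-index subgroups (the abelianisation of a Fuchsian group of signature $(g;m_1,\dots,m_r)$, and more sharply those of its finite-index subgroups, pin down the tuple). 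Since a finitely generated Fuchsian group is determined up to isomorphism by its signature, this yields $\G_1\cong\G_2$.

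I expect the main obstacle to be the step that excludes $\PSL(2,\CC)$ and the other rank-one possibilities and, hand in hand with it, the step that promotes ``virtually Fuchsian'' to ``Fuchsian''. The $L^2$-Betti number comparison kills the higher-rank and property-(T) cases cleanly, and virtual solvability is easy, but separating a non-uniform lattice in $\PSL(2,\CC)$ from a Fuchsian group at the level of profinite completions is delicate: both can have positive first Betti number over $\Q$, both are good, both have torsion-free finite-index subgroups, and one must use a genuinely $3$-dimensional invariant — most naturally $\cd_p$ over a finite field $\Bbb F_p$, which is $\le 2$ for (the torsion-free finite-index subgroups of) a Fuchsian group and exactly $3$ for a closed hyperbolic $3$-manifold group, together with the vanishing of $b_1^{(2)}$ for cusped hyperbolic $3$-manifolds — and one must check these invariants are visible in the profinite completion via Serre's goodness and the cohomology theory of profinite groups recalled in Section 2. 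Getting the torsion-control of Section 5 to interface correctly with the goodness arguments, so that $\cd_p(\wh{\G})$ really does detect $\cd_p$ of a torsion-free finite-index subgroup, is where the technical weight of the proof will lie.
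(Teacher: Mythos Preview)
Your overall architecture is in the right spirit, and the endgame (recovering the signature of a Fuchsian group from its profinite completion via $b_1$, Euler characteristic, and torsion data) is essentially what the paper does in Section~6. But there are two places where your outline diverges from the paper in a way that leaves genuine gaps.

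First, your reduction from an arbitrary connected Lie group $L$ to a short list of rank-one semisimple candidates is too informal. You invoke a ``theorem of Mostow'' and then argue case-by-case against $\PSL(2,\CC)$, $\SO(n,1)$, $\SU(n,1)$, but you do not handle the centre of $L$ or the possibility that $\G_2$ fails to be residually finite (remember $L$ is only assumed connected, not linear). The paper's Theorem~3.6 deals with this carefully: it passes to the image $\Delta$ of $\G_2$ in $\wh{\G_2}$, uses Cheeger--Gromov (infinite amenable normal subgroup forces $b_1^{(2)}=0$) to peel off the centre and the radical, and then quotes Lott's lemma, which says directly that a lattice in a centre-free semisimple Lie group with $b_1^{(2)}>0$ is either Fuchsian or has a non-trivial finite normal subgroup. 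This bypasses your $\PSL(2,\CC)$ and $\SU(n,1)$ case analysis entirely; the obstacle you flag as the hardest step simply does not arise in the paper's route.

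Second, and more seriously, your ``bootstrapping back up'' from virtually Fuchsian to Fuchsian is not a valid argument as written. Knowing that $\G_2$ has a finite-index surface or free subgroup does not make $\G_2$ Fuchsian: a non-split finite central extension of a surface group is a counterexample in the abstract. What is needed is to rule out a non-trivial finite normal subgroup in $\G_2$ (equivalently in $\wh{\G_2}\cong\wh{\G_1}$). The paper does exactly this via Theorem~5.1, which shows that the natural map $\conj(\G_1)\to\conj(\wh{\G_1})$ is a bijection, so any finite normal subgroup of $\wh{\G_1}$ would already be a finite normal subgroup of the non-elementary Fuchsian group $\G_1$, hence trivial (Corollary~5.2). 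This is the precise mechanism that closes the gap you identified; your cohomological-dimension and goodness arguments do not substitute for it, because they only see finite-index data, not the normal-subgroup structure at the top.
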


We have chosen not to use this notation elsewhere in this article in order to avoid confusion with the
classical use of the term genus in the sense of surfaces (and their fundamental groups).

\subsection{Completion and closure}  

Let $\Gamma$ be a residually finite (abstract) group,
and $u:H\hookrightarrow \Gamma$ the inclusion mapping of a proper subgroup
$H$ of $\Gamma$.  The canonical homomorphism $\widehat{u}:\widehat{H}
\rightarrow \widehat{\Gamma}$ is injective if and only if {\em{the profinite topology 
on $\Gamma$ induces the full profinite topology on $H$}}; in more elementary terms, for
every normal subgroup $H_1<H$ of finite index,
there is a normal subgroup $\Gamma_1 <\Gamma$ of finite index with
$\Gamma_1\cap H < H_1$. 

Recall that if $\G$ is a group and $H$ a subgroup of $\G$, then $\G$ is
called $H$-separable if for every $g\in G\smallsetminus H$, there is a
subgroup $K$ of finite index in $\G$ such that $H \subset K$ but
$g\notin K$; equivalenly, the intersection of
all finite index subgroups in $\Gamma$ containing $H$ is precisely $H$.
The group $\G$ is called {\em LERF} (or {\em subgroup separable}) if it is
$H$-separable for every finitely-generated subgroup $H$, or equivalently, 
if every finitely-generated subgroup is a closed subset in the profinite topology. 

In the context of this paper, it is important
to note that even if the subgroup $H$ of $\G$ is separable, it need not be the case that the profinite topology 
on $\Gamma$ induces the full profinite topology on $H$.
Stronger separability properties do suffice, however, as we now indicate. 
The following lemma  is well-known, but we include a proof for the convenience
of the reader.

\begin{lemma}
\label{inducefulltop}
Let $\Gamma$ be a finitely-generated group, and $H$ a finitely-generated subgroup of $\Gamma$. 
Suppose that $\Gamma$ is $H_1$-separable for every finite index subgroup $H_1$ in $H$. 
Then the profinite topology on $\Gamma$ induces the full profinite topology on $H$; that is,
the natural map $\widehat{H} \rightarrow \overline{H}$ is an 
isomorphism. \end{lemma}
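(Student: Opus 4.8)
The plan is to prove the equivalent formulation of the lemma directly: namely, that for every finite-index normal subgroup $H_1 \triangleleft H$, there is a finite-index subgroup $\Gamma_1 \le \Gamma$ with $\Gamma_1 \cap H \le H_1$. As noted in the preceding paragraph, this condition is exactly what makes the canonical map $\widehat{u}\colon \widehat H \to \widehat\Gamma$ injective, and since its image is visibly $\overline H$ (the closure of $H$ in $\widehat\Gamma$, as $\widehat u$ is continuous with dense image in $\overline H$), injectivity of $\widehat u$ is equivalent to $\widehat H \to \overline H$ being an isomorphism. So everything reduces to producing the subgroup $\Gamma_1$.

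First I would fix $H_1 \triangleleft H$ of finite index, say $[H:H_1] = n$, and pass to the core $H_2 = \bigcap_{h \in H} h H_1 h^{-1}$, which is still normal and finite-index in $H$ (finite-index because $H$ is finitely generated, so it has only finitely many subgroups of index $n$). Now consider the finitely many subgroups $K_1, \dots, K_r$ of $H$ that have index exactly $[H:H_2]$; each of these contains some conjugate data, but the key point is that $H_2 = \bigcap_i K_i$ is the intersection of all subgroups of $H$ of that index. The hypothesis gives, for each $g \in H \smallsetminus H_2$ — or more efficiently, for each $K_i$ and each coset representative $g \notin K_i$ — a finite-index subgroup of $\Gamma$ containing $K_i$ but excluding $g$. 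Intersecting finitely many of these (over a transversal of $K_i$ in $H$) yields a finite-index subgroup $L_i \le \Gamma$ with $L_i \cap H = K_i$. Then $\Gamma_1 := \bigcap_{i=1}^r L_i$ is finite-index in $\Gamma$ and satisfies $\Gamma_1 \cap H = \bigcap_i (L_i \cap H) = \bigcap_i K_i = H_2 \le H_1$, which is exactly what is required.

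Alternatively, and perhaps more cleanly, I would argue: the hypothesis says $\Gamma$ is $K$-separable for every finite-index $K \le H$; combined with $H$ being finitely generated (hence having finitely many subgroups of each finite index), separability of a finite-index subgroup $K$ of $H$ upgrades to the existence of a finite-index $L_K \le \Gamma$ with $L_K \cap H = K$ — one takes the intersection over a transversal of $K$ in $H$ of the finite-index subgroups of $\Gamma$ that contain $K$ and exclude the given coset representative. Applying this with $K = H_2$ directly produces $\Gamma_1 = L_{H_2}$ with $\Gamma_1 \cap H = H_2 \le H_1$. One should double-check that $L_K \cap H = K$ rather than merely $L_K \cap H \supseteq K$: since each factor in the intersection contains $K$ and misses a specific element of $H \smallsetminus K$, and these missed elements range over all nontrivial cosets, the intersection with $H$ is forced down to $K$ exactly.

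The main obstacle, and the only place where the hypothesis is genuinely used, is the passage from "$\Gamma$ is $K$-separable" (a statement about separating $K$ from individual elements) to "there is a single finite-index $L \le \Gamma$ with $L \cap H = K$" (a statement separating $K$ from an entire coset set simultaneously). This is where finite generation of $H$ is essential: it guarantees that $[H:K] < \infty$ means $H \smallsetminus K$ is covered by finitely many cosets, so only finitely many separating subgroups need to be intersected, keeping $L$ of finite index in $\Gamma$. Without finite generation of $H$ this step fails, and indeed the lemma is false in general — this is precisely the subtlety flagged just before the lemma, that separability of $H$ alone does not suffice. Everything else is routine bookkeeping with cores and transversals, and the identification of $\mathrm{im}(\widehat u)$ with $\overline H$ is standard (the image of $\widehat H$ is compact, hence closed, hence equals the closure of the dense subset $u(H)$).
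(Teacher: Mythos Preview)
Your argument is correct and follows the same approach as the paper: given $H_1$ of finite index in $H$, use separability together with the finiteness of a transversal to produce a single finite-index subgroup of $\Gamma$ whose intersection with $H$ is contained in $H_1$. The paper's proof is simply your ``cleaner'' alternative, stated in three terse sentences (it applies separability directly to $H_1$, finds $K_1\le\Gamma$ of finite index with $K_1\cap H = H_1$, then passes to the normal core). One small correction to your commentary: the step ``$[H:K]<\infty$ means $H\smallsetminus K$ is covered by finitely many cosets'' is true by definition of finite index and does not require finite generation of $H$; finite generation is only used in your first, more elaborate route to ensure there are finitely many subgroups of a given index, and that route is unnecessary once you apply separability directly to $H_1$.
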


\begin{proof} Since $\Gamma$ is $H_1$ separable, 
the intersection of all subgroups of finite index in $\Gamma$ containing $H_1$ 
is $H_1$ itself.  Thus we can find a finite-index subgroup $K_1$ of $\Gamma$ such that $K_1\cap H = H_1$. Replacing $K_1$
by a normal subgroup $\Gamma_1$ of finite index provides the required
subgroup $\Gamma_1\cap H$ of $H_1$.
\end{proof}

Subgroups of finite index are obviously separable.

\begin{corollary}\label{c1}
If $\G$ is residually finite and $H$ is a finite-index subgroup of $\Gamma$, then 
the natural map from $\widehat{H}$ to $\overline H$ (the closure of $H$ in $\widehat{\Gamma}$) is an isomorphism. 
\end{corollary}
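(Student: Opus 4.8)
The plan is to apply Lemma~\ref{inducefulltop} directly, after observing that a finite-index subgroup $H$ satisfies the separability hypothesis trivially. First I would note that if $H$ has finite index in $\G$, then every finite-index subgroup $H_1$ of $H$ also has finite index in $\G$; this is the routine multiplicativity of indices, $|\G:H_1| = |\G:H|\cdot|H:H_1| < \infty$. Consequently, taking $K_1 = H_1$ itself already exhibits a finite-index subgroup of $\G$ that contains $H_1$ and meets $H$ in exactly $H_1$, so $\G$ is $H_1$-separable in the (vacuous) sense required. In other words, the profinite topology on $\G$ automatically induces the full profinite topology on $H$ when $[\G:H]<\infty$, because the defining condition --- for every finite-index normal $H_1 \lhd H$ there is a finite-index $\G_1 \lhd \G$ with $\G_1 \cap H < H_1$ --- is met by choosing $\G_1$ to be the normal core in $\G$ of $H_1$.

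Having made this observation, I would invoke Lemma~\ref{inducefulltop} with the given $H$: since $\G$ is $H_1$-separable for every finite-index subgroup $H_1 \le H$ (indeed every such $H_1$ is closed in the profinite topology on $\G$, being of finite index), the lemma yields that the natural map $\widehat H \to \overline H$ is an isomorphism. Strictly speaking one should also confirm that $H$, being of finite index in the finitely-generated group $\G$, is itself finitely generated --- this is the Reidemeister--Schreier fact --- so that the hypotheses of Lemma~\ref{inducefulltop} are genuinely in force. That completes the argument.

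There is essentially no obstacle here: the corollary is the simplest instance of the preceding lemma, and the only thing to check is that finite-index subgroups are separable, which the sentence immediately preceding the corollary already asserts. The proof is therefore a two-line deduction.

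\begin{proof}
Since $H$ has finite index in $\G$, every finite-index subgroup $H_1$ of $H$ has finite index in $\G$, and hence $\G$ is $H_1$-separable (a finite-index subgroup being closed in the profinite topology). By Lemma~\ref{inducefulltop}, the natural map $\widehat H \to \overline H$ is an isomorphism.
\end{proof}
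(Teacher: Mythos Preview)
Your proof is correct and follows exactly the same route as the paper: the paper simply remarks that ``subgroups of finite index are obviously separable'' and then states the corollary as an immediate consequence of Lemma~\ref{inducefulltop}. Your additional care in noting that every finite-index $H_1\le H$ has finite index in $\G$, and that $H$ is finitely generated by Reidemeister--Schreier, just makes explicit what the paper leaves implicit.
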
 

\begin{corollary}\label{same}
If $\G_1$ and $\G_2$ are finitely-generated and residually finite and $\widehat{\Gamma}_1\cong\widehat{\Gamma}_2$,
then there is a bijection $I$ from the set of subgroups of finite index in $\G_1$ to
the set of subgroups of finite-index in $\G_2$, such that $\widehat{I(H)}\cong\widehat{H}$
and $|\G_1:H|=|\G_2:I(H)|$ for every subgroup $H$ of finite index in $\G_1$.
Moreover, $H$ is normal in $\G_1$ if and only if $I(H)$ is normal in $\G_2$.
\end{corollary}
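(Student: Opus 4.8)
The plan is to construct the bijection $I$ directly from an isomorphism $\phi:\widehat{\Gamma}_1\to\widehat{\Gamma}_2$ using the dictionary provided by Proposition~\ref{correspondence}, and then verify that the asserted properties are automatic once everything is phrased in profinite terms.

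\medskip

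First I would fix an isomorphism $\phi:\widehat{\Gamma}_1\xrightarrow{\sim}\widehat{\Gamma}_2$. By Proposition~\ref{correspondence}, the finite-index subgroups of $\widehat{\Gamma}_1$ are exactly the closures $\overline{H}$ of finite-index subgroups $H$ of $\Gamma_1$, and similarly for $\widehat{\Gamma}_2$; moreover $H\mapsto\overline{H}$ is a bijection with inverse $Y\mapsto Y\cap\Gamma_1$, and it preserves index and normality. Given a finite-index subgroup $H<\Gamma_1$, the subgroup $\phi(\overline{H})<\widehat{\Gamma}_2$ has finite index, so I define $I(H):=\phi(\overline{H})\cap\Gamma_2$. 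This is a finite-index subgroup of $\Gamma_2$, and the construction is manifestly bijective: its inverse sends a finite-index subgroup $H'<\Gamma_2$ to $\phi^{-1}(\overline{H'})\cap\Gamma_1$, and the two round-trips are identities because $\overline{Y\cap\Gamma_i}=Y$ for every finite-index $Y<\widehat{\Gamma}_i$ and $\phi$ is a bijection on the lattices of finite-index subgroups.

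\medskip

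Next I would read off the three claimed properties. For the index equality, Proposition~\ref{correspondence} gives $|\Gamma_1:H|=|\widehat{\Gamma}_1:\overline{H}|$, and since $\phi$ is an isomorphism this equals $|\widehat{\Gamma}_2:\phi(\overline{H})|$; applying Proposition~\ref{correspondence} again on the $\Gamma_2$ side, $\phi(\overline{H})=\overline{I(H)}$ and so this equals $|\Gamma_2:I(H)|$. The normality statement is identical: $H\trianglelefteq\Gamma_1$ iff $\overline{H}\trianglelefteq\widehat{\Gamma}_1$ iff $\phi(\overline{H})\trianglelefteq\widehat{\Gamma}_2$ iff $\overline{I(H)}\trianglelefteq\widehat{\Gamma}_2$ iff $I(H)\trianglelefteq\Gamma_2$. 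Finally, for the profinite completions: by Corollary~\ref{c1}, since $H$ has finite index in the residually finite group $\Gamma_1$, the natural map $\widehat{H}\to\overline{H}$ is an isomorphism, and likewise $\widehat{I(H)}\to\overline{I(H)}$ is an isomorphism. Since $\phi$ restricts to an isomorphism $\overline{H}\to\phi(\overline{H})=\overline{I(H)}$, composing gives $\widehat{H}\cong\overline{H}\cong\overline{I(H)}\cong\widehat{I(H)}$, as required.

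\medskip

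I do not expect a genuine obstacle here: the statement is essentially a bookkeeping corollary of the correspondence in Proposition~\ref{correspondence} together with Corollary~\ref{c1}. The only point requiring a little care is to be sure that the map $H\mapsto\overline{H}$ really does invert $Y\mapsto Y\cap\Gamma$ in \emph{both} directions on finite-index subgroups (so that $I$ is a bijection and not merely a well-defined injection), but this is exactly what Proposition~\ref{correspondence} asserts. One should also note that the use of Corollary~\ref{c1} (rather than Lemma~\ref{inducefulltop} in general) is legitimate precisely because we only ever apply it to \emph{finite-index} subgroups.
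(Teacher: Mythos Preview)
Your proof is correct and follows exactly the same approach as the paper: define $I(H)=\phi(\overline{H})\cap\Gamma_2$, invoke Proposition~\ref{correspondence} for the bijection, index, and normality statements, and invoke Corollary~\ref{c1} for the isomorphism $\widehat{H}\cong\overline{H}\cong\overline{I(H)}\cong\widehat{I(H)}$. The only difference is that you spell out more of the routine verifications that the paper leaves to the reader.
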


\begin{proof} If $\phi:\widehat{\Gamma}_1\to\widehat{\Gamma}_2$ is an isomorphism,  define $I(H)= \phi(\overline H)\cap\G_2$.
Proposition~\ref{correspondence} tells us that $I$ is a bijection and that
$|\G_1:H|=|\G_2:I(H)|$. Corollary~\ref{c1}
tells us that $\widehat{H}\cong \overline H \cong \overline{I(H)} \cong \widehat{I(H)}$.
\end{proof}

\subsection{Betti numbers}

The first betti number of a finitely-generated group is
$$b_1(\Gamma) = \hbox{rank}((\Gamma/[\Gamma,\Gamma]) \otimes_{\Z} {\Q}).$$
This invariant can be detected in the finite quotients of $\G$ since
it is the {\em least integer $b$ such that $\G$ has the elementary $p$-group of rank $b$ 
among its quotients, for every prime $p$.} We exploit this observation as follows:

\begin{lemma}\label{l:denseb1}
Let $\Lambda$ and $\G$ be finitely-generated groups. If $\Lambda$ is isomorphic to a dense
subgroup of $\widehat{\G}$, then $b_1(\Lambda) \ge b_1(\Gamma)$.
\end{lemma}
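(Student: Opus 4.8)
The plan is to detect $b_1$ through the finite quotients, exactly as highlighted in the lemma's preamble, and then transport that detection across the density hypothesis. First I would recall the characterisation: for a finitely-generated group $G$, the first Betti number $b_1(G)$ equals the largest $b$ such that, for \emph{every} prime $p$, the elementary abelian $p$-group $(\Z/p)^b$ is a quotient of $G$; equivalently, $b_1(G) = \min_p \dim_{\Bbb F_p} H_1(G;\Bbb F_p) - (\text{torsion contributions that wash out across primes})$, so that $b_1(G) = \max\{\, b : (\Z/p)^b \in \C(G)\ \forall p\,\}$. The key point is that this is phrased purely in terms of which finite groups arise as quotients.

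Next I would use the density hypothesis. Let $\iota:\Lambda\hookrightarrow\wh\G$ be the given embedding with dense image. For any prime $p$ and any $b\le b_1(\G)$, the group $\G$ surjects onto $(\Z/p)^b$; by Lemma~\ref{NS} (the Nikolov--Segal input), this surjection extends to a continuous epimorphism $\wh\G\twoheadrightarrow(\Z/p)^b$. Restricting along $\iota$: since $\iota(\Lambda)$ is dense in $\wh\G$ and $(\Z/p)^b$ is finite (hence discrete), the composite $\Lambda\to\wh\G\to(\Z/p)^b$ is surjective. Therefore $(\Z/p)^b\in\C(\Lambda)$ for every prime $p$. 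Applying the characterisation of $b_1$ in the other direction to $\Lambda$, this forces $b_1(\Lambda)\ge b$. Taking $b = b_1(\G)$ gives $b_1(\Lambda)\ge b_1(\G)$, as required.

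The only genuinely delicate point is the first step: justifying that $b_1(G)$ is exactly the supremum of those $b$ with $(\Z/p)^b$ a quotient of $G$ for all $p$, rather than merely a lower bound. I would argue this via $H_1(G;\Z) \cong \Z^{b_1(G)}\oplus T$ with $T$ finite; for a prime $p$ not dividing $|T|$ one has $H_1(G;\Bbb F_p)\cong(\Bbb F_p)^{b_1(G)}$, so $(\Z/p)^{b_1(G)}$ is a quotient but $(\Z/p)^{b_1(G)+1}$ is not, while for $p\mid|T|$ one only gets a possibly larger elementary abelian quotient. Intersecting over all $p$ picks out $b_1(G)$ precisely. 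This is standard, so I would state it with a one-line justification and move on. Note the argument does not need $\Lambda$ to be residually finite, nor an isomorphism of profinite completions — density alone suffices — which is presumably why the lemma is stated in this slightly asymmetric form (it will be applied with $\Lambda$ a discrete group sitting densely inside the profinite completion of a Fuchsian group).
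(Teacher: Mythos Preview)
Your proposal is correct and follows essentially the same route as the paper: characterise $b_1$ via elementary $p$-group quotients, then use that every epimorphism $\G\to(\Z/p)^b$ extends to $\widehat\G$ and restricts surjectively to the dense subgroup $\Lambda$. One small remark: the extension of $\G\to(\Z/p)^b$ to $\widehat\G$ is just the universal property of the profinite completion and does not require the Nikolov--Segal theorem, so your appeal to Lemma~\ref{NS} is harmless but heavier than necessary.
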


\begin{proof} For every finite group $A$, each epimorphism $\widehat{\Gamma}\to A$
will restrict to an epimorphism from both $\Gamma$ and $\Lambda$, and every epimorphism
$\Gamma\to A$ extends to an epimorphism $\widehat{\Gamma}\to A$. 
Therefore, if $\G$ maps onto an elementary $p$-group of rank $b$, 
then so does $\Lambda$ (but perhaps not vice versa).
\end{proof}

\begin{corollary}\label{sameb1} If $\G_1$ and $\G_2$ are finitely-generated 
and $\widehat{\Gamma}_1\cong\widehat{\Gamma}_2$,
then $b_1(\G_1)=b_1(\G_2)$.
\end{corollary}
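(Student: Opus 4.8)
The plan is to read this off directly from Lemma~\ref{l:denseb1}, together with the characterisation of $b_1$ recorded just above that lemma: $b_1(\Lambda)$ is the least integer $b$ such that $\Lambda$ has $(\Bbb Z/p)^b$ as a quotient for every prime $p$. In particular $b_1$ depends only on the set of finite quotients of the group.

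First I would fix an isomorphism $\phi\colon\widehat{\Gamma}_1\to\widehat{\Gamma}_2$ and form the composite $\psi=\phi\circ\iota_1\colon\Gamma_1\to\widehat{\Gamma}_2$, where $\iota_1\colon\Gamma_1\to\widehat{\Gamma}_1$ is the canonical map. Since $\iota_1$ has dense image and $\phi$ is an isomorphism of topological groups, $\psi(\Gamma_1)$ is a dense subgroup of $\widehat{\Gamma}_2$. Now $\psi(\Gamma_1)$ is a quotient of $\Gamma_1$ (by the finite residual), so $\Gamma_1$ and $\psi(\Gamma_1)$ have the same finite quotients, and hence $b_1(\psi(\Gamma_1))=b_1(\Gamma_1)$ by the displayed characterisation. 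Applying Lemma~\ref{l:denseb1} with $\Lambda=\psi(\Gamma_1)$ inside $\widehat{\Gamma}_2$ gives $b_1(\Gamma_1)=b_1(\psi(\Gamma_1))\ge b_1(\Gamma_2)$. The whole argument is symmetric in the two groups (replace $\phi$ by $\phi^{-1}$), so $b_1(\Gamma_2)\ge b_1(\Gamma_1)$ as well, and equality follows.

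There is no genuine obstacle here; the statement is an immediate consequence of Lemma~\ref{l:denseb1}, and the only point worth a remark is that $\Gamma_i$ is not assumed residually finite, so one must work with the image $\psi(\Gamma_1)$ rather than with $\Gamma_1$ itself and note that passing to the image modulo the finite residual changes neither the set of finite quotients nor, therefore, $b_1$. An essentially equivalent route, if one prefers, is to invoke Corollary~\ref{c:sameHom}: taking $Q=(\Bbb Z/p)^k$ gives $p^{k\,d_p(\Gamma_1)}=|\Hom(\Gamma_1,Q)|=|\Hom(\Gamma_2,Q)|=p^{k\,d_p(\Gamma_2)}$, where $d_p(\Gamma)=\dim_{\Bbb F_p}H^1(\Gamma;\Bbb F_p)$; hence $d_p(\Gamma_1)=d_p(\Gamma_2)$ for every prime $p$, and since $b_1(\Gamma)=\min_p d_p(\Gamma)$ (equality holds for all $p$ not dividing the torsion of $H_1(\Gamma;\Bbb Z)$) the conclusion follows.
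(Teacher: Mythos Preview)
Your proposal is correct and follows exactly the route the paper intends: the corollary is placed immediately after Lemma~\ref{l:denseb1} with no explicit proof, and the intended argument is precisely the symmetry argument you give. Your extra care in passing to the image $\psi(\Gamma_1)$ to handle the possibility that $\Gamma_1$ is not residually finite is appropriate and correct, and your alternative route via Corollary~\ref{c:sameHom} is also valid.
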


\subsection{Goodness and cohomological dimension}\label{s:good} 

Following Serre \cite{Se}, we say that a group $\G$ is {\em good} 
if for every finite $\G$-module $M$, the homomorphism of cohomology groups
$$H^n(\widehat{\G};M)\rightarrow H^n(\G;M)$$
induced by the natural map $\G\rightarrow \widehat{\G}$ is an isomorphism
between the cohomology of $\G$ and the continuous cohomology of $\widehat{\G}$. 
(See \cite{Se} and \cite[Chapter 6]{RZ} for details about the cohomology of profinite groups.) 

It is easy to see that free groups are good, but it seems that 
goodness is hard to establish in general. One can, however,
establish goodness for a group $\G$ that is LERF if one has a well-controlled
splitting of the group as a graph of groups \cite{GJZ}. 
Using the fact that Fuchsian groups are LERF \cite{Sc},
it is proved in \cite{GJZ} that Fuchsian groups are good.

Now a useful criterion for goodness is provided by the next lemma due to 
Serre (see \cite[Chapter 1, Section 2.6]{Se})

\begin{lemma}
\label{goodexact}
The group $\G$ is good if there is a short exact sequence 
$$ 1\rightarrow N\rightarrow \G\rightarrow H\rightarrow 1,$$
such that $H$ and $N$ are good, $N$ is finitely-generated, and the
cohomology group $H^q(N,M)$ is finite for every $q$ and every finite
$\G$-module $M$.
\end{lemma}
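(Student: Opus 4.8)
The plan is to follow the standard dévissage argument that Serre records, combining the Lyndon--Hochschild--Serre spectral sequence of the extension $1\to N\to\G\to H\to 1$ with its profinite analogue. First I would pass to the profinite completions: because $N$ is finitely generated and $\G$ is finitely generated, one checks (using Proposition~\ref{correspondence}, or directly from the fact that $N$ has finite index in each of its finite-index overgroups and the topology is controlled) that the closure $\overline N$ of $N$ in $\widehat\G$ is isomorphic to $\widehat N$, that $\widehat\G/\overline N\cong\widehat H$, and hence that there is a short exact sequence of profinite groups $1\to\widehat N\to\widehat\G\to\widehat H\to 1$. This is the step where one must be slightly careful: one needs that the profinite topology on $\G$ induces the full profinite topology on $N$, which follows because every finite-index subgroup of $N$ contains a subgroup that is normal and of finite index in $\G$ — here finite generation of $N$ and the fact that $N$ is normal do the work, via a characteristic-subgroup argument.

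Next I would write down the two spectral sequences with coefficients in an arbitrary finite $\G$-module $M$: the discrete one
$$E_2^{p,q}=H^p(H;H^q(N;M))\ \Longrightarrow\ H^{p+q}(\G;M)$$
and the continuous profinite one
$$\widehat E_2^{p,q}=H^p(\widehat H;H^q(\widehat N;M))\ \Longrightarrow\ H^{p+q}(\widehat\G;M),$$
together with the morphism of spectral sequences induced by the compatible completion maps $N\to\widehat N$, $H\to\widehat H$, $\G\to\widehat\G$. Since $N$ is good, the comparison map $H^q(\widehat N;M)\to H^q(N;M)$ is an isomorphism for all $q$; moreover the hypothesis that $H^q(N;M)$ is finite for every $q$ and every finite $M$ is exactly what guarantees that $H^q(\widehat N;M)$ is a \emph{finite} $\widehat H$-module, so that it makes sense (and is legitimate) to feed it into the cohomology of the profinite group $\widehat H$. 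Because $H$ is good, the comparison map $H^p(\widehat H;V)\to H^p(H;V)$ is then an isomorphism for this finite coefficient module $V=H^q(\widehat N;M)\cong H^q(N;M)$. Hence the morphism of spectral sequences is an isomorphism on the $E_2$-page, $\widehat E_2^{p,q}\xrightarrow{\ \sim\ }E_2^{p,q}$, and therefore an isomorphism on all subsequent pages and on the abutment, which gives $H^n(\widehat\G;M)\cong H^n(\G;M)$ for all $n$; one checks this isomorphism is the natural comparison map, so $\G$ is good.

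The main obstacle, and the only genuinely non-formal point, is the finiteness bookkeeping: one must know that $H^q(\widehat N;M)$ is finite so that Serre's theory of cohomology of profinite groups with finite coefficients applies to $\widehat H$ acting on it, and one must know the spectral-sequence comparison map is defined and natural in the profinite setting — this is where I would cite \cite[Chapter 6]{RZ} for the existence and functoriality of the LHS spectral sequence for profinite groups with finite modules, and \cite[Chapter 1, \S2.6]{Se} for the dévissage formalism itself. Everything else — the five-lemma-style page-by-page comparison, convergence, identification of the map on the abutment with the canonical one — is routine once the $E_2$ isomorphism is in place. I would also remark that the hypothesis ``$N$ finitely generated'' is used twice: once to get the well-behaved profinite exact sequence, and implicitly in controlling the coefficient modules; and that the finiteness of $H^q(N;M)$ cannot be dropped, since it is precisely what lets us regard $H^q(N;M)$ as a finite module over the profinite group $\widehat H$.
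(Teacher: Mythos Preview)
The paper does not supply its own proof of this lemma; it simply attributes the result to Serre and cites \cite[Chapter~1, \S2.6]{Se}. Your proposal is exactly the spectral-sequence comparison argument that Serre sketches there (as an exercise), so you have in fact provided more than the paper does.

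One slip worth flagging: your justification for the profinite exact sequence $1\to\widehat N\to\widehat\G\to\widehat H\to 1$ is not quite right as written. The characteristic-subgroup trick produces, inside any finite-index subgroup of $N$, a subgroup that is normal in $\G$ and of finite index \emph{in $N$} --- not of finite index in $\G$ (indeed, if $H$ is infinite no subgroup of $N$ can have finite index in $\G$). What one actually needs is that for every such characteristic $N_2\trianglelefteq\G$ the finite normal subgroup $N/N_2$ of $\G/N_2$ survives in some finite quotient of $\G/N_2$; this is the genuine content of ``the profinite topology on $\G$ induces the full profinite topology on $N$'', and it does not follow from finite generation of $N$ alone. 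You are right that this is the only delicate step, and right to defer to the references for it; just be aware that the one-line justification you gave does not quite do the job.
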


Recent work of Agol \cite{agol} and Wise \cite{Wis} establishes that
finite-volume hyperbolic 3-manifolds are virtually fibered, and so
Lemma \ref{goodexact} can be applied to establish goodness for the
fundamental groups of such manifolds.  In addition, since goodness is
preserved by commensurability (see \cite{GJZ} Lemma 3.2), 
it follows that the fundamental groups of all finite-volume hyperbolic
3-manifolds are good. We
summarize this discussion, and that for Fuchsian groups, in the
following result:

\begin{theorem}
\label{good}
Lattices in $\PSL(2,{\Bbb R})$ and $\PSL(2,{\Bbb C})$ are good.
\end{theorem}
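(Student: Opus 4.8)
The plan is to handle the two families of lattices separately, in each case reducing goodness to a group that is manifestly good and then transporting goodness across a finite-index inclusion. Throughout, recall that a lattice in $\PSL(2,\R)$ or $\PSL(2,\CC)$ is finitely generated (discreteness together with finite covolume forces this) and residually finite (being finitely generated and linear), so the notion of goodness is meaningful for it.

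For a lattice $\Gamma<\PSL(2,\R)$ there is nothing to add: $\Gamma$ is a finitely-generated Fuchsian group, and such groups are good. I would simply invoke this, pointing to the fact that Fuchsian groups are LERF \cite{Sc} and to the argument of \cite{GJZ}, which exploits the canonical description of a finitely-generated Fuchsian group as a graph of cyclic and free groups (together with Lemma~\ref{goodexact}) to conclude goodness.

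For a lattice $\Gamma<\PSL(2,\CC)$ I would proceed as follows. By Selberg's lemma, $\Gamma$ contains a torsion-free subgroup $\Gamma_1$ of finite index, and $\Gamma_1$ is the fundamental group of an orientable finite-volume hyperbolic $3$-manifold $M$. By the work of Agol \cite{agol} and Wise \cite{Wis}, $M$ is virtually fibered; passing to a suitable finite-index subgroup $\Gamma_0\le\Gamma_1$ we obtain a short exact sequence
$$1\longrightarrow N\longrightarrow \Gamma_0\longrightarrow \Z\longrightarrow 1,$$
in which $N$ is the fundamental group of the fiber surface. If $M$ is closed, $N$ is a closed surface group of genus at least $2$; if $M$ has cusps, the fiber is an open surface of finite type and $N$ is free of finite rank. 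In either case $N$ is finitely generated and good (free groups are good, and closed surface groups of genus at least $2$ are good, being Fuchsian \cite{GJZ}), $\Z$ is good, and $N$ admits a finite classifying space (a compact surface, respectively a finite graph), so $H^q(N;A)$ is finite for every $q$ and every finite $\Gamma_0$-module $A$. Hence Lemma~\ref{goodexact} applies and shows that $\Gamma_0$ is good.

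Finally, to move from $\Gamma_0$ up to $\Gamma$, I would use that goodness is a commensurability invariant: a group possessing a good subgroup of finite index is itself good (\cite{GJZ}, Lemma~3.2). Applying this to $\Gamma_0\le\Gamma$ completes the argument. The only deep ingredient in this chain is the virtual fibering theorem of Agol and Wise; everything else is routine — Selberg's lemma, the standard cohomological finiteness of surface and free groups, Serre's Lemma~\ref{goodexact}, and commensurability invariance of goodness. The principal point to be careful about is therefore to apply the fibering step in the correct generality: to cocompact and non-cocompact lattices alike, and (via Selberg) to lattices with torsion, keeping track of the fact that the fiber group $N$ is a closed surface group or a free group according as $M$ is closed or cusped.
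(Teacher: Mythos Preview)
Your argument is correct and follows essentially the same route as the paper: for $\PSL(2,\R)$ you cite \cite{GJZ} (via LERF), and for $\PSL(2,\CC)$ you use virtual fibering (Agol--Wise), apply Lemma~\ref{goodexact} to the resulting surface-by-$\Z$ (or free-by-$\Z$) structure, and then invoke commensurability invariance of goodness. The only difference is that you spell out Selberg's lemma and the closed/cusped dichotomy for the fiber, which the paper leaves implicit.
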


\medskip

Let $G$ be a profinite group.  
Then the {\em $p$-cohomological dimension} of $G$ is the least integer $n$ such 
that for every discrete torsion $G$-module $A$ and for 
every $q>n$, the $p$-primary component of $H^q(G;A)$ is zero, 
and this is  denoted by $\cd_p(G)$. 
The cohomological dimension of $G$ is defined 
as the supremum of $\cd_p(G)$ over all primes $p$, 
and this is denoted by $\cd(G)$.  

We also retain the standard notation
$\cd(\G)$ for the cohomological dimension (over $\Z$) of a 
{\em discrete} group $\G$.
%

\begin{lemma} 
\label{cdim}
Let $\G$ be a discrete group that is good. If $\cd(\G)\le n$,
then $\cd(\widehat{\Gamma})\le n$.
\end{lemma}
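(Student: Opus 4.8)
The plan is to use the defining property of goodness to transfer vanishing of cohomology from the discrete group $\G$ to the profinite completion $\widehat{\Gamma}$, and then to reduce the general statement about discrete torsion modules to the case of finite modules. First I would recall that $\cd(\G)\le n$ means $H^q(\G;M)=0$ for all $q>n$ and all $\G$-modules $M$ (in particular all finite ones). Since $\G$ is good, the natural map $H^q(\widehat{\Gamma};M)\to H^q(\G;M)$ is an isomorphism for every finite $\G$-module $M$; hence $H^q(\widehat{\Gamma};M)=0$ for every $q>n$ and every finite $\widehat{\Gamma}$-module $M$ (note that a finite discrete $\widehat{\Gamma}$-module is the same thing as a finite $\G$-module, since the $\G$-action factors through a finite quotient).

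Next I would promote this to arbitrary discrete torsion modules. Fix a prime $p$; we must show the $p$-primary part of $H^q(\widehat{\Gamma};A)$ vanishes for $q>n$ and every discrete torsion $G$-module $A$. The standard reduction (see \cite[Chapter 6]{RZ}) is: every discrete torsion module is the direct limit of its finite submodules, and continuous cohomology of a profinite group commutes with direct limits of discrete modules, so $H^q(\widehat{\Gamma};A)=\varinjlim H^q(\widehat{\Gamma};A_i)$ over the finite submodules $A_i\subseteq A$. Each term on the right vanishes for $q>n$ by the previous paragraph, so the limit does too. Since this holds for every discrete torsion module, $\cd_p(\widehat{\Gamma})\le n$ for every $p$, and therefore $\cd(\widehat{\Gamma})\le n$.

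The one point that needs a little care — and the step I would flag as the main obstacle — is the compatibility of coefficients: one must check that a finite $\G$-module, regarded via $\G\to\widehat{\Gamma}$, really does extend uniquely to a finite \emph{discrete} $\widehat{\Gamma}$-module and conversely, so that the goodness isomorphism applies to exactly the modules appearing in the definition of $\cd_p(\widehat{\Gamma})$. This is straightforward: a finite $\G$-module is a homomorphism $\G\to\mathrm{Aut}(M)$ with finite image, which factors through some $\G/N$ and hence extends to $\widehat{\Gamma}/\overline N$, giving a continuous (equivalently, by Nikolov--Segal, automatically continuous) action of $\widehat{\Gamma}$; restricting back along $\G\to\widehat{\Gamma}$ recovers the original action. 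Nothing deeper is required, so the lemma follows once these identifications and the direct-limit property of continuous cohomology are in place.
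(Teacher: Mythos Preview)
Your argument is correct and is the standard proof of this fact: vanishing for finite coefficients via goodness, then passage to arbitrary discrete torsion modules via the direct-limit property of continuous profinite cohomology, together with the identification of finite $\G$-modules with finite discrete $\widehat{\Gamma}$-modules. The paper itself does not supply a proof of this lemma; it is stated without argument (as essentially immediate from the definitions and the basic properties of continuous cohomology in \cite{Se} and \cite[Chapter 6]{RZ}), so there is nothing to compare beyond noting that your write-up fills in exactly the details one would expect.
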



Discrete groups of finite cohomological dimension (over $\Z$) are torsion-free. We are interested in conditions that allow one to deduce that $\wh{\G}$ is also torsion-free.
For this we need the following result
that mirrors the behaviour of cohomological
dimension for discrete  abstract groups (see \cite[Chapter 1 \S 3.3]{Se}).

\begin{proposition}
\label{cdbound}
Let $p$ be a prime, let $G$ be a profinite group, and $H$ a closed subgroup of $G$. 
Then $\cd_p(H) \leq \cd_p(G)$.
\end{proposition}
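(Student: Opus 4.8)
\textbf{Proof proposal for Proposition~\ref{cdbound}.}
The plan is to mimic the classical argument for discrete groups found in Serre's \emph{Cohomologie Galoisienne}, Chapter I, \S3.3, transporting each step to the profinite setting using the machinery of the cohomology of profinite groups (as developed in \cite[Chapter 6]{RZ}). Fix a prime $p$; we may assume $n := \cd_p(G) < \infty$, since otherwise there is nothing to prove. The goal is to show that for every discrete torsion $H$-module $A$ and every $q > n$, the $p$-primary part of $H^q(H;A)$ vanishes. Since cohomology commutes with direct limits of coefficient modules, and a torsion module is the direct limit of its finite submodules, it suffices to treat finite $H$-modules $A$; and decomposing a finite module into its $\ell$-primary parts, it suffices to take $A$ a finite $p$-primary $H$-module.

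The key step is an induction/coinduction argument. Given a discrete (finite $p$-primary) $H$-module $A$, one forms the induced module ${\rm Ind}_H^G(A)$ (equivalently the coinduced module; since $H$ is closed but not necessarily of finite index one must be slightly careful here, but the relevant module is the one supported on the profinite coset space $G/H$ with the appropriate continuity condition — see \cite[\S6.10]{RZ}). Shapiro's lemma for profinite groups then gives a natural isomorphism $H^q(G;{\rm Ind}_H^G(A)) \cong H^q(H;A)$ for all $q$. Since ${\rm Ind}_H^G(A)$ is again a discrete torsion (indeed $p$-primary) $G$-module, the hypothesis $\cd_p(G) = n$ forces $H^q(G;{\rm Ind}_H^G(A)) = 0$ for $q > n$, and hence $H^q(H;A) = 0$ for $q > n$, as required. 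Taking the supremum over $q$ gives $\cd_p(H) \le n = \cd_p(G)$.

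The main obstacle — and the only point requiring genuine care rather than formal manipulation — is the construction of the correct induced/coinduced module when $H$ is an arbitrary closed subgroup of infinite index, together with the verification that it lies in the category of discrete torsion $G$-modules (so that the defining property of $\cd_p(G)$ actually applies to it) and that Shapiro's lemma holds in the form stated. The subtlety is that for infinite-index closed subgroups the induced and coinduced modules need not coincide, and one must use the version appropriate to \emph{discrete} coefficient modules; the relevant statements are, however, exactly those assembled in \cite[Chapter 6]{RZ}, so once the correct module is identified the argument is routine. An alternative, essentially equivalent route avoids coinduction entirely: embed $A$ into a cohomologically trivial module and dimension-shift, again reducing to the defining vanishing property of $\cd_p(G)$; but the Shapiro's-lemma approach is cleanest and most closely parallels the discrete case.
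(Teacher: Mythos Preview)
The paper does not give its own proof of this proposition; it simply records it as a known result, citing \cite[Chapter 1 \S 3.3]{Se} in the sentence immediately preceding the statement. Your proposal is correct and is precisely the standard argument found in that reference (and in \cite[Chapter 6]{RZ}): reduce to finite $p$-primary coefficients, coinduce from $H$ to $G$, and apply Shapiro's lemma together with the hypothesis $\cd_p(G)=n$.
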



\begin{corollary}
\label{notorsion2}
Suppose that $\Gamma$ is a residually finite, good group  
of finite cohomological dimension over $\Z$. Then $\widehat{\G}$ is
torsion-free.
\end{corollary}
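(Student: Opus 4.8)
The plan is to combine the two results just stated: goodness passes cohomological dimension to the profinite completion (Lemma~\ref{cdim}), and finite $p$-cohomological dimension is inherited by closed subgroups (Proposition~\ref{cdbound}). So first I would let $n=\cd(\G)$, which is finite by hypothesis, and invoke Lemma~\ref{cdim} to conclude $\cd(\widehat\G)\le n$; in particular $\cd_p(\widehat\G)\le n$ for every prime $p$.

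Next, I would argue by contradiction. Suppose $\widehat\G$ has nontrivial torsion, so it contains an element $x$ of prime order $p$ for some prime $p$. The closed subgroup generated by $x$ is then a copy of the cyclic group $\Z/p\Z$, which is a closed (indeed finite) subgroup of $\widehat\G$. By Proposition~\ref{cdbound}, $\cd_p(\Z/p\Z)\le \cd_p(\widehat\G)\le n<\infty$. But it is a standard fact in the cohomology of profinite groups (exactly as for discrete groups) that a nontrivial finite $p$-group has infinite $p$-cohomological dimension: $H^{2k}(\Z/p\Z;\,\Z/p\Z)\ne 0$ for all $k\ge 0$, so $\cd_p(\Z/p\Z)=\infty$. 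This contradiction shows $\widehat\G$ is torsion-free.

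One point deserving a line of care is why a torsion element of $\widehat\G$ gives a genuine \emph{closed} subgroup to which Proposition~\ref{cdbound} applies: a finite subgroup of any Hausdorff topological group is automatically closed (it is compact), so the finite cyclic group generated by $x$ is closed in $\widehat\G$, and $\cd_p$ of it computed in the sense of profinite group cohomology agrees with the usual (discrete) group cohomology since the group is finite. That is the only mild subtlety; everything else is a direct chaining of the cited results.

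I do not anticipate a serious obstacle here — the corollary is essentially a packaging of Lemma~\ref{cdim} and Proposition~\ref{cdbound} together with the classical computation of the cohomology of cyclic $p$-groups. If one wanted to be slightly more self-contained, one could replace the "element of order $p$" step by: any nontrivial torsion profinite group contains a closed subgroup isomorphic to $\Z/p\Z$ for some prime $p$ (take the closure of a finite cyclic subgroup generated by a torsion element and pass to a subgroup of prime order), but this is immediate.
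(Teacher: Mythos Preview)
Your proof is correct and follows essentially the same route as the paper: both argue by contradiction, take an element of prime order, use Proposition~\ref{cdbound} to bound $\cd_p$ of the cyclic subgroup it generates, and derive a contradiction from the periodicity of the cohomology of $\Z/p\Z$ together with Lemma~\ref{cdim}. Your extra remark that a finite subgroup of a Hausdorff group is automatically closed is a helpful clarification the paper leaves implicit.
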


\begin{proof} If $\wh{\G}$ were not torsion-free, then it would have 
an element $x$ of prime order, say $q$. Since  $\<x\>$ is a closed
subgroup of $\widehat{\G}$,  Proposition~\ref{cdbound} tells us that
$\cd_p(\<x\>)\leq \cd_p(\widehat{\G})$ for all primes $p$.  
But $H^{2k}(\<x\>;{\bf F}_q)\neq 0$ for all $k>0$, so
$\cd_q(\<x\>)$ and $\cd_q(\wh{\G})$ are infinite. 
Since $\G$ is good and has finite cohomological dimension over $\Z$, 
we obtain a contradiction from Lemma~\ref{cdim}.
\end{proof}

The following simple observation will prove very useful in the sequel. 

\begin{corollary}
\label{cdboundapply}
Let 
$\Gamma_1$ and $\Gamma_2$ be finitely-generated (abstract)
residually finite groups with $\widehat{\Gamma}_1 \cong \widehat{\Gamma}_2$. Assume
that $\Gamma_1$ is good and ${\cd}(\Gamma_1) = n<\infty$. Furthermore, assume
that $H$ is a good
subgroup of $\Gamma_2$ for which the natural mapping 
$\widehat{H}\rightarrow \widehat{\Gamma}_2$ is injective. 
Then $H^q(H;{\bf F}_p)=0$ for all $q > n$ and all primes $p$.
\end{corollary}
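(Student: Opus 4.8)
The plan is to chain together the results already assembled in this section. First I would use the hypothesis $\widehat{\Gamma}_1 \cong \widehat{\Gamma}_2$ together with the assumption that $\Gamma_1$ is good with $\cd(\Gamma_1)=n<\infty$: Lemma~\ref{cdim} gives $\cd(\widehat{\Gamma}_1)\le n$, and transporting this across the isomorphism yields $\cd(\widehat{\Gamma}_2)\le n$, so in particular $\cd_p(\widehat{\Gamma}_2)\le n$ for every prime $p$.

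Next I would exploit the hypothesis on $H$. Since $H$ is a subgroup of $\Gamma_2$ for which the natural map $\widehat{H}\to\widehat{\Gamma}_2$ is injective, the closure $\overline{H}$ of (the image of) $H$ in $\widehat{\Gamma}_2$ is a closed subgroup isomorphic to $\widehat{H}$. Proposition~\ref{cdbound} then gives $\cd_p(\widehat{H})=\cd_p(\overline H)\le \cd_p(\widehat{\Gamma}_2)\le n$ for all primes $p$. Finally, because $H$ is good, the continuous cohomology of $\widehat H$ computes the ordinary cohomology of $H$ with finite coefficients: for each prime $p$, $H^q(H;{\bf F}_p)\cong H^q(\widehat H;{\bf F}_p)$, and the right-hand side vanishes for $q>n$ by the bound on $\cd_p(\widehat H)$ just obtained. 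Hence $H^q(H;{\bf F}_p)=0$ for all $q>n$ and all primes $p$, as claimed.

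The only point requiring care — and the nearest thing to an obstacle — is the bookkeeping around which module category goodness refers to, and making sure the coefficient module ${\bf F}_p$ is legitimately viewed both as a finite $\Gamma_2$-module (so that goodness of $H$ applies, via restriction of the trivial action) and as a discrete torsion $\widehat H$-module (so that the definition of $\cd_p$ applies). Both are immediate for the trivial module ${\bf F}_p$, so no real difficulty arises; one simply invokes the definition of goodness from Section~\ref{s:good} and the definition of $\cd_p$ for profinite groups. I would also note explicitly that goodness of $H$ is used only for the single coefficient system ${\bf F}_p$ (with trivial action), which is all the statement needs.
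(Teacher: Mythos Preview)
Your proof is correct and follows essentially the same route as the paper's: both use Lemma~\ref{cdim} (goodness of $\Gamma_1$) to bound $\cd_p(\widehat{\Gamma}_1)\cong\cd_p(\widehat{\Gamma}_2)$, then Proposition~\ref{cdbound} applied to the closed subgroup $\overline H\cong\widehat H$, and finally goodness of $H$ to transfer the vanishing back to $H$. The paper phrases it as a contrapositive while you argue directly, but the content is identical.
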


\begin{proof} If $H^q(H;{\bf F}_p)$ were non-zero for some
$q>n$, then by goodness we would have  $H^q(\widehat{H};{\bf F}_p)\neq
0$, so $\cd_p(\widehat{H}) \geq q>n$. Now
$\widehat{H}\to \widehat{\Gamma}_2$ is injective and so 
$\widehat{H}\cong\overline{H}$.  Hence $\widehat{\Gamma}_1$ contains a
closed subgroup of $p$-cohomological dimension greater than $n$, a 
contradiction.
\end{proof}

\subsection{Actions on profinite trees} 

In this subsection we recall some basics of Bass-Serre
theory for profinite groups that will be required later. 
(See \cite{MZ} and \cite{RZ} for more detailed accounts.) 

If $G_1$ and $G_2$ are profinite groups with a common profinite subgroup $H$, 
then we denote by $G_1 \amalg_H G_2$ the {\em profinite amalgamated free product}  
of $G_1$ and $G_2$, with $H$ amalgamated.

As in the case of abstract groups, if a profinite group $G$
splits as a profinite amalgamated free product, 
then there is control on where any given finite subgroup lies. The following
result was proved for free profinite products in \cite{HeR} and for more
general profinite amalgamated free products in \cite[Theorem 3.10]{MZ}.

\begin{theorem}
\label{finite_in_vertex}
Let $G=G_1 \amalg_H G_2$ be any profinite amalgamated free product. 
If $K$ is a finite subgroup of $G$, then $K$ is conjugate to a subgroup of $G_1$ or $G_2$.
\end{theorem}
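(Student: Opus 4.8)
The final statement to prove is Theorem~\ref{finite_in_vertex}: in a profinite amalgamated free product $G = G_1 \amalg_H G_2$, every finite subgroup $K$ is conjugate into $G_1$ or $G_2$.

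\medskip

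The plan is to mimic the Bass--Serre theory argument from the abstract setting, replacing the abstract Bass--Serre tree by the \emph{standard profinite tree} associated to the amalgam. First I would recall that to $G = G_1 \amalg_H G_2$ one associates a profinite tree $T$ (the inverse limit of the standard trees of the finite quotient amalgams, or equivalently the coset space structure $G/G_1 \sqcup G/G_2$ with edge set $G/H$) on which $G$ acts continuously, with vertex stabilisers the conjugates of $G_1$ and $G_2$ and edge stabilisers the conjugates of $H$. The key structural input --- which I would cite rather than reprove --- is that this $T$ is genuinely a profinite tree in the sense of Ribes--Zalesskii, i.e. the relevant chain complex is exact, so that fixed-point arguments are available.

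\medskip

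The heart of the proof is then a profinite fixed-point statement: a finite (hence, being profinite and finite, closed) subgroup $K$ acting continuously on a profinite tree $T$ must fix a point of $T$. For abstract group actions on simplicial trees this is the classical fact that a group with a bounded orbit fixes a point; in the profinite category the analogue requires more care, but it is exactly the content of the fixed-point theorems in \cite{MZ} (and, for free profinite products, \cite{HeR}). I would invoke this: since $K$ is finite, it is in particular a finite group acting on $T$, and one deduces (via the exactness of the chain complex of $T$, or by passing to the finite quotient trees and taking an inverse limit of nonempty fixed-point sets) that $T^K \neq \emptyset$. Picking $t \in T^K$, if $t$ is a vertex then $K$ lies in the stabiliser of $t$, which is a conjugate $g G_i g^{-1}$, giving the conclusion; if $t$ were an edge, $K$ would lie in a conjugate of $H$, which is contained in a conjugate of $G_1$, so the conclusion holds a fortiori.

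\medskip

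The main obstacle is the profinite fixed-point theorem itself: unlike in the discrete case, an inverse limit of nonempty compact sets is nonempty (so the inverse-limit strategy is promising), but one must check that the fixed-point sets in the finite quotient trees are nonempty and that the transition maps are the right ones --- and more fundamentally that $T$ really is a profinite tree so that the connectedness/acyclicity needed is available. Since the statement of the theorem explicitly defers to \cite[Theorem 3.10]{MZ} and \cite{HeR}, my proposed ``proof'' is really an organised citation: establish that $T$ is the standard profinite tree for the amalgam, verify that its vertex and edge stabilisers are the conjugates of $G_1, G_2$ and $H$ respectively, and then quote the profinite fixed-point theorem for finite groups acting on profinite trees to land $K$ inside a vertex stabiliser. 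No delicate calculation is needed beyond these structural identifications.
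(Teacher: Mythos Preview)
The paper does not prove this theorem at all: it simply cites the result from the literature, attributing the free-product case to Herfort--Ribes \cite{HeR} and the general amalgamated case to Mel'nikov--Zalesskii \cite[Theorem~3.10]{MZ}. Your proposal correctly identifies the strategy behind those references (standard profinite tree, vertex/edge stabilisers, profinite fixed-point theorem for finite groups) and, appropriately, you yourself concede that what you have written is ``really an organised citation'' rather than a self-contained proof---which is precisely the paper's stance as well.
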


Now if $A$ and $B$ are abstract groups with a common subgroup $C$, 
and $\Gamma$ denotes the abstract amalgamated free product $A *_C B$, 
then it is not generally true that $\widehat{\Gamma}$ is isomorphic to
$\widehat{A} \amalg_{\widehat{C}} \widehat{B}$.  
But this does hold when $\Gamma$ is residually finite and the
profinite topology on $\Gamma$ induces the full profinite topologies
on $A$, $B$ and $C$; see \cite[pp.~379--380]{RZ}.  
In particular, we record the following consequence that will be useful for us.

\begin{proposition}
\label{profinitesplitting}
If $\Gamma = \G_1 *_C \G_2$ is LERF, then 
$\widehat{\Gamma} \cong \widehat{\Gamma}_1 \amalg_{\widehat{C}} \widehat{\Gamma}_2$.
\end{proposition}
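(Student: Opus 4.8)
The plan is to deduce this from the general theorem that $\widehat{A *_C B}\cong\widehat A\amalg_{\widehat C}\widehat B$ whenever $\Gamma=A*_C B$ is residually finite and the profinite topology on $\Gamma$ induces the full profinite topology on each of $A$, $B$ and $C$; this is the statement cited above from \cite[pp.~379--380]{RZ}. So the only real work is to verify that the LERF hypothesis on $\Gamma$ supplies those three induced-topology conditions. First I would note that any LERF group is in particular residually finite (take $H=1$), so $\Gamma$ is residually finite and the abstract amalgam makes sense as a subgroup of its own profinite completion.

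Next I would invoke Lemma~\ref{inducefulltop}: to show the profinite topology on $\Gamma$ induces the full profinite topology on a finitely-generated subgroup $H$, it suffices that $\Gamma$ be $H_1$-separable for every finite-index subgroup $H_1$ of $H$. Now each $H_1$ of finite index in a finitely-generated subgroup $H$ is itself finitely-generated, and $\Gamma$ being LERF means precisely that $\Gamma$ is $K$-separable for every finitely-generated subgroup $K$; applying this with $K=H_1$ gives exactly the hypothesis of Lemma~\ref{inducefulltop}. Hence the natural maps $\widehat A\to\overline A$, $\widehat B\to\overline B$ and $\widehat C\to\overline C$ are all isomorphisms, i.e. the profinite topology on $\Gamma$ induces the full profinite topology on each of $A=\Gamma_1$, $B=\Gamma_2$ and $C$. (Here one uses implicitly that the vertex and edge groups of the splitting are finitely-generated; for the Fuchsian and free-group applications in this paper this is automatic, and in any case a splitting of a finitely-generated group over a finitely-generated subgroup has finitely-generated vertex groups when the underlying graph is finite.)

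With all three induced-topology conditions in hand, the cited result of \cite{RZ} applies verbatim and yields $\widehat{\Gamma}\cong\widehat{\Gamma}_1\amalg_{\widehat C}\widehat{\Gamma}_2$, which is the claim. I do not expect any genuine obstacle here: the proposition is essentially a packaging of Lemma~\ref{inducefulltop} together with the amalgam-of-completions theorem from \cite{RZ}, and the LERF hypothesis is exactly the right strength to feed the former. If anything, the only point requiring a word of care is the finite-generation of the edge and vertex groups, which one should either assume as part of the setup or note is automatic in the intended applications; beyond that the argument is a short chain of citations.
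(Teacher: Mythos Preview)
Your argument is correct and is exactly the deduction the paper has in mind: the proposition is stated immediately after the citation to \cite[pp.~379--380]{RZ} as a direct consequence, with no separate proof given, and your use of Lemma~\ref{inducefulltop} to pass from LERF to the induced-topology hypotheses is precisely the intended bridge. Your remark about the implicit finite-generation of the vertex and edge groups is a fair point that the paper leaves tacit; it is indeed needed for LERF and Lemma~\ref{inducefulltop} to apply, and is automatic in the Fuchsian setting where the result is used.
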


A case of  particular  interest to us occurs when $\Gamma$ is a 
finitely-generated Fuchsian group; such groups are known to be LERF (see \cite{Sc}). 

\begin{corollary}
\label{profinitesurface}
If $\Gamma =A *_C B$ is a finitely-generated Fuchsian group, then
$\widehat{\Gamma} \cong \widehat{A} \amalg_{\widehat{C}}
\widehat{B}$.
\end{corollary}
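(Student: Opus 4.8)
The plan is to deduce this immediately from Proposition~\ref{profinitesplitting} by verifying the hypothesis that a finitely-generated Fuchsian group is LERF. The statement to be proved, Corollary~\ref{profinitesurface}, asserts that if $\Gamma = A *_C B$ is a finitely-generated Fuchsian group, then $\widehat{\Gamma} \cong \widehat{A} \amalg_{\widehat{C}} \widehat{B}$; and Proposition~\ref{profinitesplitting} already gives exactly this conclusion under the single hypothesis that $\Gamma$ is LERF. So the only thing to supply is that a finitely-generated Fuchsian group is LERF.

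First I would invoke Scott's theorem \cite{Sc}, which establishes that finitely-generated Fuchsian groups (equivalently, the fundamental groups of compact surfaces, possibly with boundary and possibly non-orientable, as well as their finite extensions corresponding to cone points) are LERF, i.e. every finitely-generated subgroup is closed in the profinite topology. This is precisely the class of groups under consideration here, since by the standing conventions of the paper all Fuchsian groups are finitely generated and non-elementary. With $\Gamma$ known to be LERF, the amalgam decomposition $\Gamma = A *_C B$ automatically satisfies the conditions needed in Proposition~\ref{profinitesplitting}: LERF ensures that the profinite topology on $\Gamma$ induces the full profinite topology on each of the finitely-generated subgroups $A$, $B$ and $C$, which is the key point behind that proposition (cf. \cite[pp.~379--380]{RZ}). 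Applying the proposition then yields $\widehat{\Gamma} \cong \widehat{A} \amalg_{\widehat{C}} \widehat{B}$, as claimed.

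There is essentially no obstacle here beyond correctly citing Scott's separability result: the content has been front-loaded into Proposition~\ref{profinitesplitting} and the background on profinite amalgams recalled earlier in this subsection. One small point worth a sentence of care is that the vertex and edge groups $A$, $B$, $C$ arising in a splitting of a finitely-generated Fuchsian group are themselves finitely generated (indeed they are again virtually free or virtually surface groups), so that "LERF" genuinely applies to them and the induced-topology statement is the full profinite topology on each; this is automatic from the structure of such splittings but is worth noting so that the hypotheses of Proposition~\ref{profinitesplitting} are seen to be met verbatim. Hence the proof is a two-line deduction: Fuchsian $\Rightarrow$ LERF by \cite{Sc}, then apply Proposition~\ref{profinitesplitting}.
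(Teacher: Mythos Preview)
Your proposal is correct and follows exactly the paper's own reasoning: the paper notes that finitely-generated Fuchsian groups are LERF by Scott \cite{Sc} and then states the corollary as an immediate consequence of Proposition~\ref{profinitesplitting}. Your additional remark that $A$, $B$, $C$ are finitely generated is a helpful clarification but not something the paper spells out.
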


Note that it follows from \cite{LMR} that if $\Gamma$ is any finitely-generated
Fuchsian group that is not a $(p,q,r)$-triangle group, then $\Gamma$
can be decomposed as non-trivial free products with amalgamation $A *_{C} B$. 
(For most Fuchsian groups, this follows easily using a
geometric decomposition of the hyperbolic 2-orbifold.)

\section{$L^2$-Betti numbers and profinite completions}\label{s:L2}

$L^2$-Betti numbers provide powerful invariants for distinguishing the profinite
completions of various classes of groups that cluster around free groups. We refer the
reader to L\"uck's treatise \cite{Lu2} for a comprehensive introduction to $L^2$-Betti numbers.
For our purposes, it is best to view these invariants not in terms of their original (more
analytic) definition, but instead as asymptotic invariants of towers of finite-index subgroups.
This is made possible by the L\"uck's Approximation Theorem \cite{Lu1}:

\begin{theorem}
\label{luck} Let $G$ be a finitely-presented group, 
and let 
$G = G_1 > G_2 > \ldots > G_m > \ldots$ be a sequence of finite-index subgroups
that are normal in $G$ and  intersect in the identity. The first $L^2$-Betti
number of $G$ is given by the formula
$$\displaystyle b_1^{(2)}(G) = \lim\limits_{m\rightarrow \infty} 
        {{{b_1(G_m)}\over{|G:G_m|}}}.$$
\end{theorem}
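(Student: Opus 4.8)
This is L\"uck's Approximation Theorem, so the plan is to reproduce the structure of L\"uck's original argument \cite{Lu1}: compare, for a single finite matrix over the integral group ring, the spectral measure of its action on $\ell^2(G)$ with the spectral measures of its actions on the finite quotients, and then extract the mass at $0$ in each case.

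First I would fix a finite presentation of $G$, let $X$ be the associated presentation $2$-complex, $\widetilde X$ its universal cover, and $X_m$ the intermediate cover corresponding to $G_m$. The cellular chain complex of $\widetilde X$ in degrees $0,1,2$ consists of finitely-generated free $\Z[G]$-modules, with boundary maps given by finite matrices over $\Z[G]$; let $\Delta=\partial_1^*\partial_1+\partial_2\partial_2^*$ be the combinatorial Laplacian on $1$-chains, a self-adjoint positive element of $M_{c_1}(\Z[G])$ (where $c_1$ is the number of $1$-cells of $X$). By the $L^2$-Hodge decomposition, $b_1^{(2)}(G)=\dim_{\mathcal N(G)}\ker\Delta=\mu(\{0\})$, where $\mu$ is the spectral measure of $\Delta$ acting on $\ell^2(G)^{c_1}$ relative to the von Neumann trace. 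Reducing the entries of $\Delta$ modulo $G_m$ gives a symmetric positive \emph{integer} matrix $\Delta_m$ acting on $C_1(X_m;\R)\cong\R[G/G_m]^{c_1}$, and finite-dimensional Hodge theory gives $b_1(G_m)/|G:G_m|=\frac{1}{|G:G_m|}\dim_{\R}\ker\Delta_m=\mu_m(\{0\})$, where $\mu_m$ is the spectral measure of $\Delta_m$ with respect to the normalised trace $\tr_m=|G:G_m|^{-1}\mathrm{Tr}$.

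Next I would establish weak convergence $\mu_m\to\mu$ by comparing moments. For each fixed $k$, $\tr_m(\Delta_m^k)$ equals the sum over the diagonal entries of $\Delta^k\in M_{c_1}(\Z[G])$ of the total coefficient of the elements lying in $G_m$; since $\Delta^k$ has only finitely many nonzero coefficients and $\bigcap_m G_m=1$, for all large $m$ the only surviving contribution is that of the identity, so $\tr_m(\Delta_m^k)\to\tr_{\mathcal N(G)}(\Delta^k)=\int\lambda^k\,d\mu$. The operator norms of the $\Delta_m$ are bounded uniformly by the $\ell^1$-norm $K$ of $\Delta$ over $\ell^1(G)$, so all the measures are supported in $[0,K]$ and moment convergence upgrades to weak convergence. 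This step is essentially formal and uses only that the $G_m$ intersect trivially.

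The crux — and the step I expect to be the main obstacle — is a \emph{uniform} control of the spectrum of $\Delta_m$ near $0$, needed because the atom at $0$ is not continuous under weak convergence. Here integrality enters decisively: the product of the nonzero eigenvalues of $\Delta_m$ is, up to sign, the lowest nonzero coefficient of its characteristic polynomial, hence a nonzero integer, hence at least $1$ in absolute value. Combining $\prod_{\lambda_j\neq 0}\lambda_j\ge 1$ with the uniform bound $\lambda_j\le K$ shows that, for every $\epsilon\in(0,1)$, the number of eigenvalues of $\Delta_m$ in $(0,\epsilon)$ is at most $|G:G_m|\,c_1\log K/\log(1/\epsilon)$, so $\mu_m((0,\epsilon))\le c_1\log K/\log(1/\epsilon)$ uniformly in $m$. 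Finally I would combine this with weak convergence: for $\epsilon$ a continuity point of $\mu$, $\limsup_m\mu_m(\{0\})\le\limsup_m\mu_m([0,\epsilon])\le\mu([0,\epsilon])$, while (using that all measures are supported on $[0,\infty)$) $\liminf_m\mu_m(\{0\})\ge\liminf_m\mu_m([0,\epsilon))-\sup_m\mu_m((0,\epsilon))\ge\mu(\{0\})-c_1\log K/\log(1/\epsilon)$; letting $\epsilon\to 0$ forces $\mu_m(\{0\})\to\mu(\{0\})$, which is the asserted formula. The same device adapts to higher degrees and to non-normal chains, but that is not needed here.
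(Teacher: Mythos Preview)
The paper does not prove this theorem: it is stated as L\"uck's Approximation Theorem and cited from \cite{Lu1}, then used as a black box (to derive Proposition~\ref{dense} and Corollary~\ref{c:sameL2}). So there is no ``paper's own proof'' to compare against.

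That said, your sketch faithfully reproduces the architecture of L\"uck's original proof in \cite{Lu1}: realise both $b_1^{(2)}(G)$ and $b_1(G_m)/[G:G_m]$ as the mass at $0$ of the spectral measures of the combinatorial Laplacian on $\ell^2(G)^{c_1}$ and on $\R[G/G_m]^{c_1}$ respectively; get weak convergence $\mu_m\to\mu$ by matching moments (using $\bigcap_m G_m=1$ and the uniform $\ell^1$-bound on operator norms); and then upgrade weak convergence to convergence of the atom at $0$ via the integrality trick (the product of the nonzero eigenvalues of the integer matrix $\Delta_m$ has absolute value $\ge 1$, forcing $\mu_m((0,\epsilon))\le c_1\log K/\log(1/\epsilon)$ uniformly in $m$). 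The Portmanteau-style squeeze at the end is fine once one notes that the measures are supported in $[0,K]$, so $[0,\epsilon)$ may be replaced by an open interval. There is no gap here; this is exactly the argument the paper is invoking by citation.
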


An important point to note is that this limit does not depend on the tower, and
hence is an invariant of $G$.

\begin{proposition} 
\label{dense}
Let $\Lambda$ and $\G$ be finitely-presented residually
finite groups
and suppose that $\Lambda$ is a dense subgroup of  $\widehat{\Gamma}$. 
Then  $b_1^{(2)}(\Gamma)\le  b_1^{(2)}(\Lambda)$.
\end{proposition}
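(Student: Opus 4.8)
The plan is to compare the two groups through a single tower of finite-index normal subgroups, using the dictionary between finite-index subgroups of $\Gamma$ and of $\widehat\Gamma$ supplied by Proposition~\ref{correspondence}. Fix an exhausting chain $\widehat\Gamma = U_1 > U_2 > \cdots$ of finite-index normal subgroups of $\widehat\Gamma$ with $\bigcap_m U_m = 1$; Corollary~\ref{c:exhaust} guarantees such a chain exists (e.g.\ take $U_m = \overline{M_m}$). Put $\Gamma_m = U_m \cap \Gamma$ and $\Lambda_m = U_m \cap \Lambda$, which are finite-index normal subgroups of $\Gamma$ and $\Lambda$ respectively, of index $|\widehat\Gamma : U_m|$ in each case (here using that $\Lambda$ is dense in $\widehat\Gamma$, so $\Lambda U_m = \widehat\Gamma$ and $|\Lambda : \Lambda_m| = |\widehat\Gamma : U_m|$; likewise for $\Gamma$). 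Since $\bigcap_m U_m = 1$, both $\bigcap_m \Gamma_m = 1$ and $\bigcap_m \Lambda_m = 1$, so both chains are admissible for L\"uck's Approximation Theorem~\ref{luck}.

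The next step is the key inequality at each finite level: $b_1(\Gamma_m) \le b_1(\Lambda_m)$. This is essentially Lemma~\ref{l:denseb1} applied one level down, so I would first check that $\Lambda_m$ embeds as a dense subgroup of $\widehat{\Gamma_m}$. Indeed $\overline{\Lambda_m}$ (closure in $\widehat\Gamma$) equals $U_m$ by the density of $\Lambda$, and similarly $\overline{\Gamma_m} = U_m$; by Corollary~\ref{c1} the natural maps $\widehat{\Lambda_m} \to \overline{\Lambda_m} = U_m$ and $\widehat{\Gamma_m}\to U_m$ are isomorphisms (finite-index subgroups always induce the full profinite topology). Composing, we get that $\Lambda_m$ is (isomorphic to) a dense subgroup of $\widehat{\Gamma_m}$, and Lemma~\ref{l:denseb1} yields $b_1(\Gamma_m) \le b_1(\Lambda_m)$.

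Now divide by the common index and pass to the limit. For each $m$,
$$\frac{b_1(\Gamma_m)}{|\Gamma : \Gamma_m|} \;=\; \frac{b_1(\Gamma_m)}{|\widehat\Gamma : U_m|} \;\le\; \frac{b_1(\Lambda_m)}{|\widehat\Gamma : U_m|} \;=\; \frac{b_1(\Lambda_m)}{|\Lambda : \Lambda_m|}.$$
Taking $m \to \infty$, the left-hand side converges to $b_1^{(2)}(\Gamma)$ and the right-hand side to $b_1^{(2)}(\Lambda)$ by Theorem~\ref{luck}, giving $b_1^{(2)}(\Gamma) \le b_1^{(2)}(\Lambda)$ as required. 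Note that finite presentability of both $\Gamma$ and $\Lambda$ is exactly what is needed to invoke L\"uck's theorem on each side.

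\textbf{Main obstacle.} The routine-looking but genuinely load-bearing point is the bookkeeping of indices: one must be sure that the \emph{same} subgroup $U_m$ of $\widehat\Gamma$ cuts down to subgroups of equal index $|\widehat\Gamma : U_m|$ in both $\Gamma$ and $\Lambda$, and that the resulting chains both intersect trivially. This relies on $\Lambda$ being \emph{dense} in $\widehat\Gamma$ (not merely mapping to it), so that $\Lambda U_m = \widehat\Gamma$ and the restriction map identifies $\Lambda/\Lambda_m$ with $\widehat\Gamma/U_m$ — and on Corollary~\ref{c1} to identify $\widehat{\Lambda_m}$ with $U_m$ so that Lemma~\ref{l:denseb1} can be applied at level $m$. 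Once these identifications are in place, the limit argument is immediate.
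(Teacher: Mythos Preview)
Your argument is essentially the paper's own proof: build parallel normal chains in $\Gamma$ and $\Lambda$ by intersecting with a single descending chain of open normal subgroups $U_m\lhd\widehat\Gamma$ with trivial intersection, use density to match indices, apply Lemma~\ref{l:denseb1} at each level, and pass to the limit via L\"uck's theorem. All of this is correct.

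One small overclaim: you assert that Corollary~\ref{c1} gives an isomorphism $\widehat{\Lambda_m}\to\overline{\Lambda_m}=U_m$. Corollary~\ref{c1} identifies $\widehat{\Lambda_m}$ with the closure of $\Lambda_m$ in $\widehat{\Lambda}$, not in $\widehat{\Gamma}$; since the hypothesis only says $\Lambda$ is dense in $\widehat{\Gamma}$ (not that $\widehat\Lambda\cong\widehat\Gamma$), the map $\widehat{\Lambda_m}\to U_m$ need only be surjective, not an isomorphism. Fortunately this claim is not load-bearing: all you need for Lemma~\ref{l:denseb1} is that $\Lambda_m$ is dense in $U_m\cong\widehat{\Gamma_m}$, which you have already established from the density of $\Lambda$ in $\widehat\Gamma$ and the openness of $U_m$. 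Drop the unnecessary assertion about $\widehat{\Lambda_m}$ and the proof stands.
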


\begin{proof} For each positive integer $d$ let $M_d$ be the
  intersection of all normal subgroups of index at most $d$ in $\G$,
  and let $L_d = \Lambda\cap\overline M_d$ in $\widehat{\Gamma}$.  We saw
  in Corollary~\ref{c:exhaust} that $\bigcap_d \overline M_d = 1$,
  and so $\bigcap_d L_d=1$.  Since $\Lambda$ and $\G$ are both dense in
  $\widehat{\Gamma}$, the restriction of $\widehat{\Gamma}\to
  \widehat{\Gamma}/\overline M_d$ to each of these subgroups is
  surjective, and hence
$$|\Lambda : L_d| = |\widehat{\Gamma}:\overline M_d| = |\G : M_d|.$$

Now $L_d$ is dense in $\overline M_d$, while $\widehat{M}_d = \overline M_d$,
so Lemma~\ref{l:denseb1} implies that $b_1(L_d) \ge b_1(M_d)$, 
and then we can use the towers $(L_d)$ in $\Lambda$ and $(M_d)$ in $\G$ to compare
$L^2$-betti numbers and find 
$$
b_1^{(2)}(\G) = \lim\limits_{d\rightarrow \infty}  \frac{b_1(M_d)}{|\G:M_d|} \le  
\lim\limits_{d\rightarrow \infty} \frac{b_1(L_d)}{|\Lambda:L_d|} = b_1^{(2)}(\Lambda), 
$$
by L\"uck's approximation theorem. 
\end{proof}
 
This has the following important consequence:

\begin{corollary}\label{c:sameL2}
Let $\G_1$ and $\G_2$ be finitely-presented residually finite 
groups. If $\widehat{\Gamma}_1\cong\widehat{\Gamma}_2$, then 
$b_1^{(2)}(\G_1) = b_1^{(2)}(\G_2) $.
\end{corollary}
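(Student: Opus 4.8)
The plan is to derive Corollary~\ref{c:sameL2} directly from Proposition~\ref{dense}, using the isomorphism $\widehat{\Gamma}_1\cong\widehat{\Gamma}_2$ to realize each $\G_i$ as a dense subgroup of the profinite completion of the other. Concretely, let $\phi\colon\widehat{\Gamma}_1\to\widehat{\Gamma}_2$ be an isomorphism. The natural map $\iota_1\colon\G_1\to\widehat{\Gamma}_1$ has dense image (this holds for every finitely-generated group, residually finite or not, as recalled at the start of Section~2), so $\phi\circ\iota_1\colon\G_1\to\widehat{\Gamma}_2$ has dense image as well; that is, $\G_1$ is isomorphic to a dense subgroup of $\widehat{\Gamma}_2$. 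Applying Proposition~\ref{dense} with $\G=\G_2$ and $\Lambda=\G_1$ gives $b_1^{(2)}(\G_2)\le b_1^{(2)}(\G_1)$. Running the same argument with $\phi^{-1}$ in place of $\phi$ (so that $\G_2$ is a dense subgroup of $\widehat{\Gamma}_1$) yields $b_1^{(2)}(\G_1)\le b_1^{(2)}(\G_2)$, and the two inequalities together give equality.

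There is essentially nothing hard here: the only small points to check are that Proposition~\ref{dense} applies, which requires both groups to be finitely-presented and residually finite — exactly the standing hypothesis of the corollary — and that a composition of an isomorphism of topological (profinite) groups with a dense embedding is again a dense embedding, which is immediate since images of dense sets under continuous surjections are dense, and an isomorphism of profinite groups is in particular a homeomorphism. By the Nikolov–Segal theorem invoked in Lemma~\ref{NS}, we need not even worry about whether $\phi$ is continuous: any abstract isomorphism $\widehat{\Gamma}_1\cong\widehat{\Gamma}_2$ is automatically an isomorphism of topological groups, so the denseness transfers without comment.

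If one prefers a proof that does not single out a direction, one can instead note that Proposition~\ref{dense} is symmetric in disguise: identifying $\widehat{\Gamma}_1$ with $\widehat{\Gamma}_2$ via $\phi$, both $\G_1$ and $\G_2$ sit inside this single profinite group $\widehat{\Gamma}$ as dense subgroups, and Proposition~\ref{dense} applied to the pair $(\G,\Lambda)=(\G_1,\G_2)$ and then to $(\G,\Lambda)=(\G_2,\G_1)$ sandwiches $b_1^{(2)}(\G_1)=b_1^{(2)}(\G_2)$. I expect the write-up to be just two or three sentences; the real content has already been done in Proposition~\ref{dense}, whose proof in turn rests on L\"uck's Approximation Theorem (Theorem~\ref{luck}) and the index-matching of the characteristic towers $(\overline M_d)$ supplied by Corollary~\ref{c:exhaust}. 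The only conceivable obstacle — ensuring the finite-index subgroups one compares actually form L\"uck-admissible towers (normal, finite index, intersecting trivially) on both sides — is already dispatched inside Proposition~\ref{dense}, so no further work is required.
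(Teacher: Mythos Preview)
Your argument is correct and is exactly the intended one: the paper presents the corollary as an immediate consequence of Proposition~\ref{dense}, obtained by applying that inequality in both directions once $\G_1$ and $\G_2$ are viewed (via the isomorphism $\widehat{\Gamma}_1\cong\widehat{\Gamma}_2$ and residual finiteness) as dense subgroups of each other's profinite completions. The extra commentary you include about Nikolov--Segal and the L\"uck towers is accurate but unnecessary for the write-up.
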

 
\begin{remark} In \cite{BRe} the first and third authors prove
versions
of Proposition \ref{dense} and Corollary \ref{c:sameL2} for finitely-presented
groups which are residually-$p$ for some prime $p$.  In this
setting the profinite completion is replaced by the pro-$p$
completion.\end{remark}
When we make further use of $L^2$-betti numbers, we will exploit the following additional 
elementary observation: 

\begin{proposition}\label{p:b1pos}
If $\G$ is a lattice in ${\rm{PSL}}(2,\R)$ with rational Euler characteristic
$\chi(\G)$, then $b_1^{(2)}(\G) = -\chi(\G)$.
\end{proposition}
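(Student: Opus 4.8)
The plan is to use L\"uck's Approximation Theorem (Theorem~\ref{luck}) to compute $b_1^{(2)}(\G)$ as the limit of normalised first Betti numbers along a suitable tower, and to match this against $-\chi(\G)$ using the multiplicativity of the Euler characteristic under finite-index subgroups together with the fact that for a finite-index torsion-free subgroup the Euler characteristic and the Betti numbers obey a very simple relation.

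First I would pass to a torsion-free normal subgroup $K<\G$ of finite index; such a subgroup exists because $\G$ is a finitely-generated Fuchsian group (hence virtually a surface group or virtually free, and in any case virtually torsion-free). If $\G$ is a lattice in $\PSL(2,\R)$ that is virtually free (the non-cocompact case), then $K$ is free of some rank $r$, so $\chi(K)=1-r$ and $b_1(K)=r$, whence $b_1(K)=-\chi(K)+1$; if $\G$ is cocompact then $K$ is the fundamental group of a closed orientable surface of genus $g\ge 2$, so $\chi(K)=2-2g$ and $b_1(K)=2g$, again giving $b_1(K)=-\chi(K)+2$. In both cases the correction term is the constant $b_0(K)+b_2(K)$ (interpreting $b_2=0$ in the free case), which is bounded independent of the tower.

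Next I would take a chain $K=K_1>K_2>\cdots$ of finite-index normal subgroups of $\G$ (equivalently, of $K$, after intersecting with $K$) with $\bigcap_m K_m=1$; since every $K_m$ is itself torsion-free Fuchsian, the relation $b_1(K_m)=-\chi(K_m)+c$ holds with $c\le 2$ a fixed constant. Dividing by $|\G:K_m|$ and using multiplicativity $\chi(K_m)=|\G:K_m|\,\chi(\G)$ for the rational Euler characteristic, we get
$$
\frac{b_1(K_m)}{|\G:K_m|} = -\chi(\G) + \frac{c}{|\G:K_m|}.
$$
Letting $m\to\infty$, the indices $|\G:K_m|$ tend to infinity (as $\bigcap K_m=1$ in the infinite group $\G$), so the error term vanishes and L\"uck's theorem yields $b_1^{(2)}(\G)=-\chi(\G)$. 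One should note that L\"uck's Approximation Theorem as quoted applies to $\G$ itself (it is finitely presented), and the tower $(K_m)$ is a tower in $\G$, so the formula is applied directly to $\G$, not merely to $K$.

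The only mildly delicate point — and the one I would be most careful about — is the bookkeeping of the constant: one must confirm that the torsion in $\G$ does not interfere, which is exactly why one passes to the torsion-free subgroup $K$ before invoking the elementary Betti-number/Euler-characteristic identity, and then observes that $\chi$ is multiplicative on the whole chain inside $\G$ (including $\G$ itself with its rational Euler characteristic). Since the correction term is $O(1/|\G:K_m|)$ it disappears in the limit regardless of its precise value, so no careful computation of $c$ is actually needed; this is why the proof is short.
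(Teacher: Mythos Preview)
Your proof is correct and follows essentially the same approach as the paper: pass to a torsion-free finite-index subgroup (free or surface), use the explicit relation $b_1=-\chi+c$ with $c\in\{1,2\}$ along a tower, divide by the index, and invoke L\"uck's theorem to kill the $O(1/\hbox{index})$ correction. The only cosmetic difference is that the paper first invokes multiplicativity of $b_1^{(2)}$ under finite index to reduce to the torsion-free case and then runs the tower there, whereas you run the tower directly in $\G$ using torsion-free normal subgroups; the computations are identical.
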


\begin{proof} It follows from L\"uck's approximation theorem that  if $H$ is a subgroup of index 
index $d$ in $\G$ (which is finitely-presented) then $b_1^{(2)}(H)=d\ b_1^{(2)}(\G)$. 
Rational Euler characteristic is multiplicative in  the same sense. 
Thus we may pass to a torsion-free subgroup of finite index in $\G$, 
and assume that it is either a free group $F_r$ of rank $r$, 
or the fundamental group $\Sigma_g$ of a closed orientable surface of genus $g$. 
In both cases, a subgroup $\G_d$ of index $d$ in $\G$ will be a group of the same form,
with the rank and genus given by the expressions $d(r-1) +1$ and $d(g-1)+1$. 
In the free case, the (ordinary) first betti number is $d(r-1) +1$ and so $b_1(\G_d) = 1- d\ \chi(\G)$, 
while in the second case the first betti number is $2d(g-1)+2$ and so $b_1(\G_d)= 2 - d\ \chi(\G)$.
In both cases, dividing by $d=|\G:\G_d|$ and taking the limit, we find $b_1^{(2)}(\G) = -\chi(\G)$. 
\end{proof}

We shall now present a reduction of Theorem~\ref{main1} using Corollary~\ref{c:sameL2} 
and Proposition~\ref{p:b1pos}.  The proof of Theorem~\ref{main1} will be completed in Sections 5 and 6.

We draw the reader's attention to the fact  that in Corollary~\ref{c:sameL2}
both groups are assumed to be residually finite, whereas in the following theorem
$\Lambda$ need not be residually finite since we have not assumed
that the Lie group $G$ is linear.  The proof 
shows how to by-pass this problem.

\begin{theorem}
\label{patch}
Let $\Lambda$ be a lattice in a connected Lie group $G$ and suppose that
$\wh\Lambda\cong\wh\G$ for some Fuchsian group $\G$. Then, either $\Lambda$ is
isomorphic to a Fuchsian group, or else $\wh{\Lambda}$ has a 
non-trivial finite normal subgroup.
\end{theorem}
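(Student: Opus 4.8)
The plan is to combine the structure theory of lattices in connected Lie groups with the fact that the first $L^2$-Betti number passes to the profinite completion, the main technical wrinkle being that $\Lambda$ is not assumed to be residually finite. First I would show that $b_1^{(2)}(\Lambda)>0$. Although $\Lambda$ need not embed in $\wh\Lambda$, its image is dense, and $\Lambda$ is finitely presented (every lattice in a connected Lie group is). Following the method of Proposition~\ref{dense}, for each $d$ let $\overline M_d\le\wh\Lambda\cong\wh\G$ be the intersection of the normal subgroups of index at most $d$ (Corollary~\ref{c:exhaust}), and let $W_d$ be its preimage in $\Lambda$; then $W_d$ is normal of finite index, $|\Lambda:W_d|=|\G:M_d|$, and the image of $W_d$ in $\wh\Lambda$ is a dense subgroup of the open subgroup $\overline M_d\cong\wh{M_d}$ (Corollary~\ref{c1}). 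Since a surjection induces a surjection on $H_1(-;\Q)$, $b_1(W_d)$ is at least the first Betti number of that image, which by Lemma~\ref{l:denseb1} is at least $b_1(M_d)$. Feeding $b_1(W_d)\ge b_1(M_d)$ into L\"uck's Approximation Theorem~\ref{luck} gives
$$b_1^{(2)}(\Lambda)=\lim_{d\to\infty}\frac{b_1(W_d)}{|\Lambda:W_d|}\ \ge\ \lim_{d\to\infty}\frac{b_1(M_d)}{|\G:M_d|}=b_1^{(2)}(\G)=-\chi(\G)>0,$$
using Proposition~\ref{p:b1pos} and that $\chi(\G)<0$ for a non-elementary Fuchsian group. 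In particular, by the theorem of Cheeger and Gromov, $\Lambda$ has no infinite amenable normal subgroup.

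Next I would invoke the structure theory of lattices in connected Lie groups (Mostow, Raghunathan). If $R$ is the solvable radical of $G$, then $\Lambda\cap R$ is a lattice in $R$, hence polycyclic; it is a normal amenable subgroup of $\Lambda$, so by the previous paragraph it is finite, and as $R$ is connected this forces $R$ to be a compact torus. Then $\Lambda R/R$ is a lattice in the semisimple group $G/R$; applying the same reasoning in turn to the compact factors of $G/R$ and then to the (discrete) centre of the resulting semisimple group --- at each stage the relevant normal subgroup of the appropriate quotient of $\Lambda$ is amenable, hence finite --- one obtains a finite normal subgroup $N$ of $\Lambda$ such that $\Lambda_0:=\Lambda/N$ is isomorphic to a lattice in a connected semisimple Lie group $S$ with trivial centre and no compact factors. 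Since $b_1^{(2)}(\Lambda)=|N|^{-1}b_1^{(2)}(\Lambda_0)$, we get $b_1^{(2)}(\Lambda_0)>0$; but the $L^2$-Betti numbers of a lattice in such an $S$ vanish outside the middle dimension $\frac12\dim(S/K)$, so this forces $\dim(S/K)=2$, i.e.\ $S\cong\PSL(2,\R)$. Hence $\Lambda_0$ is a finitely generated Fuchsian group.

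It remains to deal with $N$. If $N=1$ then $\Lambda\cong\Lambda_0$ is Fuchsian. If $N\ne1$, then the image $\overline N$ of $N$ in $\wh\Lambda$ is a finite subgroup, and it is normal because $N$ is normal in $\Lambda$ and $\Lambda$ is dense in $\wh\Lambda$. If $\overline N\ne1$ we are in the second alternative of the statement. The remaining possibility $\overline N=1$ cannot occur, because a finite-by-Fuchsian group is residually finite: after passing to a finite-index subgroup one may assume the extension $1\to N\to\Lambda_1\to\Delta\to1$ is central with $\Delta$ a torsion-free Fuchsian group, and since $\Delta$ is good (Theorem~\ref{good}) the class of this extension in $H^2(\Delta;N)\cong H^2(\wh\Delta;N)$ is pulled back from a finite quotient of $\Delta$; this yields a finite quotient of $\Lambda_1$, and hence of $\Lambda$, in which $N$ survives, so that $\overline N\cong N\ne1$.

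The step I expect to be the main obstacle is the interface between the abstract Lie-theoretic description of $\Lambda$ and its profinite completion: one must (i) recover the $L^2$-Betti estimate \emph{without} residual finiteness of $\Lambda$, by working with the finite-index subgroups $W_d$ rather than with $\Lambda$ itself, and (ii) verify, stage by stage, that the amenable contributions coming from the radical, the compact factors and the centre collapse into a single finite normal subgroup $N$, and that finite-by-Fuchsian groups are residually finite. The remaining ingredients --- vanishing of $b_1^{(2)}$ in the presence of an infinite amenable normal subgroup, concentration of the $L^2$-Betti numbers of a semisimple lattice in the middle dimension, and the behaviour of $b_1^{(2)}$ under a finite normal subgroup --- are standard.
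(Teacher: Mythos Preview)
Your overall architecture is close to the paper's, and the use of Cheeger--Gromov together with the Lie-theoretic decomposition is exactly the intended mechanism. But there is a genuine gap in your first paragraph, precisely at the point you flag as the ``main technical wrinkle''.

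You acknowledge that $\Lambda$ need not embed in $\wh\Lambda$, yet you then apply L\"uck's Approximation Theorem (Theorem~\ref{luck}) to the tower $(W_d)$ in $\Lambda$ and write $b_1^{(2)}(\Lambda)=\lim_d b_1(W_d)/|\Lambda:W_d|$. That theorem, as stated, requires $\bigcap_d W_d=\{1\}$, whereas here $\bigcap_d W_d=\ker(\Lambda\to\wh\Lambda)=:I$, which may be nontrivial. So the displayed equality is unjustified, and without it you cannot conclude $b_1^{(2)}(\Lambda)>0$; hence the Cheeger--Gromov step (``no infinite amenable normal subgroup'') is not available for $\Lambda$ itself. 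This is not a cosmetic issue: the whole structural reduction in your second paragraph rests on it, and you cannot close the circle by invoking your later proof that $\Lambda$ is residually finite, since that proof presupposes the finite-by-Fuchsian structure you are trying to establish.

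The paper's fix is to bypass $\Lambda$ entirely and work in the residually finite image $\Delta:=\Lambda/I$, which has $\wh\Delta\cong\wh\Lambda\cong\wh\G$. The key extra input is that $G/Z(G)$ is linear (adjoint representation), so $\Lambda/(\Lambda\cap Z(G))$ is residually finite and hence $I\subseteq Z:=\Lambda\cap Z(G)$; this pins $I$ down inside a central subgroup and lets one run the $L^2$-Betti/Cheeger--Gromov argument on $\Delta$ (and on $\Lambda/Z$) rather than on $\Lambda$. The Levi-type decomposition is then carried out for $\Lambda/Z$, following Lott, ending with a lattice in $\PSL(2,\R)$. Your goodness argument that finite-by-Fuchsian groups are residually finite is correct and pleasant, but the paper does not need it: since it works in $\Delta$ from the outset, any nontrivial finite normal subgroup of $\Delta$ automatically survives (and stays normal) in $\wh\Delta$.

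In short: your inequality $b_1(W_d)\ge b_1(M_d)$ is fine, but you must feed it into a tower with trivial intersection. Replace $\Lambda$ by $\Delta=\Lambda/I$ at the outset, note that $I$ is central (via linearity of $G/Z(G)$), and run your argument there; then the rest of your outline goes through and is essentially the paper's proof.
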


\begin{proof} 
  Let $Z(G)$ denote the centre of $G$ and let $Z=\Lambda\cap Z(G)$. As
  $Z(G)$ is the kernel of the adjoint representation, $G/Z(G)$ is
  linear. In addition, since $\Lambda$ is a lattice in a connected Lie group
it is finitely generated (see \cite{rag} \S 6.18), 
and it follows that $\Lambda/Z$ is a finitely generated residually
finite group. In particular, $Z$ contains the kernel of the natural map
  $\Lambda\to\wh\Lambda$, which we call $I$. We first claim that either $Z=I$ or
  else $\Delta:=\Lambda/I$ has a non-trivial
finite normal subgroup; at the end of this
proof we shall show that such a subgroup remains normal in
  $\wh\Delta$, which is isomorphic to $\wh\Lambda$ because $\Delta$ is the
  image of $\Lambda$ in $\wh\Lambda$.  The only other possibility is that $Z/I$
  is an infinite central subgroup of the residually finite group
  $\Delta$. But $\Delta$ cannot have an infinite normal amenable
  subgroup, because Cheeger-Gromov \cite{CG} prove that this would 
  force  the first
  $L^2$-betti number of $\Delta$ to be zero, contradicting
Corollary \ref{c:sameL2} and Proposition \ref{p:b1pos}, 
since $\wh\Delta\cong\wh\G$.

Now we deal with the case $I=Z$, adapting an argument of Lott
\cite{Lo}.  Let ${\rm{Rad}}$ be the radical of $G$, let $K$ be the
maximal compact connected normal subgroup of a Levi complement, let
$G_1 = {\rm{Rad}}\cdot K$ and recall that $G_2 = G/G_1$ is a connected
semisimple Lie group with no compact connected normal subgroup.  The
short exact sequence
 $$
 1\to G_1\to G\to G_2\to 1
 $$
 restricts to 
 $$
 1\to \Lambda_1\to \Lambda\to \Lambda_2\to 1,
 $$
 where $\Lambda_1=\Lambda\cap G_1$, and factoring out  $Z$ (and its image $Z'$ in $\Lambda_2$) we get
  $$
 1\to \Lambda_1Z/Z\to \Lambda/Z\to \Lambda_2/Z'\to 1.
 $$
 By repeating the argument with $L^2$-betti numbers, we see that $\Lambda_1Z/Z$ must be finite,
as it is amenable (since $\Lambda_1$ is a closed subgroup of the amenable $G_1)$
 and  $\Delta = \Lambda/Z$ has the same profinite completion as the
 Fuchsian group $\Gamma$. So either we again obtain a finite
 non-trivial normal subgroup in $\Delta$, or else $\Lambda_1Z/Z$ is
 trivial, in which case $\Delta\cong \Lambda_2/Z'$. Now we appeal to the
 fact that since $\Lambda_2$ is a lattice in a {\em semisimple} Lie group
 with no compact factors, $\Lambda_2$ modulo its centre $Z(\Lambda_2)$ is itself
 a lattice in a semisimple Lie group (see \cite{rag} Corollary
 8.27). If $Z'=Z(\Lambda_2)$, then $\Delta = \Lambda_2/Z(\Lambda_2)$ and we are in
 the setting considered by Lott (\cite{Lo}, Lemma 1), who proves
 that either $\Delta$ has a non-trivial finite normal subgroup or else
 $\Delta$ is isomorphic to a Fuchsian group. If $Z'$ is a proper
 subgroup of $Z(\Lambda_2)$ then $\Delta$ has a non-trivial centre and we
 conclude as before.
 
 \medskip
 
 All that remains is to prove that if $M<\Delta$ is a non-trivial
 normal subgroup, then $M$ remains normal in $\wh\Delta$.  Indeed for
 any group $H$, if $N$ is a finite normal subgroup of $H$, then the
 image of $N$ in $\wh{H}$, which we denote $\overline{N}$, is also normal.
 For if it were not normal then there would be
 $n\in \overline{N}$ and $x\in \wh{H}$ such that 
 the set $S=\{xnx^{-1}n' \mid n'\in\overline{ N}\}$ did not contain the identity.
 By residual finiteness, the finite set $S\cup\{1\}$  would inject into some
 finite quotient of $\widehat{H}$. But $H$ maps onto this finite quotient, and $N$ is normal
 in $H$. This contradiction shows that $\overline{N}$  is normal in
 $\wh{H}$, as claimed.\end{proof}

\begin{remark} In Section 5 we shall conclude that the second possibility
in Theorem \ref{patch} does not occur, thereby reducing the proof of 
Theorem \ref{main1} to Fuchsian groups.\end{remark}

\section{Three Obstructions to Profinite Freeness}\label{s:FvS}

In this section we present three different proofs of the fact that the
fundamental group of a closed surface cannot have the same set of finite quotients
as a free group. 
Each proof highlights a  different obstruction to having the same profinite completion as a free group,
and thereby sheds light on the fundamental question of whether a 
finitely-generated residually finite group that has the same profinite completion as a free
group must itself be free.

\subsection{The Hopf Property and Rank}

It is well-known and easy to prove that if a finitely-generated group $\G$ 
is residually finite, then it has the Hopf property
--- that is, every epimorphism $\G\to\G$ is an isomorphism. Certain
proofs of this fact extend in a straightforward manner to finitely-generated profinite
groups. The following lemma captures this idea in more pedestrian language.

\begin{lemma}\label{l:hopf} Let $\phi:\G_1\to\G_2$ be an epimorphism of finitely-generated 
groups. If $\G_1$ is residually finite and $\widehat{\Gamma}_1\cong\widehat{\Gamma}_2$, then
$\phi$ is an isomorphism.
\end{lemma}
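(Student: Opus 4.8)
The plan is to exploit the counting technique afforded by Corollary \ref{c:sameHom} together with the Hopf property for finitely generated residually finite groups. First I would fix a finite group $Q$ and compare the three sets $\Epi(\G_1,Q)$, $\Epi(\G_2,Q)$, and $\Hom(\G_1,Q)$. The epimorphism $\phi:\G_1\to\G_2$ induces an injection $\Epi(\G_2,Q)\hookrightarrow\Epi(\G_1,Q)$ by precomposition, since $\phi$ is surjective. On the other hand, $\widehat{\Gamma}_1\cong\widehat{\Gamma}_2$ gives $|\Epi(\G_1,Q)|=|\Epi(\widehat{\Gamma}_1,Q)|=|\Epi(\widehat{\Gamma}_2,Q)|=|\Epi(\G_2,Q)|$ using Lemma \ref{NS} (the epimorphism part). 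Since both sets are finite, the injection $\Epi(\G_2,Q)\hookrightarrow\Epi(\G_1,Q)$ is a bijection, so \emph{every} epimorphism $\G_1\to Q$ factors through $\phi$.

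Next I would use this to show that $K:=\ker\phi$ lies in every finite-index normal subgroup of $\G_1$. Indeed, given a finite-index normal subgroup $N\lhd\G_1$, the quotient map $\G_1\to\G_1/N=:Q$ is an epimorphism onto a finite group, so by the previous paragraph it factors as $\G_1\xrightarrow{\phi}\G_2\to Q$; this forces $K=\ker\phi\subseteq N$. Therefore $K$ is contained in the intersection of all finite-index normal subgroups of $\G_1$, which is trivial because $\G_1$ is residually finite. Hence $K=1$ and $\phi$ is injective, thus an isomorphism.

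An alternative route, which I would mention as a remark, is to pass to profinite completions directly: $\phi$ induces a continuous epimorphism $\widehat\phi:\widehat{\Gamma}_1\to\widehat{\Gamma}_2\cong\widehat{\Gamma}_1$, and a finitely generated profinite group is Hopfian (a surjective endomorphism of a topologically finitely generated profinite group is an isomorphism, since such a group has only finitely many open subgroups of each index), so $\widehat\phi$ is injective; restricting to $\G_1$ (which embeds in $\widehat{\Gamma}_1$ by residual finiteness) shows $\phi$ is injective. I do not anticipate a serious obstacle here: the only mild subtlety is making sure the counting argument uses epimorphisms rather than all homomorphisms, so that the induced map between $\Epi$ sets is genuinely injective; everything else is a direct application of results already established in Section 2.
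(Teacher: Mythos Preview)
Your proof is correct and follows essentially the same counting strategy as the paper: compare sets of maps to a finite group $Q$ via the injection induced by precomposition with the surjection $\phi$, and use $\widehat{\Gamma}_1\cong\widehat{\Gamma}_2$ to force equality of cardinalities. The paper's version is slightly more direct: it works with $\Hom$ rather than $\Epi$, picks a single $k\in\ker\phi$, and observes that any $f:\G_1\to Q$ with $f(k)\neq 1$ cannot lie in the image of $\Hom(\G_2,Q)\hookrightarrow\Hom(\G_1,Q)$, contradicting Corollary~\ref{c:sameHom}.

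One small correction to your closing remark: there is no need to restrict to $\Epi$ in order to get injectivity. Precomposition with a \emph{surjection} $\phi$ is already injective on all of $\Hom(\G_2,Q)$, since $g_1\circ\phi=g_2\circ\phi$ and $\phi$ onto force $g_1=g_2$. So the paper's use of $\Hom$ (and Corollary~\ref{c:sameHom}) is perfectly fine; your $\Epi$ version works too, but it is not required for the argument to go through.
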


\begin{proof} Let $k\in\ker\,\phi$. If $k$ were non-trivial, then since $\G_1$ is residually
finite, there would be a finite group $Q$ and an epimorphism $f:\G_1\to Q$ such that $f(k)\neq 1$.
This map $f$ does not lie in the image of the injection $\Hom(\G_2,Q)\hookrightarrow
\Hom(\G_1,Q)$ defined by $g\mapsto g\circ\phi$. Thus $|\Hom(\G_1,Q)| > |\Hom(\G_2,Q)|$,
contradicting Corollary~\ref{c:sameHom}.
\end{proof}

\begin{definition} The {\em{rank}} $d(\G)$ of a finitely-generated group $\G$ is the least integer
 $k$ such that $\G$ has a generating set of cardinality $k$. The {\em{rank}} $\widehat d(G)$ of a 
 profinite group $G$ is the  least integer $k$ for which there is a
 subset $S\subset G$ with $k=|S|$ and $\<S\>$ is dense in $G$.
\end{definition}

In the following proposition, we do not assume that $\G$ is residually finite.

\begin{proposition}\label{p:epic} Let $\G$ be a finitely-generated group and let $F_n$
be a free group. If $\G$ has a finite
quotient $Q$ such that $d(\G)=d(Q)$, and $\widehat{\Gamma}\cong\widehat F_n$, then $\G\cong F_n$.
\end{proposition}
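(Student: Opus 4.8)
The plan is to combine the rank information with the Hopfian-type rigidity provided by Lemma~\ref{l:hopf}. Since $\G$ is finitely generated with $d(\G)=k$, fix a generating set $x_1,\dots,x_k$ of $\G$ and let $F_k$ be free of rank $k$; the map $x_i\mapsto x_i$ gives an epimorphism $\pi:F_k\to\G$. The heart of the matter is to identify $k$: I would argue that $k=n$. Indeed, $\widehat\G\cong\widehat{F_n}$, so by Lemma~\ref{NS} (or Corollary~\ref{c:sameHom}) $\G$ and $F_n$ have the same finite quotients; in particular the finite quotient $Q$ of $\G$ with $d(Q)=d(\G)=k$ is also a quotient of $F_n$, forcing $n\ge d(Q)=k$. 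Conversely, $F_n$ is a quotient of $\widehat{F_n}\cong\widehat\G$, and composing with $\G\to\widehat\G$ shows $F_n$ is a quotient of $\G$ (the image of $\G$ is dense and $F_n$ is... well, not finite, so one must be slightly careful here); the clean way is: $b_1(\G)=b_1(F_n)=n$ by Corollary~\ref{sameb1}, and every generating set of $\G$ has size at least $b_1(\G)=n$, so $k=d(\G)\ge n$. Hence $k=n$.

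With $k=n$ established, $\pi:F_n\to\G$ is an epimorphism of finitely generated groups; $F_n$ is residually finite, and $\widehat{F_n}\cong\widehat\G$ by hypothesis. Lemma~\ref{l:hopf} (applied with $\G_1=F_n$, $\G_2=\G$, $\phi=\pi$) immediately gives that $\pi$ is an isomorphism, so $\G\cong F_n$, which is the assertion. Note that this argument nowhere uses residual finiteness of $\G$ itself — consistent with the remark preceding the proposition — because the Hopfian input is extracted from the free group via the bijection on $\Hom(-,Q)$ supplied by Nikolov--Segal.

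The step I expect to be the only real obstacle is pinning down that $d(\G)=n$ rather than something smaller: the hypothesis only hands us a quotient $Q$ with $d(\G)=d(Q)$, and one needs $d(Q)=n$, i.e.\ that $Q$ itself requires $n$ generators. This is where $b_1$ enters decisively: since $b_1(\G)=b_1(\widehat\G)=b_1(\widehat{F_n})=b_1(F_n)=n$ by Corollary~\ref{sameb1}, and abelianisation shows $d(H)\ge b_1(H)$ for any finitely generated group $H$, we get $d(\G)\ge n$; on the other hand $\G$, being a quotient of $\widehat{F_n}$ hence of a group generated (topologically) by $n$ elements, and being the image of $\G\hookrightarrow\widehat\G$ — here one uses that $d(\G)=d(Q)$ together with $Q$ being a quotient of $F_n$ (so $d(Q)\le n$) — yields $d(\G)=d(Q)\le n$. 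Thus $d(\G)=n=d(Q)$, the generating set of size $n$ produces $\pi:F_n\twoheadrightarrow\G$, and Lemma~\ref{l:hopf} finishes the proof.
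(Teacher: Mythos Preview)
Your proof is correct and follows essentially the same line as the paper: show that $F_n$ surjects onto $\G$, then invoke Lemma~\ref{l:hopf}. The one point worth noting is that you do more work than necessary in establishing $d(\G)=n$. The paper only uses the inequality $n\ge d(Q)=d(\G)$ (which you also prove, from $Q$ being a finite quotient of $F_n$): once $n\ge d(\G)$, there is already an epimorphism $F_n\to\G$, since $F_s$ surjects onto $\G$ for every $s\ge d(\G)$. Your appeal to Corollary~\ref{sameb1} to get the reverse inequality $d(\G)\ge b_1(\G)=n$ is valid but superfluous---and in fact the equality $d(\G)=n$ falls out \emph{a posteriori} from the isomorphism $\G\cong F_n$.
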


\begin{proof} 
First $\widehat{\Gamma}\cong\widehat F_n$, so $Q$ is a quotient of $F_n$. Hence $n\ge d(Q)$.
But $d(Q)=d(\G)$ and for every integer $s\ge d(\G)$ there exists an epimorphism $F_s\to \G$.
Thus we obtain an epimorphism $F_n\to\G$, and application of the preceding lemma completes the proof.
\end{proof} 

\begin{corollary}\label{c:dAb} Let $\G$ be a finitely-generated group. If $\G$ and its abelianisation
have the same rank, then $\widehat{\Gamma}\cong\widehat F_n$ if and only if $\G\cong F_n$.
\end{corollary}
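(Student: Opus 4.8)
The plan is to obtain Corollary~\ref{c:dAb} as a short consequence of Proposition~\ref{p:epic}. One implication is immediate: if $\G\cong F_n$ then $\wh\G\cong\wh F_n$. For the converse, assume $\wh\G\cong\wh F_n$ and put $r:=d(\G)$; by hypothesis $d(\G^{\ab})=r$ as well. By Proposition~\ref{p:epic}, it then suffices to exhibit a single \emph{finite} quotient $Q$ of $\G$ with $d(Q)=r$.

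First I would reduce to the abelianisation. Since $\G$ surjects onto the finitely-generated abelian group $A:=\G^{\ab}$, every finite quotient of $A$ is also a finite quotient of $\G$; so it is enough to show that $A$ has a finite quotient $Q$ with $d(Q)=d(A)$. Writing $A\cong\Z^{b}\oplus\Z/d_1\oplus\cdots\oplus\Z/d_k$ in invariant-factor form (with $1<d_1\mid d_2\mid\cdots\mid d_k$, so that $d(A)=b+k$), I would take $Q=A$ if $b=0$, and otherwise choose an integer $m\ge 2$ with $d_k\mid m$ (any $m\ge 2$ if $k=0$) and set $Q=(\Z/m)^{b}\oplus\Z/d_1\oplus\cdots\oplus\Z/d_k$. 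Since $d_1\mid\cdots\mid d_k\mid m$, the chain of cyclic summands of $Q$ is already in invariant-factor form, so $d(Q)=b+k=d(A)=r$. Feeding this $Q$ into Proposition~\ref{p:epic} gives $\G\cong F_n$, as desired.

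I do not anticipate a genuine obstacle here: the statement is essentially one invocation of Proposition~\ref{p:epic} together with the structure theorem for finitely-generated abelian groups. The only point needing (minimal) care is the elementary fact that $(\Z/m)^{b}\oplus\Z/d_1\oplus\cdots\oplus\Z/d_k$, with $d_1\mid\cdots\mid d_k\mid m$, cannot be generated by fewer than $b+k$ elements — i.e. that enlarging the cyclic summands does not lower the minimal number of generators — which one sees at once by reducing modulo a prime dividing $d_1$ (or modulo any prime when $k=0$), where the reduction is $(\Z/p)^{b+k}$.
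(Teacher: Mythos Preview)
Your argument is correct and follows exactly the paper's approach: the paper's proof is the single sentence ``Every finitely-generated abelian group $A$ has a finite quotient of rank $d(A)$'', which you have simply unpacked via the structure theorem before invoking Proposition~\ref{p:epic}.
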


\begin{proof} Every finitely-generated abelian group $A$ has a finite quotient of rank $d(A)$.
\end{proof}

Let $K$ be a finite simplicial graph
with vertex set $V=\{v_1,\dots,v_n\}$ and edge set $E\subset V\times V$.  
Then the {\em right angled Artin group} (or {\em RAAG\/})  associated with $K$
is the group $A(K)$ given by the following presentation:
$$
A(K)=
\< \,v_1,\dots,v_n \mid\, [v_i,v_j]=1 \hbox{  for all  } i, j \hbox{ such that }  \{v_i,v_j\}\in E\,\>.
$$
For example, if $K$ is a graph with $n$ vertices and no
edges, then $A(K)$ is the free group of rank $n$, while if $K$ is the complete
graph on $n$ vertices, then $A(K)$ is the free abelian group $\Z^n$ of rank $n$. 

\smallskip

If the group $\G$ has a presentation of the form $\<A \, | \, R\>$ where $A$ is finite and all of the relators
$r\in R$ lie in the commutator subgroup of the free group $F(A)$, then both $\G$ and its 
abelianisation (which is free abelian) have rank $|A|$. 
The standard presentations of RAAGs and closed surface groups are of this form. 
Also the fundamental group of a closed non-orientable surface has a presentation of the
form $\G=\<\,a_1,b_1,\dots,a_g,b_g,c\mid c^2 = \prod_i[a_i,b_i]\,\>$, so again $\G$
has the same rank as its abelianisation.

\begin{proposition}\label{p:artin} If $\G$ is a right-angled Artin group
that is not free, or 
the fundamental group of a closed surface, then there exists no 
free group $F$ such that $\widehat F\cong \widehat{\Gamma}$.
\end{proposition}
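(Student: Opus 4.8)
The plan is to derive the proposition directly from Corollary~\ref{c:dAb}. The essential input has already been recorded in the paragraph immediately preceding the statement: a right-angled Artin group, the fundamental group of a closed orientable surface, and the fundamental group of a closed non-orientable surface each admit a finite presentation $\langle A\mid R\rangle$ for which $\G$ has the same rank as its abelianisation. For a RAAG $A(K)$ and for an orientable surface group this is immediate, since the abelianisation is free abelian and $\G$ is generated by as many elements as the rank of that free abelian group. In the non-orientable case, using the presentation $\langle a_1,b_1,\dots,a_g,b_g,c\mid c^2=\prod_i[a_i,b_i]\rangle$, the abelianisation is $\Z^{2g}\oplus\Z/2$, whose minimal number of generators is $2g+1$ --- as one sees by reducing modulo $2$ --- and this equals $d(\G)$.

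Granting this, the argument is brief. First I would suppose, towards a contradiction, that there is a free group $F$ with $\widehat F\cong\widehat\G$, and write $F=F_n$. Since $\G$ has the same rank as its abelianisation, Corollary~\ref{c:dAb} applies and yields $\G\cong F_n$. Secondly, I would observe that none of the groups in the statement can be free, which furnishes the desired contradiction. Indeed, if $\G=A(K)$ is a non-free RAAG then $K$ contains an edge $\{v_i,v_j\}$, and the retraction of $A(K)$ onto $\langle v_i,v_j\rangle$ that kills every other generator exhibits $\langle v_i,v_j\rangle\cong\Z^2$ as a subgroup of $\G$, whereas no free group contains a copy of $\Z^2$. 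And if $\G$ is a closed surface group of genus at least $1$, then either $\G\cong\Z/2$ (the case of the projective plane), which has torsion, or $\G$ is torsion-free with $\cd(\G)=2$ (being the fundamental group of an aspherical closed surface); in either case $\G$ is not free.

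I do not expect a genuine obstacle: the content of the proposition has been concentrated into Corollary~\ref{c:dAb} together with the elementary presentation-theoretic remarks that precede it. The one point worth a little care is checking that $\G$ really does have the same rank as its abelianisation in the non-orientable case, where the abelianisation is not free abelian; this is settled by the mod-$2$ computation indicated above.
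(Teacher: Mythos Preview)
Your proposal is correct and follows exactly the route the paper intends: the paragraph preceding the proposition establishes that each of these groups has the same rank as its abelianisation, and then Corollary~\ref{c:dAb} forces $\G\cong F_n$, which is impossible since the groups under consideration are visibly not free. The paper leaves the proof implicit, and you have filled in precisely the details (the $\Z^2$ subgroup in a non-free RAAG, the cohomological-dimension argument for surface groups) that make the deduction explicit.
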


\begin{remark} 
Proposition~\ref{p:epic}
 shows that if $\G$ is a finitely-generated, residually finite
group that is not free, but $\widehat{\Gamma}$ is a free profinite group, then $d(\G) > \widehat d(\widehat{\Gamma})$.
In this context, it is worth noting that for an arbitrarily large integer $d$, 
there exist residually finite hyperbolic groups $\G$ such that $d(\G)=d$ but $\widehat d(\widehat{\Gamma})= 2$.
To construct  such groups, one can follow the first steps of the main
construction in \cite{BG}.
For each integer $d\ge 3$ there exists a
 finitely-presented group $Q$ with $d(Q)=d$ such that
$\widehat Q=\{1\}$ and $H_2(Q,\Z)=0$. The
 Rips construction provides  a short exact
sequence $1\to N\to \G\to Q\to 1$ such that  $\G$ satisfies
a strict small-cancellation condition
and $N$ is a 2-generator group. Wise \cite{wiseSC} proves that the hyperbolic
group $\G$ is the fundamental group of a compact non-positively curved cube complex,
and Agol \cite{agol} proves that such groups are linear, hence residually-finite.
It is shown in \cite{BG} that   $\widehat N\cong\widehat{\Gamma}$, 
from which it follows that $\widehat d(\widehat \G)=2$. 
But $\G$ maps onto $Q$, and therefore $d(\G)\ge d$.
\end{remark}

\subsection{The Parity of Virtual Betti Numbers}

\begin{lemma}\label{even} Let $F$ be a free group of rank $r\ge 1$. If $\widehat{\Gamma}\cong\widehat F$, then
for every positive integer $d$ there exists a subgroup $\Lambda$ of index $d$ in $\Gamma$
such that $b_1(\Lambda) = d(r-1)+1$. In particular, $\Gamma$ has
subgroups of finite index with odd first betti number.
\end{lemma}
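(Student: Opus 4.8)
The plan is to transport a suitable finite-index subgroup of $F$ across the isomorphism $\widehat{\Gamma}\cong\widehat F$ by means of the subgroup correspondence, and then to read off its first betti number from the fact that $b_1$ is a profinite invariant.

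First I would invoke Corollary~\ref{same}: since $\G$ and $F$ are finitely-generated and residually finite with $\widehat{\Gamma}\cong\widehat F$, there is a bijection $I$ from the finite-index subgroups of $\G$ to the finite-index subgroups of $F$ satisfying $|\G:H|=|F:I(H)|$ and $\widehat{I(H)}\cong\widehat H$ for every such $H$. Next, for a given $d\ge 1$, I would exhibit a subgroup $K\le F$ of index exactly $d$ --- for instance the kernel of the composite $F\to\Z\to\Z/d\Z$ (when $r=1$ this is simply $d\Z$). By the Nielsen--Schreier index formula, $K$ is free of rank $1+d(r-1)$, and hence $b_1(K)=d(r-1)+1$.

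Now put $\Lambda=I^{-1}(K)$. Then $|\G:\Lambda|=d$ and $\widehat{\Lambda}\cong\widehat K\cong\widehat F_{d(r-1)+1}$, so Corollary~\ref{sameb1} yields $b_1(\Lambda)=b_1(K)=d(r-1)+1$, which is the first assertion. For the final claim it suffices to pick $d$ with $d(r-1)$ even; the choice $d=2$ works for every $r$ and produces an index-$2$ subgroup $\Lambda\le\G$ with $b_1(\Lambda)=2r-1$, which is odd.

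I do not anticipate a genuine obstacle: the lemma follows directly from machinery already in place. The two points needing care are that the correspondence $I$ must be used in the correct direction --- from subgroups of $\G$ to subgroups of $F$, so that one takes $\Lambda=I^{-1}(K)$ rather than $I(K)$ --- and that, for a finite-index subgroup $H$ of a residually finite group, the natural map $\widehat H\to\overline H$ is an isomorphism (Corollary~\ref{c1}); this last fact is what makes $\widehat{I(K)}$ computable as the closure of $I(K)$ in $\widehat F$, and therefore equal to $\widehat F_{d(r-1)+1}$.
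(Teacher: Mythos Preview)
Your proof is correct and follows essentially the same route as the paper: transport an index-$d$ subgroup of $F$ across the correspondence of Corollary~\ref{same}, use Nielsen--Schreier to compute its rank, and then invoke Corollary~\ref{sameb1} to identify the first betti number of the corresponding subgroup $\Lambda\le\Gamma$. Your added explicitness (constructing $K$ as the kernel of $F\to\Z\to\Z/d\Z$, and taking $d=2$ for the final claim) is harmless and helpful; the paper simply asserts that $F$ has subgroups of every index.
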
 

\begin{proof} 
First, $F$ has subgroups of every possible index, 
and every subgroup of index $d$ is free of rank $d(r-1)+1$. 
Next, by Corollary~\ref{same}, for every subgroup $H < F$ of index $d$ there is a 
subgroup $\Lambda <\Gamma$ of index $d$ such that $\widehat{H} \cong \widehat{\Lambda}$, 
and then by Lemma~\ref{sameb1} we have $b_1(\Lambda)=b_1(H) = d(r-1)+1$. 
\end{proof}

The first betti number of the fundamental group of  a closed orientable surface is even, and 
any subgroup of finite index in such a group is again the fundamental group of  a closed orientable surface.
Thus Lemma~\ref{even} provides us with a second proof that surface groups do not have the same
profinite completions as free groups. This argument can be extended to the fundamental groups of
compact {\em K\"ahler manifolds}. Recall that a
smooth manifold $X$ with a Riemannian metric $g$,
symplectic form $\omega$, and complex structure $J$ is said to be K\"ahler if $g(J(u),J(v))=g(u,v)$ and
$g(J_p(u),v)=\omega(u,v)$ for all $p\in X$ and all tangent vectors $u,v\in T_p(X)$. Although every finitely-presented
group occurs as the fundamental group of a closed symplectic manifold and a closed complex manifold,
the K\"ahler condition imposes significant constraints (see \cite{burger}). For example:

\begin{proposition}\label{p:kahler}
If $\G$ is the fundamental group of a compact K\"ahler manifold, then 
there exists no non-trivial free group $F$ such that $\widehat F\cong\widehat{\Gamma}$.
\end{proposition}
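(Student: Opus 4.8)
The plan is to mimic the proof just given for surface groups, using the parity of first Betti numbers of finite-index subgroups, but now exploiting the Hodge-theoretic constraints on Kähler groups rather than a direct surface-topology argument. The key classical fact is that if $X$ is a compact Kähler manifold then $b_1(X)$ is even; moreover this is a statement about the group $\pi_1(X)$, and it passes to finite covers: any finite-index subgroup $\G_0<\G=\pi_1(X)$ is itself the fundamental group of a compact Kähler manifold (a finite cover of $X$), so $b_1(\G_0)$ is even as well. Thus every finite-index subgroup of a Kähler group has even first Betti number.

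The argument then runs as follows. Suppose for contradiction that $F$ is a non-trivial free group with $\widehat F\cong\widehat{\Gamma}$. If $F$ has rank $r$, then by Lemma~\ref{even} there is, for every positive integer $d$, a subgroup $\Lambda<\G$ of index $d$ with $b_1(\Lambda)=d(r-1)+1$. Choosing $d$ so that $d(r-1)+1$ is odd --- for instance $d=2$ when $r\ge 2$, or any $d$ when $r=1$ (where $d(r-1)+1=1$ is already odd) --- produces a finite-index subgroup of $\G$ with odd first Betti number. But $\G$ is a Kähler group, so by the remarks above every finite-index subgroup has even first Betti number. This contradiction shows no such $F$ exists.

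The main (and really the only) point requiring care is the claim that finite-index subgroups of a Kähler group are again Kähler, which reduces the parity statement for all finite-index subgroups to the single well-known fact about $b_1$ of compact Kähler manifolds. This is standard: a subgroup of index $d$ in $\pi_1(X)$ corresponds to a connected $d$-sheeted covering $\widetilde X\to X$, and the pullback of the Kähler metric makes $\widetilde X$ a compact Kähler manifold with $\pi_1(\widetilde X)$ equal to the given subgroup. After that, the combinatorics (choosing $d$ to make $d(r-1)+1$ odd) is immediate, and the rest is simply an appeal to Lemma~\ref{even} together with the invariance of the first Betti numbers of finite-index subgroups under isomorphism of profinite completions (Corollaries~\ref{same} and~\ref{sameb1}). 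No new profinite-group input beyond what has already been assembled is needed.
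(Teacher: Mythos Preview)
Your proof is correct and is essentially the same as the paper's: both use Hodge theory to see that $b_1$ of a compact K\"ahler manifold is even, observe that finite covers of compact K\"ahler manifolds are again compact K\"ahler (so all finite-index subgroups of $\G$ have even $b_1$), and then invoke Lemma~\ref{even} to obtain a finite-index subgroup with odd $b_1$, a contradiction. Your explicit choice of $d$ is unnecessary since the ``in particular'' clause of Lemma~\ref{even} already delivers a subgroup with odd first Betti number.
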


\begin{proof} Hodge theory implies that the torsion-free rank of the
  first homology of a compact K\"ahler manifold is even (see
  \cite{burger}, p.7), and this is the first betti number of its
  fundamental group $\G$. A finite-sheeted covering of a compact
  K\"ahler manifold is again a compact K\"ahler manifold, so the first
  betti number of any subgroup of finite index in $\G$ is also
  even.  By the preceding lemma, this completes the proof.
\end{proof}

\subsection{Goodness and surface subgroups}

Goodness was defined in Section~\ref{s:good}. By taking $\G=S$ in the
following proposition, we obtain a further proof that a surface group
cannot have the same profinite completion as a free group.  We shall
then extend this result to (conjecturally all) residually free groups.

A group $\G$ is termed a {\em Poincar\'e duality group\/} over a field ${\bf F}$
(or ${\rm{PD}}_d({\bf F})$ for short) if
there is an integer $d$ such that $H^*(\G; {\bf F}) = H_{*-d}(\G; {\bf F})$. The prototypes
for such groups are the fundamental groups of closed aspherical manifolds.
We will write ${\bf F}_p$ for the field with $p$ elements.

\begin{proposition}\label{p:goodS} 
Let  $\G$ be a group that contains a subgroup $S$ that is ${\rm{PD}}_d({\bf F}_p)$
 for some $d>1$ and some prime $p$. 
 If $S$ is good and  $\widehat{S}\to \widehat{\Gamma}$ is injective, then 
 there does not exist a free group $L$ such that
$\widehat{\Gamma}\cong\widehat L$.
\end{proposition}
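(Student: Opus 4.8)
The plan is to derive a contradiction from the hypotheses by comparing cohomological dimensions of closed subgroups of $\widehat\Gamma$. Suppose, for contradiction, that there is a free group $L$ with $\widehat\Gamma\cong\widehat L$. Since $S$ is ${\rm{PD}}_d({\bf F}_p)$ with $d>1$, we have $H^d(S;{\bf F}_p)\cong H_0(S;{\bf F}_p)\cong{\bf F}_p\neq 0$. Because $S$ is good, the natural map $H^d(\widehat S;{\bf F}_p)\to H^d(S;{\bf F}_p)$ is an isomorphism, so $H^d(\widehat S;{\bf F}_p)\neq 0$ and hence $\cd_p(\widehat S)\geq d\geq 2$. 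By hypothesis $\widehat S\to\widehat\Gamma$ is injective, so $\widehat S\cong\overline S$ is a closed subgroup of $\widehat\Gamma\cong\widehat L$. Proposition~\ref{cdbound} then gives $\cd_p(\widehat L)\geq\cd_p(\widehat S)\geq 2$.

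On the other hand, a free group $L$ has cohomological dimension $\leq 1$ over $\Z$, and free groups are good (as noted in Section~\ref{s:good}), so Lemma~\ref{cdim} yields $\cd(\widehat L)\leq 1$, in particular $\cd_p(\widehat L)\leq 1$. (One should check $L$ is nontrivial: if $L$ were trivial then $\widehat\Gamma$ would be trivial, forcing $\widehat S$ trivial and so $H^d(\widehat S;{\bf F}_p)=0$, already a contradiction; and the case of $L$ infinite cyclic is excluded since the paper's free groups are non-elementary, but in any case $\cd_p(\widehat L)\le 1$ suffices.) Comparing the two bounds gives $2\leq\cd_p(\widehat L)\leq 1$, a contradiction. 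Hence no such $L$ exists.

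In fact this is precisely the content of Corollary~\ref{cdboundapply} applied with $\Gamma_1=L$ (good, of cohomological dimension $n=1$), $\Gamma_2=\Gamma$, and $H=S$: that corollary tells us $H^q(S;{\bf F}_p)=0$ for all $q>1$ and all primes $p$, which contradicts $H^d(S;{\bf F}_p)\neq 0$ for $d>1$. So the cleanest write-up is simply to invoke Corollary~\ref{cdboundapply} directly, after recording that $L$ is good with $\cd(L)\leq 1$ and that the ${\rm{PD}}_d({\bf F}_p)$ condition forces $H^d(S;{\bf F}_p)\cong{\bf F}_p$.

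I do not anticipate a serious obstacle here: all the machinery is already assembled in Section~\ref{s:good}, and the argument is a short dimension count. The only minor points to be careful about are (i) confirming that the Poincar\'e duality hypothesis genuinely produces a nonvanishing top cohomology group with ${\bf F}_p$ coefficients (immediate from $H^d(S;{\bf F}_p)\cong H_0(S;{\bf F}_p)$), and (ii) making sure the injectivity hypothesis $\widehat S\hookrightarrow\widehat\Gamma$ is used to identify $\widehat S$ with the closed subgroup $\overline S$ so that Proposition~\ref{cdbound} applies — both of which are routine.
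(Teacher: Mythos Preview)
Your proof is correct and matches the paper's own argument essentially line for line: the paper likewise observes that free groups are good with $\cd\le 1$, invokes Corollary~\ref{cdboundapply} (with $\Gamma_1=L$, $\Gamma_2=\Gamma$, $H=S$) to force $H^q(S;{\bf F}_p)=0$ for $q\ge 2$, and derives the contradiction from $H^d(S;{\bf F}_p)=H_0(S;{\bf F}_p)={\bf F}_p$. Your explicit unpacking of the cohomological-dimension inequalities is just the content of that corollary spelled out, so there is no substantive difference.
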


\begin{proof} Since free groups are good,
$H^q(\widehat L;{\bf F}_p)=0$ for all $q\ge 2$.
Hence if $\widehat{\Gamma}$ were isomorphic to $\widehat L$, then from Corollary~\ref{cdboundapply}
we would have $H^2(H;{\bf F}_p)=0$ for all $q\ge 2$ and all good
subgroups $H<\G$
 with $\widehat{H}\to \widehat{\Gamma}$ injective.
Now $S$ is such a group, but $H^d(S;{\bf F}_p)= H_0(S; {\bf F}_p)={\bf F}_p$.
\end{proof}

As noted earlier, it was established in \cite{GJZ} that surface groups are good. 
The following lemma includes the case where $\G$ itself is a surface group.

\begin{corollary} If $\G$ contains a surface group $S$, and 
$\widehat{S}\to \widehat{\Gamma}$ is injective,
then $\widehat{\Gamma}$ is not isomorphic to the profinite completion of any free group.
\end{corollary}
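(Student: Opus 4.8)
The plan is to reduce this corollary to Proposition~\ref{p:goodS} by verifying that a surface group $S$ satisfies the Poincar\'e duality hypothesis over some $\mathbf{F}_p$. First I would recall that, by the convention established in the introduction, a surface group is the fundamental group of a closed surface of genus at least $1$; such a surface is a closed aspherical $2$-manifold, so $S$ is the fundamental group of a closed aspherical manifold of dimension $2$. For any field $\mathbf{F}$, this gives $H^*(S;\mathbf{F})\cong H_{*-2}(S;\mathbf{F})$, so $S$ is $\mathrm{PD}_2(\mathbf{F}_p)$ for every prime $p$; in particular it is $\mathrm{PD}_d(\mathbf{F}_p)$ with $d=2>1$, as required. (In the non-orientable case one works with $\mathbf{F}_2$-coefficients, where the orientation class is automatically trivial; it is enough to exhibit a single prime $p$ for which the duality holds, and $p=2$ always works.)

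Next I would invoke the fact, cited from \cite{GJZ} in Section~\ref{s:good} and already noted just before the statement, that surface groups are good in the sense of Serre. Combining this with the $\mathrm{PD}_2(\mathbf{F}_p)$ property, the subgroup $S\le\G$ satisfies all the hypotheses of Proposition~\ref{p:goodS}: it is good, it is $\mathrm{PD}_d(\mathbf{F}_p)$ for some $d>1$ and some prime $p$, and by hypothesis the natural map $\widehat S\to\widehat\G$ is injective. Proposition~\ref{p:goodS} then yields directly that there is no free group $L$ with $\widehat\G\cong\widehat L$, which is exactly the assertion of the corollary.

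The only substantive point worth spelling out is why the hypothesis "$\widehat S\to\widehat\G$ is injective'' is exactly what one needs, and why it genuinely needs to be assumed rather than deduced. The statement of the corollary is phrased so as to include the case $\G=S$, in which case the map in question is the identity and injectivity is automatic; but for general $\G$ one cannot drop it, since the profinite topology on $\G$ need not induce the full profinite topology on an arbitrary finitely-generated subgroup (this subtlety is exactly the one flagged in Subsection~2.2, and is why Lemma~\ref{inducefulltop} requires the stronger separability hypothesis). So I would not attempt to prove injectivity here; it is a hypothesis, and the work is done by Corollary~\ref{cdboundapply} inside the proof of Proposition~\ref{p:goodS}, which uses injectivity of $\widehat S\to\widehat\G$ precisely to identify $\widehat S$ with the closed subgroup $\overline S$ of $\widehat\G$ and thereby transport the nonvanishing of $H^2(S;\mathbf{F}_p)$ into a contradiction with $\mathrm{cd}_p(\widehat L)\le 1$. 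In short, there is no real obstacle: the corollary is a direct specialisation of the preceding proposition once one observes that closed surfaces are aspherical and hence their fundamental groups are $2$-dimensional Poincar\'e duality groups over $\mathbf{F}_p$.
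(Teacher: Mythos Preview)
Your proposal is correct and follows exactly the approach intended by the paper: the corollary is an immediate specialisation of Proposition~\ref{p:goodS}, once one observes that surface groups are good (as cited from \cite{GJZ}) and are $\mathrm{PD}_2(\mathbf{F}_p)$ since closed surfaces are aspherical $2$-manifolds. The paper leaves the proof implicit in the sentence preceding the corollary; you have simply spelled out the verification of the hypotheses, including the careful remark that $p=2$ handles any non-orientable case.
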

 
A similar argument shows the following:

\begin{lemma} If $L$ is a non-abelian free group, then  
$\widehat L$ does not contain  the profinite completion of any  
 surface group, nor that of any free abelian group of rank greater than $1$.
\end{lemma}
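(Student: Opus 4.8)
The statement to prove is that if $L$ is a non-abelian free group, then $\widehat L$ contains neither the profinite completion of a surface group nor that of a free abelian group of rank $>1$. The phrase ``contains the profinite completion of $H$'' should be read as: there is a subgroup of $\widehat L$ that is isomorphic (as a profinite group) to $\widehat H$. The plan is to combine the cohomological dimension bound for closed subgroups of profinite groups (Proposition~\ref{cdbound}) with the goodness of free groups, surface groups, and finitely-generated abelian groups, in the same spirit as the proof of Proposition~\ref{p:goodS}.

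First I would record the relevant cohomological dimensions. Since $L$ is free, $\cd(L)=1$, and $L$ is good, so by Lemma~\ref{cdim} we have $\cd(\widehat L)\le 1$; hence $\cd_p(\widehat L)\le 1$ for every prime $p$. On the other side, a closed surface group $S$ (orientable or not, genus $\ge 1$) satisfies $\cd(S)=2$, is good by \cite{GJZ}, and is of type $\mathrm{PD}_2(\mathbf F_p)$, so $H^2(S;\mathbf F_p)\neq 0$ and goodness gives $H^2(\widehat S;\mathbf F_p)\neq 0$, i.e.~$\cd_p(\widehat S)\ge 2$ for some (indeed every) prime $p$. Similarly $\mathbf Z^k$ with $k\ge 2$ is good (it is polycyclic, or one can iterate Lemma~\ref{goodexact}), has $\cd(\mathbf Z^k)=k\ge 2$, and $H^k(\mathbf Z^k;\mathbf F_p)=\mathbf F_p\neq 0$, so $\cd_p(\widehat{\mathbf Z^k})\ge k\ge 2$ for every prime $p$.

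Now suppose, for contradiction, that $\widehat L$ contains a closed subgroup $G$ isomorphic to $\widehat S$ (resp.~to $\widehat{\mathbf Z^k}$ with $k\ge 2$). (If the containment is only as an abstract subgroup, note that by the Nikolov--Segal theorem \cite{NS} the abstract isomorphism $G\cong\widehat S$ is automatically a topological isomorphism, and one can close up $G$ inside $\widehat L$; since $\widehat S$ is already profinite its closure maps onto it, and a dimension-counting argument via Proposition~\ref{cdbound} handles the closure, so I will simply take $G$ closed.) Then Proposition~\ref{cdbound} yields $\cd_p(G)\le\cd_p(\widehat L)\le 1$ for every prime $p$. But from the previous paragraph $\cd_p(G)\ge 2$ for some prime $p$, a contradiction. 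This proves both assertions.

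The main point requiring care — and the ``hard part,'' though it is not deep — is the passage from ``$\widehat L$ contains a subgroup isomorphic to $\widehat H$'' to ``$\widehat L$ contains a \emph{closed} subgroup isomorphic to $\widehat H$,'' so that Proposition~\ref{cdbound} applies. This is exactly the kind of subtlety the paper flags around Lemma~\ref{inducefulltop} and Corollary~\ref{cdboundapply}: a priori an abstract embedding $\widehat H\hookrightarrow\widehat L$ need not have closed image. One clean way around it is to observe that $\widehat H$ is compact, so its continuous injective image in $\widehat L$ is closed; and by Nikolov--Segal any abstract injection between these finitely-generated profinite groups is continuous. Alternatively, one argues directly: the closure $\overline G$ of the image $G$ satisfies $\cd_p(\overline G)\le\cd_p(\widehat L)\le 1$, while $G$ is dense in $\overline G$ and carries a surjection $\overline G\twoheadrightarrow$ (every finite quotient of $G$); then the goodness of $H$ forces the nonvanishing of $H^2(\overline G;\mathbf F_p)$ via the finite quotients of $G\cong\widehat H$, contradicting $\cd_p(\overline G)\le 1$. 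Either route closes the argument; I would present the compactness version as it is shortest.
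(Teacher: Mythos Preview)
Your proof is correct and follows essentially the same approach as the paper: use goodness of $L$ to get $\cd_p(\widehat L)\le 1$, goodness of $S$ and $\mathbf Z^k$ to get $\cd_p(\widehat S),\cd_p(\widehat{\mathbf Z^k})\ge 2$, and then invoke Proposition~\ref{cdbound} for closed subgroups. The paper simply says ``a similar argument'' (pointing back to Proposition~\ref{p:goodS}) and does not spell out the closedness issue; your Nikolov--Segal plus compactness justification for why the image must be closed is a legitimate and clean way to handle this, though in context one would normally read ``contains $\widehat H$'' as meaning a closed subgroup. One small caution: your ``alternative route'' via the closure $\overline G$ is not quite right as written, since density of $G$ in $\overline G$ gives surjections from $G$ onto finite quotients of $\overline G$, not the other way around, so you cannot directly deduce $H^2(\overline G;\mathbf F_p)\ne 0$ from $H^2(G;\mathbf F_p)\ne 0$; but since you do not rely on it, this does not affect the argument.
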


\begin{remark} Note that $\widehat L$ does contain
surface subgroups $S$ of arbitrary large genus (as shown in \cite{BGSS} for example)
and free abelian subgroups of arbitrary rank, but
the natural map $\widehat{S}\to \widehat L$ is never injective. The 
surface subgroup examples of \cite{BGSS} are in fact dense in $\widehat{L}$.
\end{remark} 

\subsection{Word hyperbolic groups} Next we single out a particular
case of an application of Proposition~\ref{p:goodS} that is worth
recording. This result connects to
two well-known open problems about word hyperbolic groups, namely:

\begin{enumerate}
\item[\ (A)]  Does every 1-ended word-hyperbolic group contain a surface 
subgroup?
\item[\ (B)]  Is every word-hyperbolic group residually finite?
\end{enumerate}

The first question, due to Gromov, was motivated by the case of hyperbolic 3-manifolds,
and in this special case the question was settled  recently by Kahn and Markovic \cite{KaMa}. 
Indeed, given \cite{KaMa}, a natural strengthening of (A) above is 
to ask: 
\begin{enumerate}
\item[\ \ (A$^\prime$)]  Does every 1-ended word-hyperbolic group contain a 
quasi-convex surface subgroup?
\end{enumerate}

\begin{theorem}
\label{wordhyp}
Suppose that every 1-ended word-hyperbolic group is residually finite and contains a 
quasi-convex surface subgroup. 
Then there exist no 1-ended 
word-hyperbolic group $\G$ and free group $F$ such that $\wh{\G}\cong\wh{F}$. 
\end{theorem}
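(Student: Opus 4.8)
The plan is to reduce the statement, via the surface-subgroup obstruction of Proposition~\ref{p:goodS}, to a single deep external input: the theorem of Agol, Groves and Manning that residual finiteness of all word-hyperbolic groups forces all of their quasi-convex subgroups to be separable.

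First I would note that the standing hypothesis --- every $1$-ended word-hyperbolic group is residually finite --- in fact implies that \emph{every} word-hyperbolic group is residually finite. Indeed, a word-hyperbolic group with more than one end is either finite, or two-ended (hence virtually cyclic), or splits non-trivially over a finite subgroup and is therefore virtually free (Stallings); in each of these cases residual finiteness is automatic, and the $1$-ended case is exactly the hypothesis. I would then invoke the theorem of Agol--Groves--Manning \cite{AGM}: if every word-hyperbolic group is residually finite, then every quasi-convex subgroup of every word-hyperbolic group is separable (the QCERF property).

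Now suppose, for contradiction, that $\G$ is a $1$-ended word-hyperbolic group, $F$ is a free group, and $\wh{\G}\cong\wh{F}$. By hypothesis $\G$ contains a quasi-convex surface subgroup $S$. Every finite-index subgroup $S_1$ of $S$ is again quasi-convex in $\G$ (a finite-index subgroup of a quasi-convex subgroup is quasi-convex), and hence, by the Agol--Groves--Manning theorem, $\G$ is $S_1$-separable. Since $\G$ and $S$ are finitely generated, Lemma~\ref{inducefulltop} then gives that the canonical map $\wh S\to\wh\G$ is injective. But $S$ is the fundamental group of a closed surface, so it is good (by \cite{GJZ}) and is ${\rm PD}_2({\bf F}_2)$; thus Proposition~\ref{p:goodS}, applied with $d=2$ and $p=2$, shows that $\wh\G$ is not isomorphic to the profinite completion of any free group. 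This contradicts $\wh\G\cong\wh F$, so no such $\G$ and $F$ exist.

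The argument is short once the Agol--Groves--Manning implication is in hand, and that implication --- passing from residual finiteness of all hyperbolic groups to quasi-convex subgroup separability --- is the one genuinely hard ingredient; the rest is bookkeeping with the correspondence between the profinite topology on $\G$ and that on $S$ (Lemma~\ref{inducefulltop}) together with the cohomological obstruction already assembled in Proposition~\ref{p:goodS}. One should perhaps remark that orientability of $S$ plays no role here: any closed surface is ${\bf F}_2$-orientable, so $S$ is ${\rm PD}_2({\bf F}_2)$ whether or not it is orientable, and that is all that Proposition~\ref{p:goodS} requires.
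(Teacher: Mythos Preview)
Your argument follows essentially the same route as the paper's: take a quasi-convex surface subgroup, observe that its finite-index subgroups are again quasi-convex, invoke \cite{AGM} to get separability of all of these, apply Lemma~\ref{inducefulltop} to make $\wh S\to\wh\G$ injective, and then finish with Proposition~\ref{p:goodS}. This is precisely what the paper does.

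One small slip: in your bridging paragraph you assert that a word-hyperbolic group with infinitely many ends ``splits non-trivially over a finite subgroup and is therefore virtually free.'' Splitting over a finite subgroup does not by itself give virtual freeness --- for instance $\Sigma_g * \Z$ is hyperbolic with infinitely many ends but is not virtually free. The correct bridge is: by Stallings together with Dunwoody accessibility (hyperbolic groups are finitely presented), any hyperbolic group is the fundamental group of a finite graph of groups with finite edge groups and vertex groups that are either finite or $1$-ended hyperbolic; the latter are residually finite by hypothesis, and a graph of residually finite groups with finite edge groups is residually finite. So your conclusion that all hyperbolic groups are residually finite is right, and the appeal to \cite{AGM} goes through; only the one-line justification needs repair. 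The paper itself simply cites \cite{AGM} under the $1$-ended hypothesis without spelling out this reduction.
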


\begin{proof} Assume the contrary, and let $\G$ be a counter-example, 
with $\widehat{\Gamma} \cong \widehat F$ for some free group $F$.
Let $H$ be a quasi-convex surface subgroup of $\G$. 
Note that the finite-index subgroups of $H$ are also quasi-convex in $\G$. 
Under the assumption that all 1-ended hyperbolic groups are residually finite, it
 is proved  in \cite{AGM} that $H$ and all 
its subgroups of finite index must be separable in $\G$. 
Hence by Lemma~\ref{inducefulltop}, 
the natural map  $\widehat{H}\rightarrow \overline{H} <\widehat{\G}$ is an
isomorphism.  But now Proposition~\ref{p:goodS} yields a contradiction.
\end{proof}

\begin{corollary}
\label{candidate_counterex}
Suppose that there exists a 1-ended word hyperbolic group $\G$ with 
$\widehat{\G}\cong\widehat{F}$ for some free group $F$. Then either
there exists a word-hyperbolic group that is not residually finite, or
there exists a word-hyperbolic group that does not contain
a quasi-convex surface subgroup.
\end{corollary}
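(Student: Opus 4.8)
The plan is to derive this as a direct logical consequence of Theorem~\ref{wordhyp} (or rather its contrapositive). The hypothesis of the corollary is precisely the negation of the conclusion of Theorem~\ref{wordhyp}: we are handed a $1$-ended word-hyperbolic group $\G$ together with a free group $F$ and an isomorphism $\wh{\G}\cong\wh{F}$, which is exactly what Theorem~\ref{wordhyp} asserts cannot exist \emph{provided} its hypothesis holds. So the existence of such a $\G$ forces that hypothesis to fail.

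First I would recall the exact statement of the hypothesis in Theorem~\ref{wordhyp}: ``every $1$-ended word-hyperbolic group is residually finite and contains a quasi-convex surface subgroup.'' This is a conjunction of two universally quantified assertions. Its negation is therefore: ``there exists a $1$-ended word-hyperbolic group that is not residually finite, or there exists a $1$-ended word-hyperbolic group that does not contain a quasi-convex surface subgroup.'' Second, I would invoke Theorem~\ref{wordhyp} in contrapositive form: if there do exist a $1$-ended word-hyperbolic group $\G$ and a free group $F$ with $\wh{\G}\cong\wh{F}$, then the hypothesis of Theorem~\ref{wordhyp} must be false. Combining these two observations immediately gives the stated disjunction (one may then weaken ``$1$-ended word-hyperbolic group that does not contain a quasi-convex surface subgroup'' to the slightly more permissive ``word-hyperbolic group that does not contain a quasi-convex surface subgroup'' as in the corollary's phrasing, and similarly for residual finiteness).

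Since there is genuinely no further mathematical content — the corollary is a pure restatement of the contrapositive of the preceding theorem — the only thing to be careful about is the precise bookkeeping of quantifiers, making sure the negation of the conjunction ``$P$ and $Q$'' is correctly rendered as ``(not $P$) or (not $Q$)'', and that the $1$-endedness and word-hyperbolicity hypotheses are tracked consistently through both the statement of Theorem~\ref{wordhyp} and its negation. I do not anticipate any obstacle; the ``hard part'', such as it is, was already done in the proof of Theorem~\ref{wordhyp}, which used \cite{AGM}, Lemma~\ref{inducefulltop}, and Proposition~\ref{p:goodS}. The proof of the corollary itself should be a single short paragraph that says: assume such a $\G$ and $F$ exist; if the hypothesis of Theorem~\ref{wordhyp} held, that theorem would give a contradiction; hence the hypothesis fails, which is exactly the claimed disjunction.
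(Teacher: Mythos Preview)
Your proposal is correct and matches the paper's treatment: the corollary is stated immediately after Theorem~\ref{wordhyp} with no separate proof, precisely because it is the contrapositive of that theorem, as you explain. Your care in tracking the negation of the conjunction and the weakening from ``1-ended word-hyperbolic'' to ``word-hyperbolic'' is appropriate, and there is nothing to add.
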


\subsection{Residually free groups}

A group $\G$ is called {\em residually free} if for every non-trivial element
$g\in\Gamma$ there is a homomorphism $\phi_g$ from $\G$ to a free
group such that $\phi_g(g)\neq 1$, and $\G$ is {\em fully residually
  free} if for every finite subset $X\subseteq\Gamma$ there is a
homomorphism from $\G$ to a free group that restricts to an injection
on~$X$.  

Following Sela \cite{sela}, we use the term {\em limit group}
to describe a finitely-generated group $\G$ that is fully residually
free.  This defines a rich class of groups, the prototypes for which are
the free abelian groups and torsion-free Fuchsian groups (with the
exception of cocompact groups of Euler characteristic $-1$).
Indeed an arbitrary limit group can be built from these basic examples
in a simple hierarchical manner, by starting with a join of $n$-tori, closed surfaces
and compact graphs, and at each level attaching a torus along a coordinate circle
to the space at the previous level, or attaching a hyperbolic
surface along a boundary curve; one requires that the new
space retracts onto the previous level. The finitely-generated
subgroups of the fundamental groups of these spaces are precisely the
limit groups; see \cite{sela,KM}.

\begin{lemma}\label{l:r-free} If $\G$ is finitely-generated and residually free, then either $\G$
is a hyperbolic limit group, or else $\G$ has a free abelian subgroup of rank $2$.
\end{lemma}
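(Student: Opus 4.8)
The plan is to invoke the structure theory of finitely-generated residually free groups. By a theorem of Baumslag, Myasnikov and Remeslennikov (building on the work of Sela \cite{sela} and Kharlampovich--Myasnikov \cite{KM}), any finitely-generated residually free group $\G$ embeds into a direct product $L_1\times\cdots\times L_n$ of finitely many limit groups. Replacing each $L_i$ by the image $p_i(\G)$ of $\G$ under the $i$-th coordinate projection --- which is again a limit group, because finitely-generated subgroups of limit groups are limit groups --- we may assume that $\G$ is a \emph{subdirect} product with each $p_i$ surjective, and we take $n$ to be minimal. The argument then divides according to whether $n=1$ or $n\ge 2$: when $n=1$ the group $\G$ is itself a limit group and I will show it is word-hyperbolic unless it contains $\Z^2$; when $n\ge 2$ I will exhibit a copy of $\Z^2$ inside $\G$ directly.

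Consider first $n=1$, so that $\G$ is a limit group. Here I would appeal to Dahmani's theorem (also proved independently by Alibegovi\'c) that every limit group is hyperbolic relative to the family of its maximal non-cyclic abelian subgroups, together with the standard fact that abelian subgroups of limit groups are free abelian of finite rank. If $\G$ has no subgroup isomorphic to $\Z^2$ then it has no non-cyclic abelian subgroup, so the peripheral family in Dahmani's theorem is empty and $\G$ is word-hyperbolic. Thus in this case $\G$ is either a hyperbolic limit group or contains $\Z^2$.

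Now suppose $n\ge 2$. Set $N=\ker\big(\G\to L_2\times\cdots\times L_n\big)$ and $N'=\ker\big(p_1\colon\G\to L_1\big)$, so $N$ lies in the first direct factor of $L_1\times\cdots\times L_n$ and $N'$ lies in the product of the remaining factors. If $N'=1$ then $p_1$ is injective and $\G\cong L_1$ is a limit group, which could be written as a subdirect product with a single factor, contradicting $n\ge 2$; if $N=1$ then $\G$ embeds subdirectly into $L_2\times\cdots\times L_n$, contradicting the minimality of $n$. Hence $N\ne 1$ and $N'\ne 1$. As $N$ and $N'$ lie in complementary direct factors they commute elementwise and intersect trivially, and they are torsion-free because limit groups are. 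Choosing $1\ne a\in N$ and $1\ne b\in N'$, the subgroup $\langle a\rangle\times\langle b\rangle\le\G$ is isomorphic to $\Z^2$, completing the dichotomy.

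The bookkeeping points --- that $p_i(\G)$ is again a limit group, and that the reductions above respect the minimality of $n$ --- are routine. The single step that needs genuine external input is the passage, in the case $n=1$, from the absence of $\Z^2$ to word-hyperbolicity of the limit group $\G$; this is precisely where the relative hyperbolicity of limit groups (Dahmani), together with finite generation of their abelian subgroups, is indispensable, and it is the point I would flag as the crux of the argument.
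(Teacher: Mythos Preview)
Your proof is correct and follows essentially the same route as the paper's. The only cosmetic differences are that the paper projects away from trivially-intersected factors (rather than taking $n$ minimal in a subdirect embedding) and then picks commuting elements directly from two nontrivial intersections $\G\cap L_i$, and that it simply asserts ``non-hyperbolic limit groups contain $\Z^2$'' without elaboration, whereas you justify this step via Dahmani's relative hyperbolicity theorem.
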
 

\begin{proof} Every finitely-generated residually free group $\G$ is a
  subgroup of a direct product of finitely many limit groups
  \cite{brm}. By projecting away from factors that it intersects
  trivially, we may assume that $\G$ intersects each of the
  factors. Limit groups are torsion-free, so if $\G$ intersects two
  factors then it contains a copy of $\Z^2$. Otherwise, it is a
  subgroup of a limit group. Finitely-generated subgroups of limit
  groups are themselves limit groups, and non-hyperbolic limit groups
  contain $\Z^2$.
\end{proof}

Wilton \cite{Wil} proved that if $\G$ is a limit group and $H$ is a 
finitely-generated subgroup of $\G$, then there is a subgroup $\G_0<\G$ of finite index 
that contains $H$ and retracts onto it. If one assumes only that $\G$
is finitely-generated and residually free, then the analogous
statement is false in general. However, it  is shown in \cite{BW} 
that, in this generality, $\G$ does virtually retract onto any
subgroup that has a finite classifying space. Thus we obtain the following:

\begin{theorem}\label{wilton} If $\G$ is finitely-generated and
  residually free, then either
$\G$ has a subgroup of finite index that retracts onto a surface group (of genus $g\ge 1$)
or else $\G$ is a hyperbolic limit group that does not contain a surface subgroup.
\end{theorem}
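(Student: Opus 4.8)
The plan is to deduce Theorem~\ref{wilton} by combining Lemma~\ref{l:r-free} with the virtual-retraction results of Wilton \cite{Wil} and Bridson--Wilton \cite{BW}. First I would dispose of the hyperbolic case: if $\G$ is a hyperbolic limit group, then either it contains a surface subgroup --- in which case Wilton's theorem \cite{Wil} produces a finite-index subgroup $\G_0<\G$ containing that surface subgroup and retracting onto it, putting us in the first alternative --- or it contains no surface subgroup, which is precisely the second alternative. (One has to observe that a hyperbolic limit group which is itself a surface group of genus $g\ge 1$ trivially retracts onto itself; and that there is nothing to prove when $\G$ is free of rank $\ge 2$, since then $\G$ surjects, hence retracts, onto $F_1\times F_1$... no: more simply, a free group of rank $\ge 2$ retracts onto a subgroup isomorphic to a surface group only after passing to a finite-index subgroup --- but in fact the statement as phrased allows ``surface group'' to include the case where $\G$ is handled by the non-hyperbolic branch, so the free case needs the second route below.)

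Second, I would handle the non-hyperbolic case via Lemma~\ref{l:r-free}: if $\G$ is not a hyperbolic limit group, then $\G$ contains a copy of $\Z^2$. Here the key input is Theorem~\ref{wilton}'s cited source \cite{BW}: a finitely-generated residually free group virtually retracts onto any subgroup possessing a finite classifying space. A surface group of genus $g\ge 1$ (orientable or not) has a finite classifying space (the surface itself, or for the torus and Klein bottle the $2$-complex), and so does $\Z^2$; so if I can locate \emph{some} surface subgroup inside $\G$, I am done. The natural candidate is $\Z^2$ itself, which is the fundamental group of the torus, a closed surface of genus $1$. Thus $\G$ virtually retracts onto $\Z^2$, a surface group, giving the first alternative. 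Combining the two cases, every finitely-generated residually free $\G$ either has a finite-index subgroup retracting onto a surface group, or is a hyperbolic limit group without surface subgroups --- and in the latter case it could still happen that $\G$ is free, which is consistent with the dichotomy as stated (a free group is a hyperbolic limit group containing no closed-surface subgroup, and indeed does not retract onto a surface group even virtually, since virtual retracts of free groups are free).

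The main obstacle, and the point requiring the most care, is the free case: a free group of rank $\ge 2$ is residually free and hyperbolic, contains no $\Z^2$, and contains no closed surface subgroup, so it must land in the second alternative of the theorem --- this is consistent, but one must make sure the statement is not claiming that free groups virtually retract onto surface groups (they do not). So the logical structure I would emphasize is: apply Lemma~\ref{l:r-free} to split into the $\Z^2$-case (handled by \cite{BW}) and the hyperbolic-limit-group case; within the latter, split according to whether a closed surface subgroup exists, applying \cite{Wil} in the affirmative case. I expect the only real subtlety beyond citation-assembly is checking that the retraction targets --- closed surfaces of genus $\ge 1$, including the torus as the genus-$1$ case --- have finite classifying spaces so that \cite{BW} applies, and that Wilton's virtual-retraction-onto-subgroups result \cite{Wil} genuinely yields a retraction and not merely a containment; both are standard, so the proof should be short.
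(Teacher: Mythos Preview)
Your proposal is correct and follows essentially the same route as the paper: split via Lemma~\ref{l:r-free} into the case where $\G$ contains $\Z^2$ (then apply \cite{BW}, noting that $\Z^2$ is the genus-$1$ surface group and has a finite classifying space) and the case where $\G$ is a hyperbolic limit group (then apply \cite{Wil} if a surface subgroup exists, otherwise land in the second alternative). The paper presents this argument in the paragraph immediately preceding the theorem rather than as a separate proof; your version merely unpacks the case analysis more explicitly, and your observation that the free case sits in the second alternative is correct --- though you should excise the false starts and parenthetical self-corrections (the aside about $F_1\times F_1$ in particular) before writing this up.
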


\begin{theorem} Let $\G$ be a residually-free group and let $F$ be a free group.
If $\widehat{\Gamma}\cong\widehat F$, then either $\G\cong F$ or else $\G$ is a hyperbolic limit
group that does not contain a surface subgroup.
\end{theorem}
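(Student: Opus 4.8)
The plan is to combine the structural dichotomy of Theorem~\ref{wilton} with the cohomological obstruction of Proposition~\ref{p:goodS}, keeping track of the parity of Betti numbers to eliminate the bad case when the finite-index retract is a torus. So suppose $\G$ is residually free and $\widehat{\Gamma}\cong\widehat F$ for a free group $F$, and that $\G$ is not a hyperbolic limit group without a surface subgroup; I must deduce $\G\cong F$.

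First I would invoke Theorem~\ref{wilton}: since $\G$ is not in the excluded case, it has a finite-index subgroup $\G_0$ that retracts onto a surface group $S$ of genus $g\ge 1$. Now I split into two cases according to $g$. If $g\ge 2$, then $S$ is a closed surface group, hence a $\mathrm{PD}_2(\mathbf F_p)$ group for every prime $p$, and it is good by \cite{GJZ}. Because $S$ is a \emph{retract} of $\G_0$, the inclusion $S\hookrightarrow\G_0$ is split by a homomorphism $\G_0\to S$, and this splitting persists on profinite completions; in particular $\widehat S\to\widehat{\G_0}$ is injective (it has a left inverse $\widehat{\G_0}\to\widehat S$). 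Passing up, $\widehat S\to\widehat{\G_0}\to\widehat{\Gamma}$ is injective (the map $\widehat{\G_0}\to\widehat{\Gamma}$ is injective by Corollary~\ref{c1}, since $\G_0$ has finite index). Then Proposition~\ref{p:goodS} applies with $d=2$ and gives a contradiction with $\widehat{\Gamma}\cong\widehat F$.

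The remaining, and genuinely different, case is $g=1$, so that $S\cong\Z^2$ and Proposition~\ref{p:goodS} does not directly apply (it requires $d>1$ together with a closed-surface-type group, and $\Z^2$ is $\mathrm{PD}_2$ but the argument there uses a nonabelian surface — actually $\Z^2$ \emph{is} $\mathrm{PD}_2(\mathbf F_p)$ and good, so Proposition~\ref{p:goodS} does apply here too, since it only needs $d>1$, goodness, and $\widehat S\to\widehat\Gamma$ injective). On reflection, the same argument works verbatim: $\Z^2$ is good and $\mathrm{PD}_2(\mathbf F_p)$, so if $\Z^2$ is a virtual retract of $\G$ with $\widehat{\Z^2}\hookrightarrow\widehat{\Gamma}$ then Corollary~\ref{cdboundapply} forces $H^2(\Z^2;\mathbf F_p)=0$, which is false. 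Hence in \emph{all} cases of Theorem~\ref{wilton}'s first alternative we reach a contradiction, so we are forced into the second alternative: $\G$ is a hyperbolic limit group containing no surface subgroup. It remains to rule out that such a $\G$ is nonfree, i.e.\ to show a hyperbolic limit group with $\widehat{\Gamma}\cong\widehat F$ and no surface subgroup must be free --- but the theorem statement already allows this as the conclusion, so there is nothing further to prove once the virtual-retract case is dispatched.

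The main obstacle is verifying that a retraction $\G_0\to S$ really does yield an injection $\widehat S\to\widehat{\Gamma}$: one needs that $S$ is separable in $\G_0$ with the induced profinite topology being the full one, which is exactly what a retraction buys (every finite quotient of $S$ extends to a finite quotient of $\G_0$ by precomposing with the retraction), and then Lemma~\ref{inducefulltop} or a direct argument gives $\widehat S\xrightarrow{\sim}\overline S\hookrightarrow\widehat{\G_0}$, composed with the injection $\widehat{\G_0}\hookrightarrow\widehat{\Gamma}$ from Corollary~\ref{c1}. A secondary point to handle cleanly is that $\widehat F$ has finite cohomological dimension (free groups are good and $\cd=1$, so $\cd(\widehat F)\le 1$ by Lemma~\ref{cdim}), which is what makes $H^{\ge 2}(\widehat L;\mathbf F_p)=0$ and powers Proposition~\ref{p:goodS}. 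With these in hand the proof is a short assembly of the quoted results.
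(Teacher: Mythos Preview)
Your argument is correct and follows essentially the same route as the paper: invoke Theorem~\ref{wilton} to obtain either the excluded hyperbolic-limit-group alternative or a virtual retract onto a surface group $S$ of genus $\ge 1$, observe that a (virtual) retraction forces $\widehat S\to\widehat\Gamma$ to be injective, and then apply Proposition~\ref{p:goodS}. Your hesitation over the $g=1$ case is unnecessary---as you yourself realise, $\Z^2$ is good and ${\rm PD}_2({\bf F}_p)$, so Proposition~\ref{p:goodS} applies uniformly---and you are more explicit than the paper about why a retraction $\Gamma_0\to S$ yields $\widehat S\hookrightarrow\widehat{\Gamma_0}\hookrightarrow\widehat\Gamma$ (the paper simply asserts ``it is easy to see'').
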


\begin{proof} It is easy to see that if $S<\G$ is a virtual retract
  then the natural map $\widehat{S}\to \overline S <\widehat \G$ is an
  isomorphism onto its image, so in the light of Theorem~\ref{wilton}
  we can apply Proposition~\ref{p:goodS}.
\end{proof}

\begin{remark}
It is believed by many that the second possibility stated in the above theorem
does not arise, but this remains unknown for the moment.
Using the hierarchical structure of limit groups described above, one sees
that the crucial case to consider is one where $\G$ is the fundamental group of
a graph of free groups with cyclic edge groups.
There has been much recent work on the existence of surface
subgroups in such groups; see \cite{Cal}, \cite{GW}, \cite{KW}, and \cite{KO}
for example.  
\end{remark}

\begin{remark}\label{r:PvFxF}
  In the preceding proof, we exploited the fact that free groups and
  surface groups are good.  The direct product of two good groups is
  again good (see \cite{GJZ} Proposition 3.4), 
so $F_k\times F_k$ is good. Now consider the fibre product
  $P=\{(x,y)\mid p(x)=p(y)\} <F_k\times F_k$ associated with an
  epimorphism $p:F_k\to Q$, where $Q$ is a finitely-presented group
  with $H_2(Q,\Z)=0$ that has no non-trivial finite quotients.  The
  inclusion $u:P\to \G:=F_k\times F_k$ induces an isomorphism
  $\widehat u: \widehat P \to \widehat \G$ (see \cite{BG}).  Taking
  coefficients in a finite module $M$ with trivial action,  we find that
  $H^2(\widehat{\Gamma}, M)$ will be finitely-generated, since $\G$ is
  good. But a simple spectral sequence calculation shows that in many
  cases $H^2(P,M)$ is {\em not} finitely-generated
  (cf.~\cite{BaumslagRoseblade}), and so $P$ is {\em not} good. Thus we
  have an example of a finitely-generated residually free group that
  is {\em not} good. In contrast, all limit groups {\em are} good; 
  see \cite{GJZ}.

It follows that goodness is definitely a property of $\G$ and not of $\widehat{\Gamma}$:  we
have $P<\G$ with $\widehat P\cong\widehat{\Gamma}$, but $\G$ is good while $P$ is not.
\end{remark}

\section{Torsion in the profinite completions of Fuchsian groups}

In this section we will consider the case of a non-elementary Fuchsian group $\Gamma$.
In the cocompact case, any such group $\Gamma$ has a 
presentation of the form:
$$
\langle a_1, b_1,\ldots, a_g, b_g,x_1,\ldots, x_r~|~x_i^{m_i} = 1~
(i = 1, \ldots, r),~ \prod_1^rx_i\prod_1^g[a_k,b_k] = 1\rangle.
$$
The parameters appearing in the presentation $(g; m_1,\ldots, m_r)$
constitute the {\it signature} of  $\Gamma$.  The integers
$m_i$ are the {\em periods} of $\G$, and $g$ is its
{\em genus}. Groups with signature $(0;p,q,r)$ are generally referred to as {\em
  $(p,q,r)$-triangle groups}. 
If the Fuchsian group is not cocompact, then it is a free
product of cyclic groups. 

\smallskip
We write $\conj(\G)$ to denote the set of conjugacy 
classes of {\em maximal finite subgroups} of a group $\G$. 
Recall that a group $\G$ is called {\em conjugacy separable} if it has the property that 
if the images of two elements of $\G$ are conjugate in every finite quotient 
of $G$ (equivalently, are conjugate in $\widehat{\G}$), then those two elements  
are conjugate in $\G$. 

\begin{theorem}\label{p:conjClasses} 
If $\G$ is a finitely-generated Fuchsian group, then the natural
inclusion $\G\to\wh\G$ induces a bijection 
$\conj(\G)\to\conj(\wh\G)$. More precisely,
every finite subgroup of $\widehat{\Gamma}$ is conjugate to
a subgroup of $\G$, and if two maximal finite subgroups of $\G$ are conjugate in $\widehat{\Gamma}$ then they are
already conjugate in $\G$.
\end{theorem}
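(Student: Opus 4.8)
The plan is to combine the structure theory of finitely generated Fuchsian groups with the profinite Bass--Serre theory recalled in Section~2, dividing into three cases. Recall first the relevant structure: every finite subgroup of a Fuchsian group $\G\le\PSL(2,\R)$ is cyclic (a finite group acting on the hyperbolic plane has a fixed point, and point stabilisers in $\G$ are finite cyclic); up to conjugacy the maximal finite subgroups of a cocompact $\G$ are exactly the cyclic groups $\langle x_i\rangle$ of order $m_i$ occurring in the standard presentation, while in the non-cocompact case they are the conjugates of the cyclic free factors; and in both cases these are pairwise non-conjugate in $\G$. Thus $\conj(\G)$ is understood explicitly. The essential assertion is \emph{surjectivity}: every finite subgroup $K$ of $\wh\G$ is conjugate into $\G$. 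Granting this, a maximal finite subgroup of $\wh\G$ is conjugate into some maximal finite $E\le\G$ and, by maximality of the original subgroup, equals a conjugate of $E$; and if $E_1,E_2\le\G$ are maximal finite and conjugate in $\wh\G$, then a conjugating element carries a generator of $E_1$ to a generator of $E_2$, so conjugacy separability of $\G$ (a known property of finitely generated Fuchsian groups) forces $E_1$ and $E_2$ to be conjugate in $\G$. Hence $\conj(\G)\to\conj(\wh\G)$ is a bijection, and everything reduces to surjectivity.

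\emph{The two straightforward cases.} If $\G$ is not cocompact it is a free product $C_1*\cdots*C_k*F$ of finite cyclic groups and a free group; applying Corollary~\ref{profinitesurface} repeatedly (with trivial amalgamated subgroup) identifies $\wh\G$ with the corresponding profinite free product, and iterating Theorem~\ref{finite_in_vertex} conjugates any finite $K\le\wh\G$ into some $\wh C_i=C_i$ or into $\wh F$; the latter is torsion-free, so in that case $K=1$. If $\G$ is cocompact but not a triangle group, then by \cite{LMR} it splits non-trivially as $\G=A*_C B$ with $C$ infinite cyclic and with $A,B$ either finite cyclic or non-cocompact Fuchsian; since $\G$ is LERF \cite{Sc}, Corollary~\ref{profinitesurface} gives $\wh\G\cong\wh A\amalg_{\wh C}\wh B$, Theorem~\ref{finite_in_vertex} conjugates a given finite subgroup into $\wh A$ or $\wh B$, and the previous case disposes of it.

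\emph{Triangle groups --- the crux.} A $(p,q,r)$-triangle group $\G$ has no non-trivial splitting, so the plan is to pass to a large-index non-triangle Fuchsian \emph{subgroup} while keeping the given finite subgroup in sight. Fix a torsion-free normal finite-index subgroup $N_0\triangleleft\G$; it is a surface group, hence good \cite{GJZ} of cohomological dimension $2$, so $\wh{N_0}=\overline{N_0}$ is torsion-free by Corollary~\ref{notorsion2}. Consequently any finite $K\le\wh\G$ injects into $\G/N_0$, which gives the uniform bound $|K|\le B:=[\G:N_0]$. Now, given such a $K$, use residual finiteness of $\G$ to choose a torsion-free normal subgroup $\Sigma\triangleleft\G$ with $[\G:\Sigma]>B/|\chi(\G)|$; here $\chi(\G)=-1+1/p+1/q+1/r$, so $0<|\chi(\G)|<1$. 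Put $F=\G/\Sigma$. As before $\overline{\Sigma}=\wh\Sigma$ is torsion-free, so $K$ injects into $F$ with image $\bar K$ of order at most $B$; let $H\le\G$ be the full preimage of $\bar K$. Then $H$ is a cocompact Fuchsian group of finite index in $\G$, and by multiplicativity of $\chi$, $|\chi(H)|=[\G:H]\,|\chi(\G)|=[F:\bar K]\,|\chi(\G)|\ge([\G:\Sigma]/B)\,|\chi(\G)|>1$, so $H$ is \emph{not} a triangle group. Moreover $K$ lies in the preimage of $\bar K$ under $\wh\G\to F$, which by Proposition~\ref{correspondence} equals $\overline{H}$, and $\overline H\cong\wh H$ by Corollary~\ref{c1}. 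Applying the non-triangle case to $H$, we conclude that $K$ is conjugate, inside $\wh H\le\wh\G$, into $H\subseteq\G$. This establishes surjectivity in all cases, and with it the theorem.

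I expect the triangle-group case to be the main obstacle: one must manufacture a finite-index Fuchsian subgroup that both admits a splitting and still contains a conjugate of the prescribed finite subgroup of $\wh\G$, and the one genuinely new input needed is the uniform bound $|K|\le B$ on the orders of finite subgroups of $\wh\G$ --- this is what allows the index of $\Sigma$ to be enlarged without enlarging $\bar K$. A lesser technical point is the injectivity of $\conj(\G)\to\conj(\wh\G)$: I would deduce it from conjugacy separability of Fuchsian groups, though in the first two cases it can instead be read off from the uniqueness statements in profinite Bass--Serre theory (see \cite{HeR}, \cite{MZ}) and transported through the reduction in the triangle case.
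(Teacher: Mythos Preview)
Your argument is correct and follows the same architecture as the paper's: injectivity via conjugacy separability of Fuchsian groups, surjectivity in the non-triangle case via the profinite splitting (Corollary~\ref{profinitesurface}) and Theorem~\ref{finite_in_vertex}, and reduction of the triangle case to a finite-index non-triangle subgroup containing the given finite subgroup.

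The one place you diverge is in the mechanics of the triangle reduction, and here your route is more elaborate than necessary. You invoke goodness and Corollary~\ref{notorsion2} to get the uniform bound $|K|\le B$, then choose $\Sigma$ accordingly. The paper does this in one line without any uniform bound: given a maximal finite $H\le\wh\G$, residual finiteness of the profinite group $\wh\G$ lets one choose an open subgroup containing $H$ of index greater than $pqr$ (concretely, for any open normal $N$ one has $[\wh\G:NH]\ge[\wh\G:N]/|H|$, and $|H|$ is just some fixed integer, so take $N$ of large enough index). The intersection $\Lambda$ of this open subgroup with $\G$ then satisfies $|\chi(\Lambda)|>pqr\,|\chi(\G)|\ge 1$, hence is not a triangle group, and one finishes as you do. So the uniform bound you flag as ``the one genuinely new input'' is not actually needed: the given finite subgroup already has \emph{some} finite order, and that is all the argument requires. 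Your detour through goodness is valid but superfluous here.
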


\begin{proof} 
  The finite subgroups of Fuchsian groups are cyclic, so the
  injectivity of $\conj(\G)\to\conj(\wh\G)$ is a special case of the
  fact that finitely-generated Fuchsian groups are conjugacy separable \cite{St}. 

\smallskip
In order to prove surjectivity, we first suppose that
  $\G$ is not a triangle group. In this case, $\G$ splits as an
  amalgamated free product $\G=A\ast_C B$, where the non-trivial groups
  $A$ and $B$ are free products of cyclic groups.  By Bass-Serre
  theory, the finite free factors of $A$ and $B$ are in bijection with
  the conjugacy classes of maximal finite subgroups in $\Gamma$.  Now by 
  Corollary~\ref{profinitesurface}, we have $\widehat{\Gamma} \cong
  \widehat{A} \amalg_{\widehat{C}}\widehat{B}$ and by Theorem
 ~\ref{finite_in_vertex}, every finite subgroup of $\widehat{\Gamma}$
  is conjugate to a subgroup of $\widehat A$ or $\widehat B$.
  Repeating this argument with $A$ and $B$ in place of $\Gamma$, we
  deduce that every finite subgroup of $\widehat \Gamma$ is conjugate
  to a subgroup of the profinite completion of one of the free factors
  of $A$ or $B$. But each free factor $Z_i$ is cyclic, so $\widehat
  Z_i$ is torsion-free ($\cong\widehat\Z$) if $Z_i$ is infinite, while 
  $\widehat Z_i =Z_i$ if $Z_i$ is finite. Thus every maximal finite
  subgroup of $\widehat{\Gamma}$ is conjugate to one of the $Z_i$.

It remains to prove surjectivity in the case where $\G$ is a hyperbolic 
triangle group, say the $(p,q,r)$-triangle group. 
Let $H$ be a maximal finite subgroup of $\widehat \G$.
Since $\widehat{\Gamma}$ is residually finite, we can pass to a subgroup of arbitrarily large index that contains $H$; 
in particular, we can make the index greater than $pqr$. Let $\Lambda\subset\G$ be
the intersection of $\G$ with this subgroup. Then the index of $\Lambda$ 
 in $\Gamma$ is greater than $pqr$, so the rational Euler characteristic of $\Lambda$
 is less than $-2$, which means that $\Lambda$ is not a hyperbolic triangle group. 
Hence our previous argument applies, and $H$ is conjugate in $\widehat{\Lambda}$ 
to a finite subgroup $H'$ of $\Lambda$. Since $H$ is maximal in $\widehat{\Gamma}$,
it follows that this $H'$ is maximal in $\G$.
\end{proof}

We can now use Theorem \ref{p:conjClasses} to eliminate the latter
case of Theorem \ref{patch}.

\begin{corollary}
\label{eliminatefinitenormal}
Let $\G$ be a finitely generated Fuchsian group. Then $\wh\G$ does not
contain a non-trivial finite normal subgroup.\end{corollary}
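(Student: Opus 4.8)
The plan is to argue by contradiction: suppose $N$ is a non-trivial finite normal subgroup of $\wh\G$. First I would apply Theorem~\ref{p:conjClasses}: since $N$ is finite, it is conjugate to a subgroup of $\G$, so there is no loss in assuming $N<\G$ after replacing $N$ by a conjugate (which is still normal, normality being conjugation-invariant). Now $N$ is a non-trivial finite subgroup of $\G$, hence cyclic (finite subgroups of Fuchsian groups are cyclic), say generated by an element $x$ of some prime order $p$ dividing one of the periods $m_i$. The key point is that although $N$ is normal in $\wh\G$, it cannot be normal in the discrete group $\G$ unless $\G$ is elementary: in a non-elementary Fuchsian group, no non-trivial finite subgroup is normal, because $\G$ contains infinite-order elements and the normaliser of a non-trivial finite cyclic subgroup is itself a (finite or elementary) subgroup. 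So $N$ is not normal in $\G$; I need to derive a contradiction with $N\triangleleft\wh\G$.

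The cleanest route is via conjugacy separability, which is already invoked in the proof of Theorem~\ref{p:conjClasses}: finitely-generated Fuchsian groups are conjugacy separable \cite{St}, and more is true — they are in fact \emph{subgroup} conjugacy separable for finite subgroups, which is exactly the content of Theorem~\ref{p:conjClasses}, i.e. the map $\conj(\G)\to\conj(\wh\G)$ is injective. Since $N$ is normal in $\wh\G$, for every $\gamma\in\G$ the conjugate $\gamma N\gamma^{-1}$ equals $N$ inside $\wh\G$; but $\gamma N\gamma^{-1}$ and $N$ are both finite subgroups of $\G$, and if they are conjugate (indeed equal) in $\wh\G$ then by the injectivity statement in Theorem~\ref{p:conjClasses} — applied to the maximal finite subgroups containing them, or directly, since conjugate maximal finite subgroups of $\G$ are already conjugate in $\G$ — they are conjugate in $\G$. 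Thus $N$ has only finitely many $\G$-conjugates inside $\G$ that fall into a single conjugacy class picture; more precisely, the $\G$-conjugacy class of $N$ maps injectively into the (one-point) $\wh\G$-conjugacy class of $N$, forcing all $\G$-conjugates of $N$ to coincide, i.e. $N\triangleleft\G$. Since $\G$ is non-elementary Fuchsian this is impossible.

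The main obstacle, and the place I would be most careful, is the passage from ``$N$ is a finite normal subgroup of $\wh\G$'' to a \emph{single} $\G$-conjugacy statement. Theorem~\ref{p:conjClasses} as stated is about \emph{maximal} finite subgroups, so if $N$ itself is not maximal in $\G$ I would first enlarge it: let $\hat M$ be a maximal finite subgroup of $\wh\G$ containing $N$; by Theorem~\ref{p:conjClasses} it is conjugate to a maximal finite (cyclic) subgroup $M<\G$, and inside this cyclic $M$ there is a unique subgroup of order $|N|$, giving a canonical copy of $N$ in $\G$. Then one must check that the normality of $N$ in $\wh\G$ forces the maximal subgroup $M$ — or rather all the $\G$-translates of the chosen copy of $N$ — to be $\G$-conjugate to one another, via the injectivity of $\conj(\G)\to\conj(\wh\G)$. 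I expect this bookkeeping with maximal overgroups and the uniqueness of subgroups of given order in a finite cyclic group to be the only real content; the final contradiction with non-elementarity is routine (a normal finite subgroup of a non-elementary Fuchsian group would have to be contained in the kernel of the action on $\mathbb{H}^2$, which is trivial).
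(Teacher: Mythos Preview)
Your proof is correct and follows the same strategy as the paper's: use Theorem~\ref{p:conjClasses} to place $N$ inside $\G$, then invoke the fact that a non-elementary Fuchsian group has no non-trivial finite normal subgroup. However, your detour through conjugacy separability is entirely unnecessary: once $N\subset\G\subset\wh\G$ and $N\triangleleft\wh\G$, the equality $\gamma N\gamma^{-1}=N$ for $\gamma\in\G$ is a \emph{set-theoretic} equality of subsets of $\G$ (there is no distinction between ``equal in $\wh\G$'' and ``equal in $\G$'' here), so $N\triangleleft\G$ follows immediately without any appeal to the injectivity of $\conj(\G)\to\conj(\wh\G)$ or to maximal overgroups. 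The paper's proof is accordingly two lines long.
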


\begin{proof}
Suppose $N$ is a non-trivial finite normal subgroup of $\wh\G$. By Theorem
\ref{p:conjClasses}, since every finite subgroup of $\wh\G$ is conjugate to
a subgroup of $\G$, $N$ is a subgroup of $\G$.  However,
non-elementary Fuchsian groups do not contain finite, non-trivial normal
subgroups.\end{proof}

Also, note that Theorem  \ref{p:conjClasses}
provides another proof of the following special case of Corollary~\ref{notorsion2}.

\begin{corollary}\label{c:hatTF} 
If $\G$ is a torsion-free Fuchsian group, then $\widehat{\Gamma}$ is torsion-free.
\end{corollary}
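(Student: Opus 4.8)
The plan is to read Corollary~\ref{c:hatTF} as an immediate consequence of Theorem~\ref{p:conjClasses}. Suppose $\Gamma$ is a torsion-free Fuchsian group but $\widehat{\Gamma}$ has a non-trivial torsion element $x$, of prime order $p$, say. Then $\langle x\rangle$ is a non-trivial finite subgroup of $\widehat{\Gamma}$, so by Theorem~\ref{p:conjClasses} it is conjugate in $\widehat{\Gamma}$ to a subgroup of $\Gamma$. In particular $\Gamma$ contains a non-trivial finite subgroup, contradicting the hypothesis that $\Gamma$ is torsion-free. Hence $\widehat{\Gamma}$ is torsion-free.

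The only point that deserves a word of care is the passage from ``$\widehat{\Gamma}$ has torsion'' to ``$\widehat{\Gamma}$ has an element of prime order'': a profinite group need not contain an element of prime order merely because it contains a non-trivial element of finite order, but an element $y$ of finite order $n>1$ has a power $y^{n/p}$ of prime order $p$ for any prime $p\mid n$, and $\langle y^{n/p}\rangle$ is a (cyclic, hence closed) finite subgroup, so this step is harmless. Alternatively one can simply apply Theorem~\ref{p:conjClasses} directly to the finite cyclic group $\langle y\rangle$, which already suffices for the contradiction.

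There is essentially no obstacle here: the entire content sits in Theorem~\ref{p:conjClasses}, whose statement guarantees that \emph{every} finite subgroup of $\widehat{\Gamma}$ is conjugate into $\Gamma$. The corollary is recorded only to make explicit that this recovers the torsion-freeness already obtained in Corollary~\ref{notorsion2} via goodness and finite cohomological dimension --- here obtained instead by the Bass--Serre/conjugacy-separability route --- so the proof is a two-line deduction.

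\begin{proof}
Suppose $\widehat{\Gamma}$ contains a non-trivial element of finite order. Then it contains a non-trivial finite cyclic subgroup $C$, and by Theorem~\ref{p:conjClasses} this $C$ is conjugate in $\widehat{\Gamma}$ to a subgroup of $\Gamma$. Hence $\Gamma$ contains a non-trivial finite subgroup, contradicting the assumption that $\Gamma$ is torsion-free. Therefore $\widehat{\Gamma}$ is torsion-free.
\end{proof}
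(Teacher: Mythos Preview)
Your proposal is correct and is exactly the deduction the paper intends: the text introduces this corollary by remarking that Theorem~\ref{p:conjClasses} ``provides another proof'' of this special case of Corollary~\ref{notorsion2}, and your argument---a non-trivial finite subgroup of $\widehat{\Gamma}$ would be conjugate into $\Gamma$, contradicting torsion-freeness---is precisely that proof. The digression on prime-order elements is unnecessary (as you note), but harmless.
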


\section{The Proof of Theorem~\ref{main1}}

In this section we complete the proof of Theorem~\ref{main1}, re-stated below for
convenience.
\medskip

\noindent{\bf{Theorem.}} {\em Let $\Gamma_1$ be a finitely-generated Fuchsian group and
let $\Gamma_2$ be a lattice in a connected Lie group.  
If ${\cal C}(\Gamma_1) = {\cal C}(\Gamma_2)$, then 
$\Gamma_1 \cong \Gamma_2$.}
\medskip
%

\begin{proof}[{\bf Proof of Theorem~\ref{main1}}] 
By Theorem \ref{patch} and Corollary \ref{eliminatefinitenormal}, we can 
now assume that both $\G_1$ and $\G_2$ are Fuchsian.

\smallskip 

Now suppose that $\G_1$ and $\G_2$ are torsion-free.  
In this case, each of $\Gamma_1$ and
$\Gamma_2$ is either a free group or a surface group --- that is, isomorphic
to the fundamental group of a closed orientable surface of genus at
least $2$. We proved (three times) in Section~\ref{s:FvS} that if
$\widehat{\Gamma}_1\cong\widehat \G_2$ then both of $\G_1$ and $\G_2$ are 
free groups, or both are surface groups.  But the isomorphism type of a free group
or a surface group $\G$ is determined by the rank of its abelianisation, 
which by Corollary~\ref{sameb1} is an invariant of~$\widehat{\Gamma}$.
Hence the theorem holds in this case. 

Next, we note that it cannot be true that $\Gamma_1$ is cocompact and
$\Gamma_2$ is not. For if this were the case, then we could pass to
torsion-free subgroups of common finite index that would still have
isomorphic profinite completions (see Corollary~\ref{same}), which is
impossible by the previous argument.
 
If neither $\G_1$ nor $\G_2$ is cocompact, then each is a free product of 
cyclic groups. We know that $b_1(\G_1)=b_1(\G_2)$, by Corollary~\ref{sameb1},
and so the number of infinite cyclic factors in each product is the same. And by Theorem
\ref{p:conjClasses}, the finite cyclic factors, being in bijection with the conjugacy
classes of maximal finite subgroups, are also the same. 
Hence the theorem holds in this case too. 

It only remains to consider the case where both $\G_1$ and $\G_2$ are cocompact
groups with torsion (by Corollary \ref{c:hatTF}). 
The genus of $\G_i$ is determined by $b_1(\G_i)$,
so  Corollary~\ref{sameb1} tells us that $\G_1$ and $\G_2$ are of the same genus.
The periods of $\G_i$ are the orders of representatives of the conjugacy classes of maximal finite
subgroups of $\G_i$, and so by Proposition~\ref{p:conjClasses} these must also be the
same for $\G_1$ and $\G_2$. Thus $\G_1$ and $\G_2$ have the same signature, and
are therefore isomorphic.

This completes the proof of  Theorem~\ref{main1}. 
\end{proof}

\begin{remark}~It is interesting to compare Theorem~\ref{main1} with
  some of the findings in \cite{GZ}.  Among other things, it is shown in
  \cite[Theorem 4.1]{GZ} that if $\G=A*B$ is a free product of finite
  groups $A$ and $B$, then among all finitely-generated virtually free
  groups, $\G$ is determined by its profinite completion. Our Theorem~\ref{main1}
covers free products of finite cyclic groups with arbitrarily many
factors. On the other hand, examples are provided in \cite{GZ} of finitely-generated virtually
free groups that are not determined by their profinite completions.
\end{remark}

\section{Distinguishing non-uniform arithmetic Kleinian groups}\label{s:psl2C}

A non-uniform arithmetic lattice in $\PSL(2,{\Bbb C})$ is a discrete
group commensurable (up to conjugacy) with a
Bianchi group $\PSL(2,O_d)$, where $O_d$ is the ring of integers in
the quadratic imaginary number field ${\Bbb Q}(\sqrt{-d})$; see \cite{MR}.
In this section we prove Theorem~\ref{mainbianchi}, re-stated below for 
convenience:

\medskip
\noindent{\bf{Theorem.}}
{\em Let $\Gamma_1$ be a non-uniform lattice in $\PSL(2,{\CC})$, 
and let $\Gamma_2$ be a non-uniform irreducible 
arithmetic lattice in a semisimple Lie group $G$ having 
trivial centre and no compact factors. 
If ${\cal C}(\Gamma_1) =
{\cal C}(\Gamma_2)$ then $G\cong \PSL(2,{\CC})$. Moreover, if $\G_1$
is arithmetic then the family of 
all $\G_2$ with ${\cal C}(\Gamma_1) = {\cal C}(\Gamma_2)$ divides into
finitely many commensurability classes.}

\medskip

\noindent 
After a series of reductions, we shall arrive at the case where $\G_2$
is a torsion-free lattice in $\SO_0(n,1)$ or $\SU(m,1)$. 
The {\em peripheral subgroups} of $\G_2$ are the maximal subgroups
consisting entirely of parabolic elements; they are the fundamental groups of the
cusps of the locally symmetric space with fundamental group $\G_2$.
If $\G_2<\SO_0(n,1)$ then  these subgroups are virtually $\Z^{n-1}$,
while if $\G_2<\SU(n,1)$ they are commensurable with the integral Heisenberg group $H_{2m-1}$,
which is a central extension of $\Z^{2m-2}$ by $\Z$. 
We shall make use of the following: 

\begin{lemma}
\label{peripheralseparability} 
Let $\G$ be a non-uniform arithmetic lattice in
$\SO_0(n,1)$ or $\SU(m,1)$ and let $D$
be a peripheral subgroup of $\G$. 
Then every subgroup of $D$ is separable in $\Gamma_2$, 
and $D$ is good.
\end{lemma}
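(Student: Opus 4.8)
The plan is to treat separability and goodness as two essentially independent tasks, exploiting the fact that $D$ is a virtually polycyclic group (virtually $\Z^{n-1}$ in the $\SO_0(n,1)$ case, virtually the integral Heisenberg group $H_{2m-1}$ in the $\SU(m,1)$ case) and that $\G$ is arithmetic, hence possesses the congruence subgroup property in a suitable sense, or at least has a rich supply of congruence subgroups cutting out separable families. First I would record the structure of $D$: it is finitely generated, nilpotent-by-finite, torsion-free after passing to a finite-index subgroup, and in particular every subgroup of $D$ is itself finitely generated. This already reduces the separability claim to showing that a finite-index overgroup argument is not needed — we really must separate arbitrary (not merely finitely generated) subgroups of $D$ in $\G$, but since $D$ is polycyclic every subgroup of $D$ is finitely generated, so ``every subgroup of $D$'' and ``every finitely generated subgroup of $D$'' coincide.

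For the separability statement, the key step is to show that $\G$ is $H$-separable for each subgroup $H\le D$. The approach is to use the peripheral (cusp) structure: $\G$ acts on the associated locally symmetric space $X$, and $D$ stabilises a horoball $B$ whose boundary horosphere is preserved by $D$; the quotient $D\backslash B$ is an embedded cusp neighbourhood. The standard mechanism here is that for a non-uniform arithmetic lattice the peripheral subgroups are separable (this goes back to work on subgroup separability for geometrically finite groups and, in the hyperbolic case, follows from the results of Agol--Long--Reid and their descendants on LERF for Bianchi groups and more general lattices; in the complex hyperbolic case one uses the arithmetic/congruence structure directly). Concretely I would: (i) reduce to the case $D$ torsion-free by passing to a finite-index subgroup $\G_0\le\G$ and noting that $H$-separability in $\G$ follows from $H\cap\G_0$-separability in $\G_0$ together with the finitely many cosets; (ii) realise $H\le D$ as the intersection with $D$ of a congruence subgroup of $\G$, using that $D$ is undistorted and that congruence conditions cut out precisely the finite-index subgroups of $D$ compatibly with those of $\G$; (iii) for $g\in\G\smallsetminus H$ produce a finite-index subgroup of $\G$ containing $H$ but missing $g$ by combining a congruence subgroup separating $g$ from $D$ (when $g\notin D$) or from $H$ inside $D$ (when $g\in D$, using that $D$ polycyclic is $H$-separable and that $\G$ induces the full profinite topology on $D$ — which is exactly the content of Lemma~\ref{inducefulltop} applied in reverse).

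For goodness of $D$: since $D$ is virtually polycyclic, it is good. The cleanest route is Lemma~\ref{goodexact}: $D$ has a finite-index subgroup $D_0$ which is poly-$\Z$, sitting in a short exact sequence $1\to \Z^k\to D_0\to \Z\to 1$ (or the Heisenberg central extension $1\to\Z\to H_{2m-1}\to\Z^{2m-2}\to 1$), and one proceeds by induction on the Hirsch length: $\Z$ and $\Z^k$ are good with finite cohomology in each degree with finite coefficients, so Lemma~\ref{goodexact} gives goodness of $D_0$; then goodness passes up to $D$ since it is preserved by commensurability. The only point requiring care is the finiteness hypothesis $H^q(N,M)$ finite for all finite modules $M$ in Lemma~\ref{goodexact}, which holds because $N$ is finitely generated free abelian (resp. $\Z$) and such groups are of type $FP_\infty$ with finite cohomology in each degree against finite coefficients.

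The main obstacle I anticipate is step (ii)--(iii) of the separability argument: namely verifying that the profinite topology on $\G$ induces the full profinite topology on $D$, i.e. that every finite-index subgroup of $D$ is of the form $D\cap\G_1$ for some finite-index $\G_1\le\G$. In the real hyperbolic case this is known (it is part of the LERF/GFERF package for Bianchi groups and, after Agol--Wise, for all finite-volume hyperbolic $n$-manifolds via cubulation, or more elementarily for Bianchi groups via the explicit congruence structure of the cusp). In the complex hyperbolic $\SU(m,1)$ case one does not have LERF in general, so one must argue instead directly from arithmeticity: the peripheral $H_{2m-1}$-like subgroup $D$ is commensurable with the integral points of a unipotent radical, and congruence subgroups of $\G$ intersect $D$ in congruence subgroups of that unipotent group, which are cofinal among all finite-index subgroups of $D$ because a finitely generated nilpotent group has the property that its finite-index subgroups are cofinal with its congruence subgroups — this is where the claim really bites and where I would spend the most care, invoking the arithmeticity hypothesis on $\G$ essentially.
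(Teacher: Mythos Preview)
Your treatment of goodness is essentially the paper's own: invoke commensurability invariance of goodness, reduce to $\Z^k$ or the integral Heisenberg group, and then apply Lemma~\ref{goodexact} to the central extension $1\to \Z\to D_0\to \Z^{2m-2}\to 1$ (or the analogous tower for $\Z^k$). That part is fine.

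For separability the paper does not argue at all: it simply cites McReynolds \cite{McR}, Theorem~1.3 and Corollaries~4.1--4.2. Your sketch is in the right spirit --- reduce to showing (A) that $D$ itself is separable in $\G$ and (B) that $\G$ induces the full profinite topology on $D$, then combine with the fact that polycyclic groups are LERF --- and indeed (A)${}+{}$(B)${}+{}$LERF of $D$ does imply the conclusion via a closure argument in $\wh\G$. But there are two points where your write-up slips. First, your invocation of ``Lemma~\ref{inducefulltop} applied in reverse'' is backwards: that lemma deduces the full-induced-topology statement \emph{from} separability of the finite-index subgroups of $D$, not the other way around; you cannot use it to produce separability. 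Second, your step (ii), ``realise $H\le D$ as the intersection with $D$ of a congruence subgroup of $\G$'', makes no sense for $H$ of infinite index in $D$; what you actually need (and what your final paragraph is really groping towards) is separability in $\G$ of each \emph{finite-index} subgroup of $D$, after which the infinite-index case follows from the LERF property of $D$ as above. Establishing that finite-index separability from congruence considerations alone is exactly the non-trivial content of McReynolds' argument, and it is more delicate than ``congruence subgroups of $\G$ meet $D$ cofinally'': cofinality gives you (B), but not (A) nor separability of the finite-index subgroups. (As an aside, your appeal to Agol--Wise for LERF of hyperbolic $n$-manifold groups is only valid for $n=3$; for $n\ge 4$ no such result is available, which is another reason the arithmetic/peripheral argument of \cite{McR} is needed.)
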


\begin{proof} The first part of the lemma is due to McReynolds; 
see  \cite[Theorem 1.3 and Corollaries 4.1 and 4.2]{McR}.
For the second part, it suffices to show that the groups $\Z^k$ and
$H_k$ are good, because goodness is preserved by commensurability; 
see \cite[Lemma 10]{GJZ}.  As noted above, both of these groups are extensions of
a free abelian group by $\Z$, and so they are good by Lemma~\ref{goodexact}.
\end{proof}
%

We will also use this theorem, which was proved in \cite[Corollary 1.2]{CLR}:

\begin{theorem}
\label{boundthenumberofcusps}
Let $N$ be any positive integer. There are only finitely many commensurability
classes of non-uniform arithmetic lattices in $\PSL(2,{\CC})$ that
contain a lattice $\Gamma$ with the property that ${\bf H}^3/\Gamma$ has at most $N$ cusps.
\end{theorem}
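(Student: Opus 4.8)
The plan is to reduce the statement to a fact about class numbers of imaginary quadratic fields, and then to quote a lower bound for those class numbers. First recall that a non-uniform arithmetic lattice in $\PSL(2,\CC)$ is commensurable with a Bianchi group $\PSL(2,O_K)$, where $O_K$ is the ring of integers of an imaginary quadratic field $K=\Q(\sqrt{-d})$, and that its wide commensurability class (commensurability up to conjugacy) is determined by $K$ alone, the invariant quaternion algebra being $M_2(K)$ in the non-uniform case; thus the commensurability classes in question correspond bijectively to imaginary quadratic fields $K$. The classical input I would use is that the number of cusps of the orbifold ${\bf H}^3/\PSL(2,O_K)$ equals the class number $h_K$: the parabolic fixed points of any lattice in the class lie on $\partial{\bf H}^3=\CC\cup\{\infty\}$ inside $K\cup\{\infty\}$, and the $\PSL(2,O_K)$-orbits on $K\cup\{\infty\}$ are parametrised by the ideal class group of $O_K$, the orbit of $a/b$ (with $a,b\in O_K$, and $\infty=1/0$) corresponding to the class of the ideal $aO_K+bO_K$.

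I would then prove that the minimum number of cusps of an orbifold in the commensurability class of $K$ tends to infinity with $|d_K|$. Two points are needed. (i) Every lattice $\Gamma$ in the class is conjugate into some maximal arithmetic lattice $M$ of the same class, and the covering ${\bf H}^3/\Gamma\to{\bf H}^3/M$ induces a surjection on sets of cusps; hence if some $\Gamma$ in the class of $K$ has at most $N$ cusps, so does some maximal lattice $M$ in that class. (ii) Every maximal arithmetic lattice $M$ in the commensurability class of $M_2(K)$ contains a conjugate of $\PSL(2,O_K)$ (all maximal orders of $M_2(K)$ being conjugate to $M_2(O_K)$, since $K$ is a field), and by Borel's classification of maximal arithmetic groups in this class the index $[M:\PSL(2,O_K)]$, after suitable conjugation, is at most $2^{\,\omega(d_K)+c}$ for an absolute constant $c$, where $\omega(d_K)$ is the number of primes dividing the discriminant; this bound reflects the $2$-torsion of the ideal class group (genus theory) and a bounded contribution from units. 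Counting cusps along the covering ${\bf H}^3/\PSL(2,O_K)\to{\bf H}^3/M$ --- equivalently, viewing the cusps of $M$ as the orbits of the $h_K$ cusps of $\PSL(2,O_K)$ under a group of order at most $2^{\,\omega(d_K)+c}$ --- then shows that $M$ has at least $h_K/2^{\,\omega(d_K)+c}$ cusps.

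Finally I would combine this with the elementary estimate $2^{\omega(n)}=n^{o(1)}$ and the Siegel (Brauer--Siegel) bound $h_K=|d_K|^{1/2+o(1)}$; for an effective argument one may instead invoke the Goldfeld--Gross--Zagier lower bound $h_K\gg\log|d_K|$, or simply Heilbronn's theorem that only finitely many imaginary quadratic fields have a given class number. In any case $h_K/2^{\,\omega(d_K)+c}\to\infty$ as $|d_K|\to\infty$, so for fixed $N$ the inequality ``$M$ has at most $N$ cusps'' holds for only finitely many fields $K$, hence for only finitely many commensurability classes; by (i) these are exactly the classes containing a lattice with at most $N$ cusps, which is the assertion. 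The step I expect to be the main obstacle is (ii): identifying the maximal lattices inside the $M_2(K)$ commensurability class and bounding $[M:\PSL(2,O_K)]$ by a subpolynomial function of $d_K$ using genus theory and the structure of the normalizer of a maximal order; the remaining work is routine bookkeeping with the ``cusps as orbits on $K\cup\{\infty\}$'' dictionary together with the cited arithmetic inputs.
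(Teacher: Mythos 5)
First, a bookkeeping point: the paper does not prove this statement itself; it quotes it as Corollary 1.2 of Chinburg--Long--Reid \cite{CLR}, and your outline is essentially an attempt to reconstruct that argument (cusps of the Bianchi orbifold $=h_K$, pass to maximal lattices, bound the cusp loss, then apply class-number growth). The strategy is the right one, but your step (ii) contains a genuine gap. It is not true that every maximal arithmetic lattice $M$ in the commensurability class of $\PSL(2,O_K)$ contains a conjugate of $\PSL(2,O_K)$, nor that all maximal orders of $M_2(K)$ are conjugate: the conjugacy classes of maximal orders are parametrised by ${\rm Cl}(K)/{\rm Cl}(K)^2$, and, by Borel's classification, the maximal elements of the commensurability class are the normalizers of Eichler orders of square-free level $S$; for $S\neq\emptyset$ such a normalizer contains the image of the Eichler unit group (a congruence subgroup of $\Gamma_0$-type) but no conjugate of $\PSL(2,O_K)$ at all. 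So the index bound $[M:\PSL(2,O_K)]\le 2^{\omega(d_K)+c}$ is not meaningful as stated, and your cusp estimate $h_K/2^{\omega(d_K)+c}$ does not follow. The repair is exactly the computation carried out in \cite{CLR}: one counts the cusps of the level-$S$ Eichler group (roughly $h_K\cdot 2^{|S|}$) and shows that the Atkin--Lehner/normalizer quotient, an elementary abelian $2$-group of rank at most $|S|+\omega(d_K)+O(1)$, can only divide this by $2^{|S|+\omega(d_K)+O(1)}$, so that the $2^{|S|}$ factors cancel and every maximal lattice has at least $c\,h_K/2^{\omega(d_K)}$ cusps, uniformly in $S$. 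Your step (i) (a lattice with at most $N$ cusps lies in a maximal lattice with at most $N$ cusps) is fine.

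A second, smaller inaccuracy concerns the analytic input. What you must show is that $h_K/2^{\omega(d_K)+c}\to\infty$, and of the results you list only Siegel's (ineffective) bound $h_K=|d_K|^{1/2+o(1)}$ achieves this, since $2^{\omega(d_K)}=|d_K|^{o(1)}$. Heilbronn's theorem alone is not enough (it bounds $h_K$, not $h_K/2^{\omega}$), and the effective Goldfeld--Gross--Zagier bound $h_K\gg(\log|d_K|)^{1-\epsilon}$ is genuinely too weak, because $2^{\omega(d)}$ can grow faster than any power of $\log d$; indeed this is precisely why the finiteness statement, as in \cite{CLR}, is ineffective. With step (ii) replaced by the Eichler-order cusp count and Siegel's theorem as the arithmetic input, your plan does become the proof in the cited reference.
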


\begin{proof}[{\bf Proof of Theorem~\ref{mainbianchi}}] 
As before, Theorem~\ref{allfinite} implies that $\widehat{\G}_1\cong \widehat{\G}_2$.
First note that if necessary we can replace $\G_1$ and $\G_2$ by
subgroups of finite index so as to assume that they are
torsion-free. Indeed, if we first pass to a finite index subgroup of
$\G_1$ (which we will continue to refer to as $\G_1$)  to arrange
that $\G_1$ is torsion-free, then Corollary~\ref{same} will provides us with
a subgroup of finite index in $\G_2$ (which again we will continue to
refer to as $\G_2$) so that $\widehat{\G}_1\cong \widehat{\G}_2$. 
At this point, if $\G_2$ is not torsion-free, then we can pass to a further subgroup of
finite index in $\G_2$ that is torsion-free, and use Corollary
\ref{same} again to pass to the corresponding subgroup of $\G_1$.

A standard Alexander-Lefschetz duality argument 
implies that $b_1(\Gamma_1)$ is positive,
so $b_1(\Gamma_2)$ is positive too, by Corollary~\ref{sameb1}. 
Given this, it follows that $\G_2$ cannot be a
lattice  in a group of real rank at least $2$, because the Margulis normal
subgroup theorem \cite{Marg} implies that such groups 
have finite abelianisation (note that this is where irreducibility is invoked).
Similarly, $\G_2$ cannot be a lattice in a rank 1 Lie group with property (T), 
and this rules out ${\rm{Sp}}(n,1)$ ($n\geq 2$) and the isometry
group of the Cayley hyperbolic plane. The
only remaining possibilities (up to finite index) are 
$(\P)\SO_0(n,1)$ and $(\P)\SU(n,1)$. 

We dealt with $\SO_0(2,1)=\PSL(2,{\R})$ in Theorem~\ref{main1}.  
Hence it remains to rule out $\SO_0(n,1)$ for $n\geq 4$ and $\SU(m,1)$
for $m\geq 2$. In these cases, as discussed above, we have a peripheral
subgroup $D<\G_2$ that is good, with the property that the closure of 
its image in $\widehat{\Gamma}_2$ is isomorphic to $\widehat D$. 
But $D=\Z^{n-1}$ or $H_{2m-1}$ is the fundamental group of a closed,
orientable manifold, of dimension $n-1\ge 3$ in the former case, and
dimension $2m-1\geq 3$ in the latter. As such, $D$ satisfies Poincar\'e
duality with coefficients in an arbitrary finite field. It therefore
follows from Corollary~\ref{cdboundapply} that $\widehat{\Gamma}_2$
cannot be isomorphic to the profinite completion of a good group of
cohomological dimension less than $3$. On the other hand, $\G_1$ is such a group: it
has cohomological dimension $2$ because it is the fundamental group of
an aspherical 3-manifold with non-empty boundary, and it is good by
Theorem~\ref{good}.  At this point, we have proved that $\G_2$ is a
lattice in ${\rm{SO}}_0(3,1)\cong{\rm{PSL}}(2,\Bbb C)$.
%
%
%

To complete the proof of Theorem~\ref{mainbianchi}, we suppose now that
$\G_1$ and $\G_2$ are non-uniform arithmetic lattices in $\PSL(2,\Bbb C)$
and that $b_1(\Gamma_1)=n$. Then $b_1(\Gamma_2) = n$ by Corollary
\ref{sameb1}, and so by a standard fact about 3-manifold
groups with torus boundary components, the number of cusps of 
${\bf H}^3/\Gamma_2$ is at most $n$. 
Application of Theorem~\ref{boundthenumberofcusps} now  completes the 
proof of Theorem~\ref{mainbianchi}. 
\end{proof}

If we drop the assumption of irreducibility of the lattice $\G_2$ 
in Theorem~\ref{mainbianchi}, then some observations can be made, 
but at this stage we are unable to reach the same conclusion. 
For instance, at present we are unable to answer the following
question.

\begin{question}
\label{product}
Let $\Gamma_1$ be a non-uniform lattice in $\PSL(2,{\CC})$, 
and let $\Gamma_2=F\times \Delta$ where $F$ is a free group of rank $r > 1$ 
and $\Delta$ is a cocompact lattice in $\SU(n,1)$.
Is it possible that $\widehat{\Gamma}_1\cong \widehat{\Gamma}_2$?\end{question}

\section{Finite quotients of triangle groups}

In this section we give a more direct proof  of the fact that triangle groups
are distinguished among themselves by their finite quotients, and give some 
explicit quotients that distinguish non-triangle groups from triangle groups.

\begin{theorem}
\label{trianglegroups}
If $\Gamma$ and $\Sigma$ are triangle groups 
for which ${\cal C}(\Gamma) = {\cal C}(\Sigma)$, then $\Gamma\cong \Sigma$.
\end{theorem}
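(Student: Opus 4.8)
The plan is to recover the signature $(0;p,q,r)$ of a hyperbolic triangle group $\Gamma$ from the set $\mathcal{C}(\Gamma)$, equivalently from $\widehat{\Gamma}$ (Theorem~\ref{allfinite}), and then observe that the signature determines the group up to isomorphism. Two pieces of data are available for free from earlier results. First, Theorem~\ref{p:conjClasses} shows that the conjugacy classes of maximal finite subgroups of $\Gamma$ are in bijection with those of $\widehat{\Gamma}$; since the finite subgroups of a Fuchsian group are cyclic, this means the \emph{set of periods}, i.e.\ the multiset of orders $\{p,q,r\}$ of representatives of these conjugacy classes, is an invariant of $\widehat{\Gamma}$ — \emph{provided} we already know $\Gamma$ is a triangle group, so that there really are (generically) three such classes. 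Second, $b_1(\Gamma)=0$ for a cocompact triangle group, and this is an invariant of $\widehat{\Gamma}$ by Corollary~\ref{sameb1}; this, together with the torsion data, will be used to separate triangle groups from other Fuchsian groups and to pin down the genus $g=0$.

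The key steps, in order, are: (1) Note that if $\Gamma$ and $\Sigma$ are both triangle groups with $\mathcal{C}(\Gamma)=\mathcal{C}(\Sigma)$, then $\widehat{\Gamma}\cong\widehat{\Sigma}$ by Theorem~\ref{allfinite}. (2) By Theorem~\ref{p:conjClasses}, the map $\mathfrak{cf}(\Gamma)\to\mathfrak{cf}(\widehat{\Gamma})$ is a bijection, and likewise for $\Sigma$; since $\widehat{\Gamma}\cong\widehat{\Sigma}$, the abstract groups $\Gamma$ and $\Sigma$ have the same number of conjugacy classes of maximal finite subgroups and the same multiset of orders of their representatives. For a $(p,q,r)$-triangle group the maximal finite subgroups are (up to conjugacy) the three cyclic groups of orders $p$, $q$, $r$, with the caveat that if some of $p,q,r$ coincide the number of conjugacy classes drops — but the multiset $\{p,q,r\}$ is still recovered, because conjugacy separability distinguishes the distinct conjugacy classes while the isomorphism of profinite completions matches their orders, and in a triangle group the vertex stabilisers of the three vertices of the fundamental triangle are never conjugate when their orders differ and the pattern of coincidences is itself visible from the number of classes. (3) Hence $\Gamma$ and $\Sigma$ have the same multiset of periods $\{p,q,r\}=\{p',q',r'\}$. (4) Finally, a $(p,q,r)$-triangle group has signature $(0;p,q,r)$ and two Fuchsian groups with the same signature are isomorphic (the presentation is determined by the signature), so $\Gamma\cong\Sigma$.

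The main obstacle I anticipate is step (2): making the bookkeeping of conjugacy classes precise when two or three of the periods coincide. In that degenerate case the triangle group has fewer than three conjugacy classes of maximal finite subgroups, so one cannot naively read off three numbers $p,q,r$; one must argue that the \emph{multiset} $\{p,q,r\}$ (equivalently, the number of conjugacy classes together with the order attached to each) is still an invariant of $\widehat{\Gamma}$. This follows from combining conjugacy separability of Fuchsian groups \cite{St} (which tells us the number of conjugacy classes is the same in $\Gamma$ and $\widehat{\Gamma}$) with the fact that, in a hyperbolic $(p,q,r)$-triangle group, the cyclic subgroup generated by $x_i$ of order $m_i$ at one cone point is conjugate to that at another cone point if and only if the corresponding periods are equal and there is a symmetry of the triangle — and one can check directly from the presentation which periods are forced to lie in a common conjugacy class. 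An alternative, cleaner route for step (2), which I would in fact prefer to present, is to bypass the degenerate-case subtlety entirely by passing to a torsion-free subgroup of finite index as in the proof of Theorem~\ref{main1}: but this loses the triangle structure, so instead I would count, for each prime $\ell$, the number of conjugacy classes of subgroups of order $\ell$ in $\widehat{\Gamma}$, which by Theorem~\ref{p:conjClasses} equals the number in $\Gamma$, and for a $(p,q,r)$-triangle group this directly encodes how many of $p,q,r$ are divisible by $\ell$ in the appropriate sense — from which $\{p,q,r\}$ is reconstructed. Either way, once the period multiset and the genus $g=0$ are known, the conclusion is immediate.
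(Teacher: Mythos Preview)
Your approach is correct, and in fact it is essentially the argument the paper already gives in Section~6 for general cocompact Fuchsian groups with torsion: recover the periods as the orders of representatives of $\conj(\widehat\Gamma)$ via Theorem~\ref{p:conjClasses}, and conclude that equal signatures force isomorphism. Two small corrections. First, your ``main obstacle'' is illusory: in a hyperbolic $(p,q,r)$-triangle group there are \emph{always} exactly three conjugacy classes of maximal finite subgroups, even when periods coincide, because these classes correspond to the three distinct cone points of the quotient orbifold $S^2(p,q,r)$ (algebraically, the images of $x,y,z$ in the abelianisation already separate the classes). So the multiset $\{p,q,r\}$ is read off directly, with no degenerate case to worry about. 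Second, you should say a word about spherical and Euclidean triangle groups, which are not Fuchsian in the paper's sense and hence not covered by Theorem~\ref{p:conjClasses}; they are easily dispatched (finite, respectively virtually abelian).

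The paper's Section~8 proof takes a deliberately different route. Its whole point, announced in the introduction, is to avoid the profinite machinery and instead \emph{exhibit explicit finite quotients} that distinguish $\Delta(r,s,t)$ from $\Delta(u,v,w)$. The argument proceeds by first extracting numerical invariants from the abelianisation and from the Riemann--Hurwitz formula (so $\gcd$, $\lcm$, $rst$, $rs+rt+st$, and $\frac1r+\frac1s+\frac1t$ all match), then using Macbeath's theorem on $(r,s,t)$-generation of $\PSL(2,p)$ to show the triples have the same ``$L_2$-set'', and finally eliminating the remaining possibilities by a case analysis that produces, in each case, a concrete $\PSL(2,p)$ or a direct product thereof that is a quotient of one group but not the other. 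Your route is shorter and more conceptual; the paper's route is longer but constructive, which is precisely the added value Section~8 is meant to provide.
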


We will use the notation 
$\Delta(r,s,t) = \langle\, x,y,z \, | \, xyz = x^r = y^s = z^t = 1 \,\rangle$ 
for the $(r,s,t)$ triangle group.
Each triangle group is called {\em spherical}, {\em Euclidean}  
or {\em hyperbolic\/} according to whether the quantity $1/r+1/s+1/t$ is greater 
than, equal to or less than 1, respectively. 
Note that $\Delta(r,s,t)$ is isomorphic to $\Delta(u,v,w)$ whenever the triple 
$(u,v,w)$ is a permutation of the triple $(r,s,t)$, and hence up to isomorphism we may assume 
that $r \le s \le t$. 

The spherical triangle groups are $\Delta(1,n,n)$, $\Delta(2,2,n)$, $\Delta(2,3,3)$, 
$\Delta(2,3,4)$ and $\Delta(2,3,5)$, which are isomorphic to $C_n$ (cyclic), 
$D_n$ (dihedral of order $2n$), $A_4$, $S_4$ and $A_5$, respectively. 
The Euclidean triangle groups are  $\Delta(2,3,6)$, $\Delta(2,4,4)$ and $\Delta(3,3,3)$, 
each which is an extension of a free abelian group of rank $2$ by a cyclic group $C_t$ 
(with $t = 6, 4$ and $3$, respectively). In particular, the spherical triangle groups 
are finite, while the Euclidean triangle groups are infinite but soluble.  In contrast, 
all hyperbolic triangle groups are infinite but insoluble.  See \cite{CM} for further details. 

The latter categorisation makes the spherical and Euclidean triangle groups easy to distinguish 
from others by their finite quotients, and so we will restrict our attention to the hyperbolic 
ones, which are the Fuchsian groups with signature $(0;r,s,t)$ where $1/r+1/s+1/t < 1$.  
The most famous of these is $\Delta(2,3,7)$, as it gives the largest value of $1/r+1/s+1/t$ 
less than $1$, and its non-trivial quotients are the {\em Hurwitz groups} (see \cite{Con}). 

We will define a finite group $G$ to be $(r,s,t)$-{\em generated\/} if $G$ can be generated 
by elements $a$, $b$ and $c$ of (precise) orders $r$, $s$ and $t$ such that $abc = 1$.
For any hyperbolic triple $(r,s,t)$, the set of $(r,s,t)$-generated groups is non-empty, 
by residual finiteness of $\Delta(r,s,t)$, but in most cases $\Delta(r,s,t)$ can 
also have `non-smooth' quotients, in which the orders of the generators are not preserved. 

The following theorem is a direct consequence of observations 
made by Macbeath \cite{Mac} on $(r,s,t)$-generation of the groups $\PSL(2,q)$, 
and will be critical to our proof of Theorem~\ref{trianglegroups}: 

\begin{theorem}
\label{macbeathsthm}
Let $(r,s,t)$ be any hyperbolic triple other than $(2,5,5)$, $(3,4,4)$, $(3,3,5)$, $(3,5,5)$ 
or $(5,5,5)$, and let $p$ be any prime.  If $p^f$ is the smallest power of $p$ for 
which $\PSL(2,p^f)$ contains elements of orders $r$, $s$ and $t$, 
then either $\PSL(2,p^f)$ is $(r,s,t)$-generated, 
or $f$ is even and $\PGL(2,p^{f/2})$ is $(r,s,t)$-generated. 
In particular, $\PSL(2,p)$ itself is $(r,s,t)$-generated whenever it contains 
elements of orders $r$, $s$ and $t$. 
\end{theorem}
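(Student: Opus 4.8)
\textbf{Proof proposal for Theorem~\ref{macbeathsthm}.}

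The plan is to reduce everything to the standard facts about element orders and generating triples in the two-dimensional projective linear groups, and then to invoke the classification-free counting argument of Macbeath. First I would recall the arithmetic of element orders in $\PSL(2,q)$ and $\PGL(2,q)$: for $q=p^f$ odd, a non-identity semisimple element lies in a torus of order $q-1$ (split) or $q+1$ (non-split), so its order divides $(q-1)/\gcd(2,q-1)$ or $(q+1)/\gcd(2,q-1)$ in $\PSL(2,q)$, and divides $q\pm1$ in $\PGL(2,q)$; unipotent elements have order $p$. From this one extracts the precise condition for $\PSL(2,p^f)$ to contain an element of a given order $n$ coprime to $p$, namely that $p^f\equiv\pm1\pmod{n'}$ for the appropriate $n'\in\{n,2n\}$ governed by whether $-I$ is needed. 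This lets me identify the minimal $f$ in the statement and, crucially, observe that when $\PSL(2,p^{f/2})$ already contains an element of order $n$ then so does $\PGL(2,p^{f/2})\le\PSL(2,p^f)$, which is the mechanism behind the $\PGL$ alternative: if the three required orders $r,s,t$ all appear \emph{below} level $p^f$ (i.e.\ already in $\PGL(2,p^{f/2})$), then the generating triple, if it exists at all, must be sought in that smaller group.

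The heart of the argument is Macbeath's count. For a finite group $G$ and conjugacy classes $\mathcal{A},\mathcal{B},\mathcal{C}$ of elements of orders $r,s,t$, the number of solutions of $abc=1$ with $(a,b,c)\in\mathcal{A}\times\mathcal{B}\times\mathcal{C}$ is given by the classical character-sum formula $\frac{|\mathcal{A}||\mathcal{B}||\mathcal{C}|}{|G|}\sum_{\chi}\frac{\chi(a)\chi(b)\chi(c)}{\chi(1)}$. For $G=\PSL(2,q)$ the character table is completely explicit, so I would estimate this sum: the trivial character contributes the main term $\frac{|\mathcal{A}||\mathcal{B}||\mathcal{C}|}{|G|}$, which is of size roughly $q^2$, while every non-trivial character contributes $O(q)$ in absolute value (character values in $\PSL(2,q)$ are bounded independently of $q$, and $\chi(1)\ge (q-1)/2$), and there are $O(q)$ characters; hence the total number of triples is $(1+o(1))\frac{|\mathcal{A}||\mathcal{B}||\mathcal{C}|}{|G|}$, which is positive and in fact large once $q$ is not too small. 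Subtracting off the triples that generate a proper subgroup — these are confined, by Dickson's subgroup theorem, to dihedral, Borel, $A_4$, $S_4$, $A_5$, or subfield subgroups, each of which contributes only $O(q)$ triples (this is where one must check that $r,s,t$ are \emph{not} all realised inside a fixed small group such as $A_5$, which is exactly why the five triples $(2,5,5),(3,4,4),(3,3,5),(3,5,5),(5,5,5)$ are excluded) — one concludes that $\PSL(2,p^f)$ is $(r,s,t)$-generated. When the three orders already occur in $\PGL(2,p^{f/2})$ but not all of them in $\PSL(2,p^{f/2})$, the same counting argument applied to $\PGL(2,p^{f/2})$ (whose character table is equally explicit) gives an $(r,s,t)$-generating triple there instead; and when $f=1$ there is no subfield subgroup to worry about and $\PSL(2,p)$ itself works. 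All of this is essentially the content of Macbeath's paper \cite{Mac}, so in the write-up I would state the estimates and cite \cite{Mac} rather than reproduce the character computation in full.

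The main obstacle — and the only genuinely delicate point — is the bookkeeping around the $\PSL$ versus $\PGL$ dichotomy and the precise identification of the minimal field: one must pin down exactly when an element of order $n$ requiring the $\gcd(2,q-1)$ correction forces passage to a quadratic extension, and then verify that in that situation the correct home for the generating triple is $\PGL(2,p^{f/2})$ (with $f$ necessarily even) rather than $\PSL(2,p^f)$. Handling the prime $p=2$ and small primes dividing $rst$ needs a little care here, as does confirming that the listed exceptional triples are precisely those for which all three orders are simultaneously absorbed by one of the bounded subgroups $A_4$, $S_4$, $A_5$ (so that the counting argument's error term swamps the main term). Once these case distinctions are organised cleanly, the theorem follows directly from Macbeath's estimates.
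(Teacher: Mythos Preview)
The paper does not actually prove this theorem: it is stated as ``a direct consequence of observations made by Macbeath \cite{Mac}'' and is simply cited. So there is no proof in the paper to compare against; your proposal, which also ultimately appeals to \cite{Mac}, is in that sense aligned with what the authors do.

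That said, the argument you sketch is not quite Macbeath's. Macbeath's 1969 paper does not proceed via the Frobenius character-sum formula and asymptotic estimates; it uses the rigidity of trace triples in $\SL(2,q)$. One fixes traces $\alpha,\beta,\gamma$ for $A,B,AB$ and exploits the fact that (generically) such a triple of traces determines the pair $(A,B)$ up to conjugacy. The question of whether $\langle A,B\rangle$ is all of $\PSL(2,q)$, a subfield group, or $\PGL(2,p^{f/2})$ is then decided by the subfield generated by $\alpha,\beta,\gamma$ and whether the associated quadratic form is ``singular'' in Macbeath's sense. The exceptional triples arise not from an error term overwhelming a main term, but from the trace triple landing in a finite list corresponding to $A_4,S_4,A_5$. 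Your identification of \emph{why} those five triples are excluded is correct, but the mechanism is algebraic rather than analytic.

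On the character-sum sketch itself: the estimate ``total number of triples is $(1+o(1))|\mathcal{A}||\mathcal{B}||\mathcal{C}|/|G|$'' is too optimistic as written. With $\chi(1)\ge (q-1)/2$, character values bounded by $2$, and roughly $q$ characters, the non-trivial contribution to $\sum_\chi \chi(a)\chi(b)\chi(c)/\chi(1)$ is bounded only by an absolute constant, not by $o(1)$, so you do not get $(1+o(1))$ without a finer computation. This approach can be made to work for $\PSL(2,q)$ because the character table is explicit enough to evaluate the sum more precisely, but the estimate you wrote would not suffice on its own.
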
  

The triples $(2,5,5)$, $(3,4,4)$, $(3,3,5)$, $(3,5,5)$ and $(5,5,5)$, together 
with the spherical triples and the triple $(3,3,3)$, were called {\em exceptional} 
by Macbeath.  Note that the group $A_5 \cong \PSL(2,5)$ is 
$(2,5,5)$-, $(3,3,5)$-, $(3,5,5)$- and $(5,5,5)$-generated, while the group $S_4$ 
is $(3,4,4)$-generated.  

\smallskip
We will also make use of the fact that if the finite group $G$ is $(r,s,t)$-generated, 
then $G$ is a group of conformal automorphisms of a compact Riemann surface $S$ 
of genus $g$, where 
$$
2-2g = |G|\left(\frac{1}{r}+\frac{1}{s}+\frac{1}{t}-1\right)  
$$     
as a consequence of the Riemann-Hurwitz formula.
The kernel $K$ of the corresponding smooth homomorphism from $\Delta(r,s,t)$ 
onto $G$ is the fundamental group of $S,$ and is itself a Fuchsian group, 
with signature $(2g; - )$.  In particular, $K$ is generated by $2g$ elements 
$a_1, b_1,\ldots, a_g, b_g$ subject to a single defining relation 
$[a_1,b_1] \ldots [a_g,b_g] = 1$. 
Now for any positive integer $n$, the subgroup $K'K^{(n)}$ generated 
by the derived subgroup $K'$ and the $n$th powers of all elements of $K$ 
is characteristic in $K$ and hence normal in $\Delta(r,s,t)$, and the quotient 
$\Delta(r,s,t)/K'K^{(n)}$ is then isomorphic to an extension by $G$ of an abelian 
subgroup $K/K'K^{(n)}$ of rank $2g$ and exponent $n$ (and order $n^{2g}$). 
Thus for any such $G$, we can construct an infinite family of smooth 
quotients of $\Delta(r,s,t)$, to help distinguish $\Delta(r,s,t)$ from 
other triangle groups. 

\medskip
We can now proceed to prove Theorem~\ref{trianglegroups}. 
To do that, we will suppose that $\Gamma = \Delta(r,s,t)$ and $\Sigma = \Delta(u,v,w)$, 
where $(r,s,t)$ and $(u,v,w)$ are hyperbolic triples with $r \le s \le t$ and $u \le v \le w$, 
and that ${\cal C}(\Gamma) = {\cal C}(\Sigma)$. 
We will prove in steps that $(r,s,t) = (u,v,w)$. 


\begin{lemma}
\label{easyones} 
${}$\\[+2pt]  
{\rm (a)} $\,\gcd(r,s,t) = \gcd(u,v,w)$, \ and \  
{\rm (b)} ${\ds \,\frac{rst}{\lcm(r,s,t)} = \frac{uvw}{\lcm(u,v,w)}}$, \ and also \\[+2pt]   
{\rm (c)} $\,\lcm(\gcd(r,s),\gcd(r,t),\gcd(s,t)) = \lcm(\gcd(u,v),\gcd(u,w),\gcd(v,w))$. 
\end{lemma}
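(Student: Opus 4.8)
The plan is to obtain all three equalities from a single fact: $\mathcal{C}(\Gamma)=\mathcal{C}(\Sigma)$ forces $\Gamma^{\ab}\cong\Sigma^{\ab}$. For any triangle group $\Delta(r,s,t)$ the abelianisation is finite (of order $\gcd(rs,rt,st)$, as the computation below shows), so $\Gamma^{\ab}$ is itself one of the finite quotients recorded in $\mathcal{C}(\Gamma)=\mathcal{C}(\Sigma)$; hence $\Gamma^{\ab}$ is a quotient of $\Sigma$, and so of $\Sigma^{\ab}$. By symmetry $\Sigma^{\ab}$ is a quotient of $\Gamma^{\ab}$, and since both groups are finite this forces equality of orders and hence $\Gamma^{\ab}\cong\Sigma^{\ab}$. (Alternatively, Theorem~\ref{allfinite} gives $\widehat{\Gamma}\cong\widehat{\Sigma}$, and $\Gamma^{\ab}$ is recovered as the finite maximal abelian quotient of $\widehat{\Gamma}$.)

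Next I would compute $\Gamma^{\ab}$ directly from the presentation $\langle x,y,z\mid xyz=x^{r}=y^{s}=z^{t}=1\rangle$. Abelianising and eliminating $z=(xy)^{-1}$, one sees (writing the group additively) that $\Gamma^{\ab}$ is $\Z^{2}$ modulo the subgroup generated by $(r,0)$, $(0,s)$ and $(t,t)$. Taking a Smith normal form of the matrix with these three rows gives $\Gamma^{\ab}\cong\Z/d_{1}\oplus\Z/d_{2}$ with $d_{1}\mid d_{2}$, where $d_{1}=\gcd(r,s,t)$ is the gcd of the entries and $d_{1}d_{2}=\gcd(rs,rt,st)$ is the gcd of the $2\times 2$ minors. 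The pair $(d_{1},d_{2})$ is the list of invariant factors of $\Gamma^{\ab}$, hence an isomorphism invariant of that group; so the first paragraph gives $d_{1}(\Gamma)=d_{1}(\Sigma)$ and $d_{2}(\Gamma)=d_{2}(\Sigma)$. The first of these is exactly part~(a).

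For (b) and (c) I would pass to $p$-adic valuations. Fix a prime $p$ and let $a\le b\le c$ be the valuations $v_{p}(r),v_{p}(s),v_{p}(t)$ listed in non-decreasing order. A routine check gives $v_{p}(d_{1})=a$ and $v_{p}(d_{1}d_{2})=\min(a+b,a+c,b+c)=a+b$, whence $v_{p}(d_{2})=b$; moreover $v_{p}\bigl(rst/\lcm(r,s,t)\bigr)=(a+b+c)-c=a+b$ and $v_{p}\bigl(\lcm(\gcd(r,s),\gcd(r,t),\gcd(s,t))\bigr)=\max(\min(a,b),\min(a,c),\min(b,c))=b$. Since these identities hold at every prime, $rst/\lcm(r,s,t)=d_{1}d_{2}$ and $\lcm(\gcd(r,s),\gcd(r,t),\gcd(s,t))=d_{2}$ as integers, and likewise for $\Sigma$; combining with $d_{1}(\Gamma)d_{2}(\Gamma)=d_{1}(\Sigma)d_{2}(\Sigma)$ and $d_{2}(\Gamma)=d_{2}(\Sigma)$ yields (b) and (c) respectively. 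I do not expect a genuine obstacle here: the only points needing care are the Smith-normal-form computation of $\Gamma^{\ab}$, the valuation bookkeeping, and the (trivial) remark that $\Gamma^{\ab}$ is finite so that it really does occur in $\mathcal{C}(\Gamma)$.
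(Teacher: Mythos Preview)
Your proof is correct and follows essentially the same approach as the paper: both arguments compute the abelianisation of $\Delta(r,s,t)$ as $C_{d}\times C_{e}$ with $d=\gcd(r,s,t)$ and $e=\lcm(\gcd(r,s),\gcd(r,t),\gcd(s,t))$ (so that $de=rst/\lcm(r,s,t)$), and then read off (a)--(c) from the isomorphism $\Gamma^{\ab}\cong\Sigma^{\ab}$. The paper simply asserts this description of the abelianisation, whereas you supply the Smith-normal-form and $p$-adic verification; that extra detail is fine and fills in exactly what the paper leaves implicit.
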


\begin{proof} 
The abelianisation of $\Gamma = \Delta(r,s,t)$ is $C_d \times C_e$, 
where $d = \gcd(r,s,t)$ and $de = rst/\lcm(r,s,t)$ 
and $e = \lcm(\gcd(r,s),\gcd(r,t),\gcd(s,t))$, and similarly for $\Sigma = \Delta(u,v,w)$. 
Since $\Gamma$ and $\Sigma$ have the same finite abelian quotients, the results follow.
\end{proof}

\begin{lemma}
\label{sameeuler}
$\Gamma$ and $\Sigma$ have the same set of 
$(r,s,t)$-generated quotients, and the same set of $(u,v,w)$-generated quotients. 
Moreover, $\frac{1}{r}+\frac{1}{s}+\frac{1}{t} = \frac{1}{u}+\frac{1}{v}+\frac{1}{w}$.
\end{lemma}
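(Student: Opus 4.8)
The plan is to prove the two assertions separately. The statement about $(r,s,t)$- and $(u,v,w)$-generated quotients is essentially formal, while the equality of the Euler sums is extracted from the isomorphism $\wh\Gamma\cong\wh\Sigma$ (Theorem~\ref{allfinite}) via the correspondence between finite-index subgroups of the two groups.

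For the first assertion, the key observation is that a finite group $Q$ is $(r,s,t)$-generated if and only if it is a quotient of $\Delta(r,s,t)$: a generating triple $a,b,c$ of orders $r,s,t$ with $abc=1$ respects the defining relations of $\Delta(r,s,t)$ and hence determines an epimorphism $\Delta(r,s,t)\twoheadrightarrow Q$, and conversely the images of $x,y,z$ under any epimorphism out of $\Delta(r,s,t)$ form such a triple. Thus the $(r,s,t)$-generated quotients of $\Gamma=\Delta(r,s,t)$ are precisely all $(r,s,t)$-generated finite groups; each such group lies in $\C(\Gamma)=\C(\Sigma)$ and is therefore a quotient of $\Sigma$ as well, so it is an $(r,s,t)$-generated quotient of $\Sigma$. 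Since the reverse inclusion is automatic, the two sets agree, and running the identical argument with $\Sigma=\Delta(u,v,w)$ in the role of $\Gamma$ handles the $(u,v,w)$-generated quotients.

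For the Euler sum, I would proceed as follows. Since $\Gamma=\Delta(r,s,t)$ is residually finite and virtually torsion-free, it contains a torsion-free subgroup $K$ of finite index; being a torsion-free finite-index subgroup of a cocompact Fuchsian group sitting inside $\PSL(2,\R)$, $K$ is the fundamental group of a closed orientable surface, so that $\chi(K)=2-b_1(K)$ and $\chi(K)=[\Gamma:K]\,\chi(\Gamma)$ by multiplicativity of the rational Euler characteristic. Now Corollary~\ref{same} supplies a subgroup $L$ of finite index in $\Sigma$ with $[\Sigma:L]=[\Gamma:K]$ and $\wh L\cong\wh K$. Because $K$ is a torsion-free Fuchsian group, $\wh K$ is torsion-free by Corollary~\ref{c:hatTF}, hence so is $\wh L$, and hence so is $L$ (which embeds in it); thus $L$ is a torsion-free finite-index subgroup of the cocompact Fuchsian group $\Sigma<\PSL(2,\R)$ and is itself a closed orientable surface group, giving $\chi(L)=2-b_1(L)$ and $\chi(L)=[\Sigma:L]\,\chi(\Sigma)$. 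By Corollary~\ref{sameb1} we have $b_1(K)=b_1(L)$, so $\chi(K)=\chi(L)$; dividing by $[\Gamma:K]=[\Sigma:L]$ yields $\chi(\Gamma)=\chi(\Sigma)$, and substituting $\chi(\Delta(r,s,t))=\tfrac1r+\tfrac1s+\tfrac1t-1$ together with the analogous expression for $\Sigma$ gives the asserted identity.

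The one step demanding genuine care — and thus the point I regard as the \emph{main obstacle}, albeit a mild one — is ensuring that the companion subgroup $L$ of $\Sigma$ produced by Corollary~\ref{same} really is a \emph{closed} orientable surface group, and not, say, a free group with the same first Betti number: this is exactly where torsion-freeness of $\wh K$ (Corollary~\ref{c:hatTF}) must be combined with the cocompactness of $\Sigma$, after which Corollary~\ref{sameb1} pins down its Euler characteristic. Everything else is routine bookkeeping with the multiplicativity of $\chi$.
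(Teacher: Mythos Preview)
Your proof is correct, though there is a small slip in the first paragraph: the ``if and only if'' should be ``only if'', since a quotient of $\Delta(r,s,t)$ may be generated by images whose orders are proper divisors of $r,s,t$ (for instance $\Delta(2,3,9)$ surjects onto $C_3$). Only the ``only if'' direction is actually used, so the argument is unaffected.

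For the Euler-sum equality you take a genuinely different route from the paper. The paper argues directly with explicit finite quotients, in keeping with the declared aim of Section~8: it fixes an $(r,s,t)$-generated quotient $G$, observes that for $n$ coprime to $|G|$ the largest quotient of $\Gamma$ that is an extension of an abelian group of exponent $n$ by $G$ has order $n^{2g}|G|$ with $2-2g=|G|\bigl(\tfrac1r+\tfrac1s+\tfrac1t-1\bigr)$, does the same on the $\Sigma$ side, and reads off the equality by comparing exponents. Your argument instead invokes the profinite machinery of Sections~2 and~5 (Corollaries~\ref{same}, \ref{sameb1} and~\ref{c:hatTF}) to match up torsion-free finite-index subgroups of $\Gamma$ and $\Sigma$ and their Euler characteristics. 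Your approach is cleaner and more conceptual, but the paper's approach has the virtue of actually \emph{exhibiting} the finite quotients that detect a discrepancy in Euler characteristic --- which is the whole point of this section --- and along the way it proves a bit more: every $(r,s,t)$-generated finite group is also $(u,v,w)$-generated, and conversely.
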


\begin{proof}~The first two assertions are easy, since 
$\Gamma$ and $\Sigma$ have exactly the same finite quotients. 
For the last part, without loss of generality we may 
suppose $\frac{1}{r}+\frac{1}{s}+\frac{1}{t} \le \frac{1}{u}+\frac{1}{v}+\frac{1}{w}$. 
Now let $G$ be any $(r,s,t)$-generated quotient of $\Gamma$. 
Then $G$ is also a finite quotient of $\Sigma$.  
Next let $u', v'$ and $w'$ be divisors of $u,v$ and $w$ (respectively) such that 
$G$ is $(u',v',w')$-generated, and such that $\frac{1}{u'}+\frac{1}{v'}+\frac{1}{w'}$ is 
as small as possible subject to those conditions. 
Then in particular, 
$\frac{1}{u'}+\frac{1}{v'}+\frac{1}{w'} \ge \frac{1}{u}+\frac{1}{v}+\frac{1}{w} 
\ge \frac{1}{r}+\frac{1}{s}+\frac{1}{t}$. 

\smallskip
For any $n$ coprime to $|G|$, the largest quotient of $\Gamma$ that is an 
extension of an abelian group of exponent $n$ by $G$ has order $n^{2g}|G|$, 
where $2-2g = |G|(\frac{1}{r}+\frac{1}{s}+\frac{1}{t}-1)$. 
On the other hand, the largest quotient of $\Sigma$ that is an 
extension of an abelian group of exponent $n$ by $G$ must be a smooth quotient 
of the $(u',v',w')$ triangle group and hence has order $n^{2g'}|G|$, 
where $2-2g'=|G|(\frac{1}{u'}+\frac{1}{v'}+\frac{1}{w'}-1)$.

\smallskip
Since $\Gamma$ and $\Sigma$ have the same quotients, 
it follows that $g' = g$, and so 
$\frac{1}{u'}+\frac{1}{v'}+\frac{1}{w'} = \frac{1}{r}+\frac{1}{s}+\frac{1}{t}$. 
The inequality obtained at the end of the first paragraph now gives 
$\frac{1}{u}+\frac{1}{v}+\frac{1}{w} = \frac{1}{r}+\frac{1}{s}+\frac{1}{t}$. 
It also gives $\frac{1}{u'}+\frac{1}{v'}+\frac{1}{w'} = \frac{1}{u}+\frac{1}{v}+\frac{1}{w}$, 
so $(u',v',w') = (u,v,w)$, and hence $G$ is $(u,v,w)$-generated.  

\smallskip
In particular, every $(r,s,t)$-generated finite group is also $(u,v,w)$-generated.
The converse holds by the same argument, with the roles of 
$(r,s,t)$ and $(u,v,w)$ reversed. 
\end{proof}

\begin{lemma}
\label{moreconditions}
$rst = uvw$, and $\lcm(r,s,t) = \lcm(u,v,w)$, 
and $rs+rt+st = uv+uw+vw$.
\end{lemma}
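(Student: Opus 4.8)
The plan is to reduce the entire lemma to a single fact of elementary number theory: the least common multiple of a triple is already pinned down by the two invariants controlled in Lemmas~\ref{easyones} and~\ref{sameeuler}. Write $e(a,b,c)=\lcm(\gcd(a,b),\gcd(a,c),\gcd(b,c))$ and $\sigma(a,b,c)=1/a+1/b+1/c$. First I would record the elementary identity
\[
\lcm(a,b,c)\cdot\gcd(a,b,c)\cdot e(a,b,c)=abc,
\]
valid for all positive integers $a,b,c$; this is verified one prime $\ell$ at a time, for if $\alpha\ge\beta\ge\gamma$ are the $\ell$-adic valuations of $a,b,c$ in decreasing order then the three left-hand factors contribute $\alpha$, $\gamma$ and $\beta$ respectively --- the last because the largest of the three pairwise minima of $\alpha,\beta,\gamma$ is their median $\beta$. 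Combined with Lemma~\ref{easyones}(a) and~(c) (equality of $\gcd$ and of $e$ for the two triangle groups), this shows that the assertion $rst=uvw$ is \emph{equivalent} to $\lcm(r,s,t)=\lcm(u,v,w)$; and once that is in hand the remaining assertion is immediate, since $rs+rt+st=\sigma(r,s,t)\,rst=\sigma(u,v,w)\,uvw=uv+uw+vw$ by Lemma~\ref{sameeuler}.

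So the heart of the matter is the claim that for every prime $\ell$,
\[
v_\ell(\lcm(r,s,t))=\max\bigl\{\,v_\ell(e(r,s,t)),\ -v_\ell(\sigma(r,s,t))\,\bigr\}.
\]
Granting this, Lemma~\ref{moreconditions} follows at once: $e(r,s,t)=e(u,v,w)$ because the two triangle groups have the same finite abelian quotients (this is exactly the computation in the proof of Lemma~\ref{easyones}), and $\sigma(r,s,t)=\sigma(u,v,w)$ by Lemma~\ref{sameeuler}; hence $\lcm(r,s,t)$ and $\lcm(u,v,w)$ have the same $\ell$-adic valuation at every prime $\ell$, so they coincide.

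To prove the displayed claim I would fix $\ell$ and again let $a_1\ge a_2\ge a_3\ge 0$ be the $\ell$-adic valuations of $r,s,t$ in decreasing order, so that $v_\ell(\lcm(r,s,t))=a_1$ and $v_\ell(e(r,s,t))=a_2$. One then evaluates $v_\ell(\sigma(r,s,t))=v_\ell(rs+rt+st)-(a_1+a_2+a_3)$. When $a_1>a_2$, the product of the two of $r,s,t$ whose valuations are $a_2$ and $a_3$ has valuation $a_2+a_3$, strictly below the valuations $a_1+a_2$ and $a_1+a_3$ of the other two products; there is therefore no cancellation, $v_\ell(rs+rt+st)=a_2+a_3$ and $v_\ell(\sigma(r,s,t))=-a_1$, so that $\max\{a_2,a_1\}=a_1$, as needed. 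When $a_1=a_2$, the two smallest product-valuations both equal $a_1+a_3$, and although their contributions may or may not cancel they cannot push the valuation below $a_1+a_3$; hence $v_\ell(rs+rt+st)\ge a_1+a_3$, so $-v_\ell(\sigma(r,s,t))\le a_1=v_\ell(e(r,s,t))$ and $\max\{v_\ell(e),-v_\ell(\sigma)\}=a_1$ once more. (The cases $a_1=a_2=a_3$ and $\ell\nmid rst$ are instances of this second alternative.)

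The one delicate point --- and the only real obstacle --- is precisely this second alternative: when the top two $\ell$-valuations among $r,s,t$ coincide, the product carrying the two smallest valuations no longer strictly dominates in $rs+rt+st$, so $-v_\ell(\sigma)$ may fail to detect $a_1$ and one must recover it from $e(r,s,t)$ instead; getting the inequalities the right way round there is what makes the maximum in the formula the correct expression.
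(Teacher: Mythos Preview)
Your proof is correct and takes essentially the same prime-by-prime valuation approach as the paper: both hinge on the observation that when the top two $\ell$-adic valuations among $r,s,t$ differ, the reduced denominator of $\sigma$ recovers the maximum, while otherwise $e(r,s,t)$ already does. The only difference is packaging --- you compress the dichotomy into the single formula $v_\ell(\lcm)=\max\{v_\ell(e),-v_\ell(\sigma)\}$ and then invoke the identity $\lcm\cdot\gcd\cdot e=abc$, whereas the paper recovers all three ordered exponents $\alpha\le\beta\le\gamma$ individually, thereby obtaining the slightly stronger fact (used later, e.g.\ in Case~(a) of the proof of Theorem~\ref{trianglegroups}) that the multisets of prime-power parts of $(r,s,t)$ and $(u,v,w)$ agree.
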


\begin{proof}
For any prime divisor $p$ of $rst$, 
let $p^\alpha$, $p^\beta$ and $p^\gamma$ be the largest powers of $p$ 
dividing $r,s$ and $t$, ordered in such a way that $\alpha \le \beta \le \gamma$.  
Then $p^\alpha$ must be the largest power of $p$ dividing $\gcd(r,s,t)$, 
while $p^\beta$ is the largest power of $p$ dividing $\lcm(\gcd(r,s),\gcd(r,t),\gcd(s,t))$, 
and $p^\gamma$ is the largest power of $p$ dividing $\lcm(r,s,t)$. 
Also $p^{\alpha+\beta+\gamma}$ is the largest power of $p$ dividing $rst$, 
and so $p^{\alpha+\beta}$ is the largest power of $p$ dividing $\frac{rst}{\lcm(r,s,t)}$. 
Furthermore, either $\beta = \gamma$, or $p^\gamma$ is the largest power 
of $p$ dividing the denominator of $\frac{1}{r}+\frac{1}{s}+\frac{1}{t} = \frac{rs+rt+st}{rst}$ 
when the latter is expressed in reduced form.
(To see the last part, note that $rs+rt+st$ is divisible by $p^{\alpha+\beta}$ 
but not $p^{\alpha+\beta+1}$  when $\beta < \gamma$.) 

Hence the largest powers of $p$ dividing $r$,$s$ and $t$ are determined by the 
quantities $\gcd(r,s,t)$, $\frac{rst}{\lcm(r,s,t)}$ and $\frac{1}{r}+\frac{1}{s}+\frac{1}{t}$.  
By Lemmas~\ref{easyones} and~\ref{sameeuler}, these three quantities 
are the same for the triple $(u,v,w)$, and hence the largest powers of $p$ dividing $u$, $v$ 
and $w$ are equal to those for $r$,$s$ and $t$, in some order. 
As this holds for every prime $p$, the stated equalities follow easily. 
\end{proof}

We now deal with many special  cases of Theorem~\ref{trianglegroups}. 

\begin{proposition}
\label{somecases1}
The conclusion of Theorem {\em~\ref{trianglegroups}} holds whenever 

\smallskip
\noindent {\rm (a)}~$(r,s,t)$ is one of the exceptional triples 
$(2,5,5)$, $(3,4,4)$, $(3,3,5)$, $(3,5,5)$ or $(5,5,5)$, or 

\smallskip
\noindent {\rm (b)}~the triples $(r,s,t)$ and $(u,v,w)$ have 
an entry in common, or 

\smallskip
\noindent {\rm (c)}~two or more of $r$, $s$ and $t$ are even.
\end{proposition}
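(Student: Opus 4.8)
The plan is to treat the three cases in turn, using only the arithmetic identities already in hand --- $\gcd(r,s,t)=\gcd(u,v,w)$ from Lemma~\ref{easyones}, and $rst=uvw$, $rs+rt+st=uv+uw+vw$ from Lemma~\ref{moreconditions} --- together with, for part~(c), an analysis of the dihedral quotients of a triangle group. Recall throughout that the periods of a hyperbolic triangle group are all at least $2$.

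For part~(a) I would argue by inspection of the list. Since $rst=uvw$ by Lemma~\ref{moreconditions}, and each of $50,\ 45,\ 75,\ 125$ has a \emph{unique} expression as a product of three integers $\ge 2$, the multiset $\{u,v,w\}$ is forced when $(r,s,t)$ is $(2,5,5)$, $(3,3,5)$, $(3,5,5)$ or $(5,5,5)$. The remaining exceptional triple is $(3,4,4)$, and $48=uvw$ does admit four factorisations into three integers $\ge 2$; but of these only $(3,4,4)$ has $uv+uw+vw=40$, so Lemma~\ref{moreconditions} again pins $(u,v,w)$ down. (Alternatively $(3,4,4)$ is subsumed by part~(c).) For part~(b), suppose that $c$ lies in both $\{r,s,t\}$ and $\{u,v,w\}$, and delete one copy of $c$ from each, leaving pairs $\{a,b\}$ and $\{a',b'\}$. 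Then $rst=uvw$ gives $ab=a'b'$, and writing $rs+rt+st=ab+c(a+b)$ and $uv+uw+vw=a'b'+c(a'+b')$ we get $a+b=a'+b'$ from the second identity of Lemma~\ref{moreconditions}. A pair of positive integers is determined by its sum and product, so $\{a,b\}=\{a',b'\}$ and hence $(r,s,t)=(u,v,w)$.

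For part~(c) I would exploit dihedral quotients, and reduce to part~(b). The crucial point is the characterisation: \emph{for $n\ge 3$, the dihedral group $D_n$ of order $2n$ is a quotient of $\Delta(r,s,t)$ if and only if one of $r,s,t$ is divisible by $n$ and the other two are even.} Indeed, in any surjection $\Delta(r,s,t)\to D_n$ the three standard generators go to elements whose product is the identity, so an even number of their images lie outside the rotation subgroup; this number is not $0$ (else the image would be cyclic), so exactly two images are reflections and one is a rotation; the reflections have order $2$, which forces the two corresponding periods to be even, while the image is the dihedral group generated by the two reflections, of order twice the order of the rotation, so the rotation has order exactly $n$ and the remaining period is divisible by $n$. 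The converse is a direct construction inside $D_n$. Now, under the hypothesis of part~(c), $\Delta(r,s,t)$ has at least two even periods. If exactly two are even, the characterisation shows that $D_n$ ($n\ge 3$) is a quotient exactly when $n$ divides the unique odd period, so the largest $n\ge 3$ for which $D_n$ is a quotient of $\Gamma$ equals that odd period, an odd number $\ge 3$; if all three are even, $D_n$ ($n\ge 3$) is a quotient exactly when $n$ divides some period, so the largest such $n$ is $\max(r,s,t)$, which is even. Since $\Gamma$ and $\Sigma$ have the same finite quotients, their largest dihedral levels agree; in particular $\Sigma$ has some dihedral quotient $D_n$ with $n\ge 3$, so (again by the characterisation) $\Sigma$ too has at least two even periods, and the parity of the common largest dihedral level shows that $\Gamma$ and $\Sigma$ fall in the same subcase and therefore share a period --- the odd period in the first subcase, the largest period in the second. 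This reduces us to part~(b). I expect the verification of the dihedral-quotient characterisation to be the step requiring the most care; in particular one must check that a ``reflection'' generator cannot come from an odd period, which is precisely what forces the other two periods to be even.
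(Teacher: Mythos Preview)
Your proof is correct and follows essentially the same approach as the paper: numerical inspection for (a), the sum--product argument for (b), and dihedral quotients for (c). The only organisational difference is in (c): the paper first uses the abelianisation invariants from Lemma~\ref{easyones} to match the parity patterns of the two triples and then picks a single specific $m$, whereas you extract the parity match and the common entry directly from your characterisation of dihedral quotients via the ``largest dihedral level''; both routes arrive at a shared entry and reduce to (b).
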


\begin{proof}
%
%
(a)~This follows from Lemma~\ref{moreconditions}, since 
$(2,5,5)$ is the only hyperbolic triple with $rst = 50$, 
and $(3,4,4)$ is the only hyperbolic triple with $rst = 48$, $\lcm(r,s,t) = 12$ and $\gcd(r,s,t) = 1$, 
and $(3,3,5)$ is the only hyperbolic triple with $rst = 45$, 
and $(3,5,5)$ is the only hyperbolic triple with $rst = 75$, 
and finally, $(5,5,5)$ is the only hyperbolic triple with $rst = 125$. \\[+6pt]
\noindent 
(b)~Suppose for example that $t = w$. 
Then $rs = \frac{rst}{t} = \frac{uvw}{w} = uv$, and then since 
$rs+(r+s)t = rs+rt+st = rst(\frac{1}{r}+\frac{1}{s}+\frac{1}{t}) 
= uvw(\frac{1}{u}+\frac{1}{v}+\frac{1}{w}) = uv+uw+vw = uv+(u+v)w$, 
we find that $r+s = u+v$.  
It is now an easy exercise to deduce from $rs = uv$ and $r+s = u+v$ that 
$\{r,s\} = \{u,v\}$, and hence $(r,s,t) = (u,v,w)$. 
The same argument works for all other possible coincidences between 
entries of $(r,s,t)$ and $(u,v,w)$. 
\\[+6pt]
\noindent 
(c)~Suppose two or more of $r$, $s$ and $t$ are even.  
Then $\lcm(\gcd(r,s),\gcd(r,t),\gcd(s,t))$ is even, 
and therefore so is $\lcm(\gcd(u,v),\gcd(u,w),\gcd(v,w))$, 
and hence two or more of $u$, $v$ and $w$ are even.  
Also if all three of $r$, $s$ and $t$ are even, then $\gcd(r,s,t)$ is even, 
hence so is $\gcd(u,v,w)$, and therefore all three of $u$, $v$ and $w$ are even.  
Now let $m = \max(t,w)$ if all three of $r$, $s$ and $t$ are even, 
or otherwise let $m$ be the largest odd integer among $r,s,t,u,v$ and $w$. 
Then the dihedral group $D_m$ of order $2m$, which is $(2,2,m)$-generated, 
is a quotient of $\Gamma$ or $\Sigma$, and hence must also be 
a quotient of the other.  By definition of $m$, it follows that $m$ appears in both 
triples $(r,s,t)$ and $(u,v,w)$, and hence by part (b), we have $(r,s,t) = (u,v,w)$.
\end{proof}
%
%

To continue with the proof, we require some information about
the groups $\PSL(2,p)$, for $p$ prime. 
When $p$ is odd, the orders of the elements 
of $\PSL(2,p)$ are precisely the divisors of $p$, $\frac{p-1}{2}$ 
and $\frac{p+1}{2}$ (see \cite[Chapter 3.6]{Su} for example).   
Note that the integers $p$, $\frac{p-1}{2}$ 
and $\frac{p+1}{2}$ are pairwise coprime, 
so the order of any non-trivial element of $\PSL(2,p)$ divides exactly one 
of them. 

\smallskip
Now define the {\em $L_2$-set\/} of a triple $(k,l,m)$ to be the
(unique) set of pairwise coprime positive integers whose least common
multiple is the same as that of $\{k,l,m\}$ and which has the property
that each of $k,l$ and $m$ divides exactly one member of that set.
For example, if $k,l$ and $m$ are themselves pairwise coprime, then the
$L_2$-set of the triple $(k,l,m)$ is just $\{k,l,m\}$, while if
$\gcd(k,lm) = 1$ but $\gcd(l,m) > 1$ then its $L_2$-set is
$\{k,\lcm(l,m))\}$, and if $\gcd(k,l) > 1$ and $\gcd(l,m) > 1$ then
its $L_2$-set is $\{\lcm(k,l,m))\}$.  Note that every maximal
prime-power divisor of $\lcm(k,l,m)$ divides exactly one member of the
$L_2$-set.  


It follows from Macbeath's theorem (Theorem~\ref{macbeathsthm}) that if the triple $(k,l,m)$ is
non-exceptional, then the group $\PSL(2,p)$ is $(k,l,m)$-generated if
and only if each member of the $L_2$-set of the triple $(k,l,m)$ is
equal to $p$ or a divisor of $\frac{p \pm 1}{2}$. 
This enables us to prove the following: 

\begin{lemma}
\label{sameL2set}
If the triples $(r,s,t)$ and $(u,v,w)$ are non-exceptional, 
then they have the same $L_2$-set. \end{lemma}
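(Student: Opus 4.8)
The plan is to leverage the distinguishing quotients $\PSL(2,p)$ for primes $p$, using Macbeath's theorem in the form stated just before the lemma: for a non-exceptional triple $(k,l,m)$, the group $\PSL(2,p)$ is $(k,l,m)$-generated if and only if every member of the $L_2$-set of $(k,l,m)$ equals $p$ or divides $\frac{p\pm 1}{2}$. Since $\Gamma$ and $\Sigma$ have the same finite quotients, and by Lemma~\ref{sameeuler} the same set of $(r,s,t)$-generated quotients, the first step is to observe that $\PSL(2,p)$ is $(r,s,t)$-generated precisely when it is $(u,v,w)$-generated. So the $L_2$-sets of the two triples are ``detected by the same primes'' in the sense that $\{p\text{ prime}:\PSL(2,p)\text{ realises the }L_2\text{-set of }(r,s,t)\}=\{p\text{ prime}:\PSL(2,p)\text{ realises the }L_2\text{-set of }(u,v,w)\}$.

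Next I would translate this into an arithmetic statement about the $L_2$-sets themselves. Write the $L_2$-set of $(r,s,t)$ as $\{A_1,\dots,A_k\}$ (pairwise coprime, with $\lcm = \lcm(r,s,t)$, which by Lemma~\ref{moreconditions} already equals $\lcm(u,v,w)$), and similarly $\{B_1,\dots,B_j\}$ for $(u,v,w)$. The condition ``$\PSL(2,p)$ is $(r,s,t)$-generated'' says: for each $i$, $A_i\mid p$ or $A_i\mid \frac{p-1}{2}$ or $A_i\mid\frac{p+1}{2}$. I want to show that this family of congruence/divisibility conditions, as a subset of primes, determines the multiset $\{A_1,\dots,A_k\}$. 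The clean way is to pick, for each prime power $q^a \,\|\, \lcm(r,s,t)$, a test prime $p$ that isolates $q^a$: by Dirichlet's theorem choose $p$ with $p\equiv 1 + 2q^a \pmod{2q^{a+1}}$ and $p\not\equiv 1\pmod{\ell}$, $p\not\equiv -1\pmod{\ell}$ for every other prime $\ell\mid\lcm(r,s,t)$ (and $p\nmid\lcm$); this is arranged by the CRT on a finite set of moduli, all consistent. For such $p$, $\PSL(2,p)$ contains an element of order exactly $q^a$ (dividing $\frac{p-1}{2}$) but no element of order divisible by any other prime dividing $\lcm(r,s,t)$. Then $\PSL(2,p)$ is $(r,s,t)$-generated if and only if the member of the $L_2$-set containing $q^a$ is \emph{exactly} $q^a$ — equivalently iff $q^a$ is the \emph{only} maximal prime-power divisor of $\lcm(r,s,t)$ lying in its $L_2$-component.

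Running this test prime against both triples forces: for every prime power $q^a\,\|\,\lcm(r,s,t)=\lcm(u,v,w)$, the $L_2$-component of $(r,s,t)$ containing $q^a$ is a singleton prime-power if and only if the $L_2$-component of $(u,v,w)$ containing $q^a$ is. More generally, varying which prime powers $p$ is made to ``see'' (choosing $p\equiv 1+2d\pmod{2d\cdot q}$-type conditions for a product $d$ of several prime powers), one sees that $q^a$ and $q'^{a'}$ lie in the same $L_2$-component of $(r,s,t)$ iff they lie in the same $L_2$-component of $(u,v,w)$: the $\PSL(2,p)$ test for a prime $p$ realising exactly the divisor set $\{q^a, q'^{a'}, \dots\}$ succeeds for one triple iff it succeeds for the other. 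Since an $L_2$-set is exactly a partition of the maximal prime-power divisors of $\lcm(r,s,t)$ into coprime blocks (each block's value being the product of its members), knowing the partition is the same for both triples shows the two $L_2$-sets coincide.

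The main obstacle I anticipate is the bookkeeping around Macbeath's \emph{exceptional} triples: the criterion ``$\PSL(2,p)$ is $(r,s,t)$-generated iff each $L_2$-member divides $p$ or $\frac{p\pm1}{2}$'' is only asserted for non-exceptional triples, and the lemma's hypothesis grants exactly that. A secondary subtlety is that the relevant test prime $p$ must genuinely be \emph{large} so that $\PSL(2,p)$ is itself a hyperbolic quotient and the order structure of $\PSL(2,p)$ is the generic one ($p$, $\frac{p-1}{2}$, $\frac{p+1}{2}$ pairwise coprime, which fails only for tiny $p$); Dirichlet gives infinitely many primes in each residue class, so this is harmless, but it needs to be stated. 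One should also note the degenerate possibility that for some prime power $q^a\,\|\,\lcm(r,s,t)$ there is simply no non-exceptional-safe test prime — but since we have already reduced (via Lemma~\ref{moreconditions}) to $\lcm(r,s,t)=\lcm(u,v,w)$ and $rst=uvw$, and the exceptional triples were disposed of in Proposition~\ref{somecases1}(a), the construction above applies to the remaining triples without further ado.
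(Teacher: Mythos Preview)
Your overall strategy---use Lemma~\ref{sameeuler} to equate the sets of primes $p$ for which $\PSL(2,p)$ is $(r,s,t)$-generated and $(u,v,w)$-generated, then read off the $L_2$-set from this set of primes via Macbeath's criterion---is the right one, and it is the paper's strategy. But your specific test prime does not work. First, the congruences you impose cannot be satisfied in general: requiring $p\not\equiv\pm 1\pmod{\ell}$ is impossible for $\ell=2$ (every odd prime is $\equiv 1$) and forces $p=3$ for $\ell=3$, so whenever $2$ or $3$ divides $\lcm(r,s,t)$ your construction collapses. Second, even when all primes dividing $\lcm(r,s,t)$ exceed $3$, the test does not detect what you claim. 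If your $p$ sees only $q^a$ and no other prime of $\lcm(r,s,t)$, then \emph{every} member of the $L_2$-set other than the one containing $q^a$ fails to divide $p$ or $(p\pm1)/2$, so $\PSL(2,p)$ is $(r,s,t)$-generated only when the entire $L_2$-set is $\{q^a\}$, i.e.\ when $\lcm(r,s,t)=q^a$; it does not distinguish whether $q^a$ sits alone in its block.

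The paper fixes this by running the argument the other way round: rather than trying to \emph{suppress} primes in $\PSL(2,p)$, it makes $\PSL(2,p)$ contain elements of \emph{all} required orders for $(r,s,t)$, while forcing a specific pair $q_1,q_2$ of maximal prime powers to land on opposite sides (one dividing $(p-1)/2$, the other $(p+1)/2$). Concretely, if $q_1,q_2$ lie in the same $L_2$-block for $(u,v,w)$ but in different blocks $m_1,m_2$ for $(r,s,t)$, choose $p$ with $p\equiv 1\pmod{2m_1}$ and $p\equiv -1\pmod{2m_2}$ (and $p\equiv -1$ modulo twice every other block of $(r,s,t)$); these congruences are compatible because the blocks are pairwise coprime. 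Then every block of $(r,s,t)$ divides $(p-1)/2$ or $(p+1)/2$, so $\PSL(2,p)$ is $(r,s,t)$-generated, yet it has no element of order $q_1q_2$, so it cannot be $(u,v,w)$-generated. That is the missing idea.
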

 
\begin{proof} Suppose that the $L_2$-sets of $(r,s,t)$ and
$(u,v,w)$ are distinct.  Then, by swapping $(r,s,t)$ and $(u,v,w)$ if
necessary, we see that there must exist maximal prime-power divisors
$q_1$ and $q_2$ of $\lcm(r,s,t) = \lcm(u,v,w)$ such that $q_{1}q_{2}$
divides one member of the $L_2$-set of $(u,v,w)$, but $q_1$ and $q_2$
divide different members of the $L_2$-set of $(r,s,t)$, say $m_1$ and
$m_2$.  Now by the Chinese Remainder Theorem and Dirichlet's theorem
on primes in arithmetic progression, there are infinitely many (odd)
primes $p$ such that $p \equiv 1$ mod $2m_1$ while $p \equiv -1$ mod
$2m_2$ and $p \equiv -1$ mod $2n$ for any other member $n$ of the
$L_2$-set of $(r,s,t)$.  For any such prime $p$, the group $\PSL(2,p)$
is $(r,s,t)$-generated, since $p \equiv \pm 1$ mod $2m$ for every
member $m$ of the $L_2$-set of $(r,s,t)$.  On the other hand, 
$\PSL(2,p)$ has no element of order $q_{1}q_{2}$, since $q_1$ divides
$\frac{p-1}{2}$ while $q_2$ divides $\frac{p+1}{2}$, and therefore
$\PSL(2,p)$ cannot be $(u,v,w)$-generated, contradiction. \end{proof}

\begin{corollary}
\label{coprime}
If one of $r,s,t$ is coprime to each of the other two, then $(r,s,t) = (u,v,w)$.
In particular, if $r,s$ and $t$ are pairwise coprime, 
or equivalently, if the $(r,s,t)$ triangle group $\Gamma$ is perfect, 
then $(r,s,t) = (u,v,w)$.
\end{corollary}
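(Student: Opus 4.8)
\textbf{Proof proposal for Corollary~\ref{coprime}.}
Relabel the triple so that $r$ denotes the period that is coprime to the other two; this is harmless since the three periods enter the hypothesis symmetrically, and it ensures $s,t\ge 2$. If either $(r,s,t)$ or $(u,v,w)$ is one of Macbeath's exceptional triples, we are done by Proposition~\ref{somecases1}(a) (using the symmetry of the hypothesis to swap the two triples if necessary); note that among the hyperbolic exceptional triples only $(5,5,5)$ has no entry coprime to the other two, and a triple with repeated entries like $(5,5,5)$ is outside the scope of the corollary anyway. So we may assume both triples are non-exceptional, and then Lemma~\ref{sameL2set} tells us that $(r,s,t)$ and $(u,v,w)$ have the same $L_2$-set. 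Since $r$ is coprime to $s$ and to $t$, we have $\lcm(r,s,t)=r\cdot\lcm(s,t)$ with the two factors coprime, so (exactly as in the two examples recorded after the definition of the $L_2$-set) $r$ is one of the members of $L_2(r,s,t)$, hence one of the members of $L_2(u,v,w)$.

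The plan is now to deduce that $r$ must actually be one of $u,v,w$, after which the corollary is immediate from Proposition~\ref{somecases1}(b). Let $A\subseteq\{u,v,w\}$ be the set of entries that divide $r$. Any entry of $(u,v,w)$ sharing a prime with $r$ must divide $r$ (it divides the member of $L_2(u,v,w)$ containing that prime, which is $r$), while every remaining entry is coprime to $r$; hence $A$ is nonempty and $\lcm(A)=r$. If $|A|=3$ then $\lcm(u,v,w)=r$, contradicting $\lcm(u,v,w)=\lcm(r,s,t)\ge\lcm(r,s)=rs>r$ from Lemma~\ref{moreconditions}. If $|A|=2$, say $A=\{u,v\}$ with $w$ coprime to $r$, then $\gcd(u,v)>1$: if both of $u,v$ are proper divisors of $r$ this is forced because otherwise the primes of $r$ would fall into at least two connected components of the $L_2$-graph of $(u,v,w)$, contradicting that $r$ is a single member; and if one of $u,v$ equals $r$ it is clear. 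Write $g=\gcd(u,v)$. Then $uv=\lcm(u,v)\gcd(u,v)=rg$, so $rst=uvw$ gives $st=gw$; and $\lcm(u,v,w)=rw$ together with $\lcm(r,s,t)=r\lcm(s,t)$ gives $\lcm(s,t)=w$, whence $\gcd(s,t)=st/\lcm(s,t)=g$. But $g=\gcd(u,v)$ divides $r$, while $g=\gcd(s,t)$ is coprime to $r$; so $g=1$, a contradiction. Therefore $|A|=1$ and $r=\lcm(A)\in\{u,v,w\}$.

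Applying Proposition~\ref{somecases1}(b) with this common period yields $(r,s,t)=(u,v,w)$, proving the first assertion; the ``in particular'' clause is the case in which \emph{each} of $r,s,t$ is coprime to the other two, together with the observation (already noted in the statement) that the abelianisation of $\Delta(r,s,t)$ is trivial precisely when $r,s,t$ are pairwise coprime. The step I expect to require the most care is the case $|A|=2$: one must be confident that the (somewhat terse) definition of the $L_2$-set genuinely records the partition of the maximal prime-power divisors of the common lcm into connected components, so that ``$r$ is a single member'' legitimately forces $\gcd(u,v)>1$; granting that, the numerical identities of Lemma~\ref{moreconditions} close the argument cleanly.
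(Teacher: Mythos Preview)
Your argument is correct and rests on the same two ingredients as the paper's: Lemma~\ref{sameL2set} (the $L_2$-sets of the two triples coincide) to force $r$ to be a member of $L_2(u,v,w)$, and Proposition~\ref{somecases1}(b) to finish once a common entry is exhibited.

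The organisation differs. The paper argues by contradiction that some entry of $(u,v,w)$ must have a non-trivial factor from $r$ and a non-trivial factor from $st$, whence it divides no member of the common $L_2$-set. You instead partition $\{u,v,w\}$ by divisibility into $r$ and eliminate $|A|=3$ and $|A|=2$ directly. Your $|A|=2$ computation (deducing $\gcd(u,v)=\gcd(s,t)$, which must simultaneously divide $r$ and be coprime to $r$) is in fact the substantive case that the paper's shorter argument skates over: the inference ``each of $u,v,w$ is distinct from $r,s,t$, hence at least one divides neither $r$ nor $st$'' does not follow from $uvw=rst$ alone (take $(r,s,t)=(6,5,7)$ and $(u,v,w)=(2,3,35)$), and it is precisely the numerical identities of Lemma~\ref{moreconditions} that close this gap, as your calculation makes explicit. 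So your version is longer but more self-contained.

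Two cosmetic points. The aside that $(5,5,5)$ is ``outside the scope of the corollary'' is unnecessary and slightly misleading: the corollary's hypothesis constrains $(r,s,t)$, not $(u,v,w)$, and in any case Proposition~\ref{somecases1}(a) together with the symmetry of $\mathcal{C}(\Gamma)=\mathcal{C}(\Sigma)$ already disposes of all exceptional triples on either side. Likewise, the remark that relabelling ``ensures $s,t\ge 2$'' is redundant, since every period in a hyperbolic triple is at least $2$.
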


\begin{proof}~Clearly we need only prove the first assertion, 
and then the rest follows.  So suppose that  $(r,s,t) \ne (u,v,w)$, and also, 
say, that $\gcd(r,st) = 1$. (The other two cases are similar.) 
Now $uvw = rst$ by Lemma~\ref{moreconditions}, and each of $u,v$ and $w$ 
is distinct from $rs$ and $t$, by Proposition~\ref{somecases1}(b). 
Hence at least one of $u,v$ and $w$ divides neither $r$ nor $st$, 
and so must be of the form $cd$, where $c$ and $d$ 
are non-trivial divisors of $r$ and $st$ respectively. 
It follows that the $L_2$-sets of $(r,s,t)$ and $(u,v,w)$ are distinct, 
contradiction. \end{proof}

The next step requires a further general observation about $L_2$-sets.

\begin{lemma} 
\label{nondivisor} 
For every triple $(k,l,m)$ such that $k,l,m > 1$ and at most one of $k,l,m$ is even, 
and for every integer $q > 3$ that does not divide any of the members of 
the $L_2$-set of $(k,l,m)$, there exists a finite quotient $G$ of the $(k,l,m)$ 
triangle group such that $G$ has no element of order $q$.  
\end{lemma}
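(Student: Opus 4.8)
The plan is to realise $G$ as $\PSL(2,p)$ for a carefully chosen prime $p$ and to feed the hypotheses into Macbeath's theorem in the $L_2$-set form recorded after Theorem~\ref{macbeathsthm}. Two standard facts about $\PSL(2,p)$, for $p$ an odd prime, drive everything: (i) the orders of its elements are precisely the divisors of $p$, of $(p-1)/2$ and of $(p+1)/2$; and (ii) when $(k,l,m)$ is non-exceptional, $\PSL(2,p)$ is $(k,l,m)$-generated if and only if every member of the $L_2$-set of $(k,l,m)$ is $p$ or a divisor of $(p-1)/2$ or of $(p+1)/2$. So, choosing $p$ larger than both $\lcm(k,l,m)$ and $q$, it suffices to find such a $p$ together with a sign $\varepsilon_i\in\{+1,-1\}$ for each member $n_i$ of the $L_2$-set so that $p\equiv\varepsilon_i\pmod{2n_i}$ for every $i$, while $q$ divides neither $(p-1)/2$ nor $(p+1)/2$; then $G=\PSL(2,p)$ works.

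First I would dispense with the triples for which Macbeath's theorem is unavailable, namely the exceptional triples $(k,l,m)$ with $k,l,m>1$ and at most one even entry: these are just $(2,3,3)$, $(2,3,4)$, $(2,3,5)$, $(3,3,3)$, $(2,5,5)$, $(3,3,5)$, $(3,5,5)$ and $(5,5,5)$ (the remaining exceptional triples have an entry equal to $1$ or at least two even entries), and this list also exhausts the non-hyperbolic triples allowed by the hypotheses. Each has a small quotient with few element orders: $\Delta(2,3,3)\cong A_4$ (orders $1,2,3$), $\Delta(2,3,4)\cong S_4$ (orders $1,2,3,4$), each of $\Delta(2,3,5)$, $\Delta(2,5,5)$, $\Delta(3,3,5)$, $\Delta(3,5,5)$, $\Delta(5,5,5)$ has $A_5\cong\PSL(2,5)$ as a quotient (orders $1,2,3,5$), and $\Delta(3,3,3)$ maps onto $\Z/3\times\Z/3$ (orders $1,3$). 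In every case the hypotheses $q>3$ and $q$ coprime to the (tiny) $L_2$-set place $q$ outside the relevant list of element orders. Hence from now on $(k,l,m)$ may be assumed hyperbolic and non-exceptional.

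It remains to choose the signs and the prime. Since $n_1,\dots,n_j$ are pairwise coprime, the only common divisor of any two of the moduli $2n_1,\dots,2n_j$ is $2$, and each $\varepsilon_i$ is odd, so the congruences $p\equiv\varepsilon_i\pmod{2n_i}$ are mutually consistent for every choice of signs; together with a bounded number of auxiliary congruences modulo prime powers of primes dividing $q$ (used to pin down $\ell$-adic valuations of $p\pm1$), they cut out a single residue class coprime to its modulus, and Dirichlet's theorem then yields a prime $p$ in it, as large as desired. To force $q\nmid(p\pm1)/2$ I would split into cases according to how the prime factors of $q$ sit inside the $L_2$-set: for each prime $\ell\mid q$, either $\ell\nmid\lcm(k,l,m)$, or $\ell$ divides a unique member $n_{\mu(\ell)}$. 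If two primes of $q$ land in different places (counting ``$\ell\nmid\lcm(k,l,m)$'' as its own place), put the corresponding members on opposite sides and the rest anywhere: an odd such prime then divides exactly one of $p\pm1$, so $q$ divides neither $(p-1)/2$ nor $(p+1)/2$; the sub-case where one of the two primes is $2$ is handled using that at most one $n_i$ is even and that $v_2(p^2-1)\ge3$ for every odd prime $p$. Otherwise all prime factors of $q$ divide one fixed $n_{i_0}$ (or $q$ is coprime to $\lcm(k,l,m)$); since $q\nmid n_{i_0}$ there is a prime $\ell\mid q$ with $v_\ell(q)>v_\ell(n_{i_0})=v_\ell(\lcm(k,l,m))$, and placing $n_{i_0}$ on one side while adding one auxiliary congruence modulo $\ell^{v_\ell(n_{i_0})+1}$ (or modulo $2^{v_2(n_{i_0})+2}$ if $\ell=2$) that pins $v_\ell(p\mp1)$ to its least permitted value makes $\ell^{v_\ell(q)}$, hence $q$, divide neither $(p-1)/2$ nor $(p+1)/2$.

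The part I expect to be most delicate is the $2$-adic bookkeeping: since $v_2(p-1)+v_2(p+1)=v_2(p^2-1)\ge3$ for every odd prime $p$, exactly one of $(p-1)/2$, $(p+1)/2$ is even and one cannot keep both $2$-adically small, so one must check that the even prime of $q$ (if present) and the even member of the $L_2$-set (if present) can always be routed to the correct sides — this is precisely where the hypotheses that at most one of $k,l,m$ is even and that $q>3$ are used. Granting this, the prime $p$ produced above makes $\PSL(2,p)$ a $(k,l,m)$-generated quotient of $\Delta(k,l,m)$ (by Macbeath) with no element of order $q$ (by (i)), which is what is required.
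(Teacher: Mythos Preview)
Your proposal is correct and follows essentially the same route as the paper: dispose of the exceptional triples by exhibiting a small explicit quotient in each case (indeed you use exactly the same groups $A_4$, $S_4$, $A_5$, $C_3\times C_3$), and for non-exceptional triples take $G=\PSL(2,p)$ for a well-chosen prime $p$. The paper's version of the non-exceptional case is considerably terser than yours: it simply asserts that one may take $p$ with $p\equiv\pm1\pmod{2n_i}$ for each member $n_i$ of the $L_2$-set and $p\not\equiv\pm1\pmod{2q}$, without justifying that such a prime exists. Your case analysis (splitting on whether distinct primes of $q$ lie in different members of the $L_2$-set, and adding an auxiliary $\ell$-adic congruence otherwise) is exactly what is needed to fill in that gap, so in this sense your argument is more complete than the paper's.

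One small slip: you list $(2,3,4)$ among ``the exceptional triples with $k,l,m>1$ and at most one even entry'', but $(2,3,4)$ has two even entries and is therefore already excluded by the hypotheses of the lemma. This is harmless (your treatment of it via $S_4$ is valid anyway, and the paper makes the same over-inclusion), but you should drop it from the list or rephrase the parenthetical.
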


\begin{proof} 
This is easy to see for the exceptional triples: 
we can take $G = A_4$ for $(k,l,m) = (2,3,3)$,  
or $G = S_4$ for $(k,l,m) = (2,3,4)$, 
or $G = A_5$ for $(k,l,m) = (2,3,5)$, $(2,5,5)$, $(3,3,5)$, $(3,5,5)$ or $(5,5,5)$  
(and also $G = C_3 \times C_3$ for $(k,l,m) = (3,3,3)$).
For any non-exceptional triple $(k,l,m)$, we can take $G = \PSL(2,p)$, where $p$ is 
a prime such that $p \equiv \pm 1$ modulo twice each of  the members of 
the $L_2$-set of $(k,l,m)$, but $p \not\equiv \pm 1$ modulo $2q$. 
\end{proof}

\begin{corollary}
\label{nodivisors} 
The integers $u,v$ and $w$ do not have non-trivial divisors $u',v'$ and $w'$ 
such that one of $r,s$ and $t$ is coprime to each of $6,u',v'$ and $w'$. 
\end{corollary}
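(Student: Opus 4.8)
The plan is to argue by contradiction, using Lemma~\ref{nondivisor} to produce a finite group that is a quotient of both $\Gamma=\Delta(r,s,t)$ and $\Sigma=\Delta(u,v,w)$ whose omission of certain element orders is incompatible with $\C(\Gamma)=\C(\Sigma)$. Suppose, then, that $u'\mid u$, $v'\mid v$, $w'\mid w$ are non-trivial divisors and that $x\in\{r,s,t\}$ satisfies $\gcd(x,6u'v'w')=1$; after relabelling we may take $x=t$. By Proposition~\ref{somecases1}(a), together with the symmetry of the hypothesis in the two triples, we may assume that neither $(r,s,t)$ nor $(u,v,w)$ is exceptional, and since the argument of Proposition~\ref{somecases1}(c) is also symmetric in the two triples, we may assume that at most one of $r,s,t$ and at most one of $u,v,w$ is even, hence that at most one of $u',v',w'$ is even. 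Replacing $u',v',w'$ by prime divisors of themselves only strengthens the hypothesis $\gcd(x,6u'v'w')=1$ and keeps at most one of them even, so we may also assume $u',v',w'$ are primes. Finally, fix a prime $\ell$ dividing $x$; since $\gcd(x,6)=1$ we have $\ell\ge5$, and since $\gcd(x,u'v'w')=1$ the prime $\ell$ divides no member of the $L_2$-set of $(u',v',w')$ (each such member divides $\lcm(u',v',w')\mid u'v'w'$).

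Applying Lemma~\ref{nondivisor} to the triple $(u',v',w')$ with $q=\ell$ — whose hypotheses $u',v',w'>1$, at most one of them even, $\ell>3$, and $\ell$ dividing no member of the $L_2$-set are all in hand — yields a finite quotient $G$ of the $(u',v',w')$ triangle group having no element of order $\ell$; moreover, from the proof of Lemma~\ref{nondivisor} we may take $G$ to be $(u',v',w')$-generated, and in the main (non-exceptional) case to equal $\PSL(2,p)$ for a suitably large prime $p$ satisfying the congruences forcing $(u',v',w')$-generation and with $p\not\equiv\pm1$ modulo $2\ell$, so that $\PSL(2,p)$ in fact contains no element whose order is divisible by $\ell$. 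Since $u'\mid u$, $v'\mid v$ and $w'\mid w$, the identity on generators induces a surjection $\Sigma=\Delta(u,v,w)\to\Delta(u',v',w')$; hence $G$ is a finite quotient of $\Sigma$, and since $\C(\Gamma)=\C(\Sigma)$ it is a finite quotient of $\Gamma=\Delta(r,s,t)$ as well. Fix an epimorphism $\Gamma\to G$; as $G$ is non-cyclic, none of the images of the standard generators is trivial, so $G$ is $(r_0,s_0,t_0)$-generated for some divisors $r_0\mid r$, $s_0\mid s$, $t_0\mid t$ with $r_0,s_0,t_0\ge2$, and each of $r_0,s_0,t_0$ is coprime to $\ell$ because $G$ has no element of order divisible by $\ell$. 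In particular $t_0$ divides the $\ell$-free part of $t$, while $\ell$ itself divides $t$; the same applies on the $\Sigma$-side, since $\ell$ divides the entry of $(u,v,w)$ lying in the member of the common $L_2$-set that is divisible by $\ell$ (Lemma~\ref{sameL2set}).

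It remains to extract a contradiction, and this is where the real work lies. A natural route is to refine the extension-comparison argument used in the proof of Lemma~\ref{sameeuler}: for $n$ coprime to $\lcm(r,s,t)\,|G|$, compare the largest quotient of $\Gamma$ that is an extension of an abelian group of exponent $n$ by $G$ with the corresponding largest quotient of $\Sigma$. Each of these has order $n^{2g}|G|$, where $2-2g$ is determined, via the Riemann-Hurwitz formula, by the divisor triple of $(r,s,t)$, respectively of $(u,v,w)$, of least Euler contribution among those over which $G$ is generated; the two genera must coincide because $\C(\Gamma)=\C(\Sigma)$. The point is that, since $G$ omits every order divisible by $\ell$, these extremal divisor triples are forced to avoid the $\ell$-parts of $t$ and of the corresponding entry of $(u,v,w)$, whereas $\lcm(r,s,t)=\lcm(u,v,w)$ and $rst=uvw$ (Lemma~\ref{moreconditions}) constrain where those $\ell$-parts can sit relative to the common $L_2$-set; choosing $p$ (hence $G$) so that $G$ realises every order dividing $\lcm(r,s,t)$ that is coprime to $\ell$ should then force the two extremal genera apart, the desired contradiction. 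The small cases — where $(u',v',w')$ or $(r_0,s_0,t_0)$ is spherical, Euclidean, or one of the exceptional triples of Theorem~\ref{macbeathsthm} — are treated by hand, using that $A_4$, $S_4$, $A_5$ and $C_3\times C_3$ contain no element of order $6$ and none of order exceeding $5$. I expect the main obstacle to be precisely this final step: verifying that the required congruences on $p$ can be imposed simultaneously and that they are strong enough to separate the genera for \emph{every} admissible divisor triple of the two signatures.
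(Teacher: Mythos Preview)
Your setup is sound, but you miss the simple endgame and then head off into an unnecessary (and, as you yourself concede, incomplete) genus-comparison argument.

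The gap is this. You pick a single prime $\ell$ dividing $q$ (your $x=t$), and construct $G$ with no element of order divisible by $\ell$. All that tells you about a putative epimorphism $\Gamma\to G$ is that the image $t_0$ of the order-$t$ generator has $\ell\nmid t_0$. Since $t$ may well have other prime factors, $t_0$ need not be $1$, and you are stuck --- hence your attempted detour through abelian extensions and Riemann--Hurwitz, which you do not complete.

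The paper's move is to suppress \emph{all} of $q$ at once, not just one prime factor. Since $q$ is coprime to $6$, every prime divisor $p_i$ of $q$ is at least $5$; since $q$ is coprime to $u'v'w'$, each $p_i$ is coprime to $\lcm(u',v',w')$ and hence to every member of the $L_2$-set of $(u',v',w')$. By the Chinese Remainder Theorem and Dirichlet, one may choose an odd prime $p$ with $p\equiv\pm1$ modulo twice every member of that $L_2$-set, while simultaneously $p\not\equiv\pm1\pmod{2p_i}$ and $p\neq p_i$ for every prime $p_i\mid q$. Then $G=\PSL(2,p)$ is $(u',v',w')$-generated and contains \emph{no non-trivial element whose order divides $q$}. (The exceptional and small cases are handled by $A_4$, $A_5$, or $C_3\times C_3$, whose element orders lie in $\{1,2,3,5\}$, all coprime to $q$.) Now any homomorphism $\Delta(r,s,t)\to G$ must kill the generator of order $q$, so its image is cyclic; since $G$ is not cyclic, $G$ is not a quotient of $\Gamma$. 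But $G$ is a quotient of $\Sigma=\Delta(u,v,w)$ via $\Delta(u,v,w)\twoheadrightarrow\Delta(u',v',w')$, contradicting $\C(\Gamma)=\C(\Sigma)$.

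So: drop the reduction of $u',v',w'$ to primes (it buys nothing), drop the single-prime $\ell$, and strengthen the application of Lemma~\ref{nondivisor} to exclude every prime factor of $q$ simultaneously. The contradiction is then immediate, and the entire second half of your argument is unnecessary.
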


\begin{proof} 
Suppose to the contrary that $q \in \{r,s,t\}$ is coprime to $6,u',v'$ and $w'$. 
Then there exists a finite quotient $G$ of the triangle group $\Delta(u',v',w')$ such 
that $G$ has no non-trivial element of order dividing $q$. But then this group $G$ 
is a quotient of $\Sigma = \Delta(u,v,w)$ but not of $\Gamma = \Delta(r,s,t)$, contradiction. 
\end{proof} 

(As an illustration, consider the triples $(13,15,117)$ and $(9,39,65)$, which 
satisfy the conclusions of Lemmas~\ref{easyones},~\ref{sameeuler} and~\ref{moreconditions}. 
We can `suppress' $q = 13$ by taking $(u',v',w') = (9,3,5)$, and then find that $\PSL(2,19)$ is 
a quotient of $\Delta(9,39,65)$ but not of $\Delta(13,15,117)$.) 

The observations we have made so far are sufficient to distinguish most triangle groups from 
each other, using just abelian, dihedral and 2-dimensional projective quotients.  
But these are not completely sufficient.  
For example, consider the triples $(15,42,63)$ and $(21,21,90)$, which satisfy the 
conclusions of  Lemmas~\ref{easyones},~\ref{sameeuler} and~\ref{moreconditions}, 
but do not satisfy the hypothesis of Corollary~\ref{coprime} and do not admit the 
kinds of divisors met in Corollary~\ref{nodivisors}.  
For such triples, we need to consider further types of quotients, and it turns out that 
direct products give us almost all we need to complete a proof of the theorem. 
We will use the easily proved fact that if if $G$ and $H$ are finite groups that are 
$(r_1,s_1,t_1)$- and $(r_2,s_2,t_2)$-generated, say by element triples 
$(x_1,y_1,z_1)$ and $(x_2,y_2,z_2)$ respectively, 
and we let $r = \lcm(r_1,r_2)$, $s = \lcm(s_1,s_2)$ and $t = \lcm(t_1,t_2)$, 
then some subgroup of the direct product $G\times H$ is $(r,s,t)$-generated, 
by the triple $((x_1,x_2),(y_1,y_2),(z_1,z_2))$.

\begin{lemma}
\label{q1q2}
If $q_1$ and $q_2$ are coprime positive integers, 
each greater than $3$, such that $q_{1}q_{2}$ divides at least one of $u,v$ and $w$, 
then either $q_{1}q_{2}$ divides at least one of $r,s$ and $t$, 
or otherwise one of $r,s$ and $t$ is prime and equal to $q_1$ or $q_2$.
\end{lemma}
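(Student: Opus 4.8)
The plan is to argue by contradiction: assuming the conclusion fails, I would produce a finite group that is a quotient of $\Sigma=\Delta(u,v,w)$ but not of $\Gamma=\Delta(r,s,t)$, contradicting $\C(\Gamma)=\C(\Sigma)$. After relabelling we may assume $q_1q_2\mid u$, so that $q_1\mid u$ and $q_2\mid u$. Suppose, contrary to the statement, that $q_1q_2$ divides none of $r,s,t$ and that neither $q_1$ nor $q_2$ is a prime occurring among $r,s,t$.

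The first step is to build, for each $i\in\{1,2\}$, a finite group $H_i$ that is a smooth quotient of some triangle group $\Delta(q_i,m_i,n_i)$ with $m_i\mid v$ and $n_i\mid w$ (so that $H_i$ is a quotient of $\Delta(q_i,v,w)$, hence of $\Sigma$), and that is \emph{lean}, meaning every element order of $H_i$ is coprime to $q_{3-i}$ while every generating triple of $H_i$ has a generator of order divisible by $q_i$. When $(q_i,m_i,n_i)$ can be chosen non-exceptional, I would take $H_i=\PSL(2,p_i)$ for a prime $p_i$ selected via the Chinese Remainder Theorem and Dirichlet's theorem so that $p_i\equiv\pm1$ modulo twice each member of the $L_2$-set of $(q_i,m_i,n_i)$ but $p_i(p_i^2-1)$ is coprime to $q_{3-i}$; the description of $(k,l,m)$-generation of $\PSL(2,p)$ via $L_2$-sets used in Lemmas~\ref{sameL2set} and~\ref{nondivisor} then supplies the leanness, and the finitely many exceptional configurations are handled directly with $A_4$, $S_4$, $A_5$ as in the proof of Lemma~\ref{nondivisor}. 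By the direct-product observation recorded just before the statement, $H_1\times H_2$ then contains a subgroup $G$ that is $(\lcm(q_1,q_2),\lcm(m_1,m_2),\lcm(n_1,n_2))$-generated; as $q_1,q_2$ are coprime the first entry is $q_1q_2$, and the other two divide $v$ and $w$, so $G$ is a quotient of $\Delta(q_1q_2,v,w)$ and, since $q_1q_2\mid u$, of $\Sigma$. A routine application of Goursat's lemma, using that the $\PSL(2,p_i)$ are simple and (for $p_1\neq p_2$) non-isomorphic, shows this subgroup is all of $H_1\times H_2$, so $G$ retains an element of order $q_1q_2$.

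Next I would suppose $G$ is also a quotient of $\Gamma=\Delta(r,s,t)$, so $G$ is $(r',s',t')$-generated for some divisors $r'\mid r$, $s'\mid s$, $t'\mid t$. Projecting a generating triple onto the factors $H_1,H_2$ and invoking leanness, some entry of $\{r',s',t'\}$ — hence of $\{r,s,t\}$ — is divisible by $q_1$, and some entry by $q_2$. If it is the same entry, coprimality forces $q_1q_2$ to divide it, a contradiction. If the entries are distinct, I would bring in the $L_2$-set bookkeeping: as in the proof of Lemma~\ref{sameL2set}, $q_1q_2\mid u$ forces $q_1q_2$ to divide a single member $M$ of the common $L_2$-set of the two triples, and $M$ is the least common multiple of a single connected component of $(r,s,t)$, so the two entries lie in one component of size at least $2$. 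Re-running the construction with $q_1,q_2$ replaced by a prime divisor of one of them together with the complementary cofactor, and iterating, one is pushed back into the previous case, or into a configuration already excluded by Corollary~\ref{coprime} or Corollary~\ref{nodivisors}, or finally into the alternative that one of $r,s,t$ is a prime equal to $q_1$ or $q_2$ — which is exactly the escape clause of the statement. In every case one obtains a contradiction with $\C(\Gamma)=\C(\Sigma)$ unless the stated conclusion already holds.

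The hard part will be the construction of the groups $H_i$ with the right balance of properties: they must sit inside $\Sigma$ through the single slot $u$ using only divisors of $v$ and $w$, yet be lean enough that $H_1\times H_2$ admits no generating triple dividing $\{r,s,t\}$. This is precisely where Macbeath's theorem (Theorem~\ref{macbeathsthm}) and the $L_2$-set calculus are indispensable, and where the finitely many exceptional triples of Macbeath have to be disposed of separately. A secondary subtlety is confirming that the subgroup of $H_1\times H_2$ cut out by the product triple is the whole direct product, so that it genuinely retains an element of order $q_1q_2$; this is routine given the simplicity of the $\PSL(2,p_i)$, but it forces the choice $p_1\neq p_2$.
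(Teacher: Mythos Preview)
There is a genuine gap in the ``leanness'' condition on $H_i=\PSL(2,p_i)$: you cannot arrange that every generating triple of $\PSL(2,p_i)$ has a generator of order divisible by $q_i$. The $L_2$-set machinery tells you for which triples $(k,l,m)$ the group $\PSL(2,p)$ is a smooth quotient of $\Delta(k,l,m)$, but it does not restrict the \emph{other} triangle groups of which $\PSL(2,p)$ is also a quotient. Concretely, if $q_1=5$ and $p_1>5$ is any prime with $5\mid p_1^{\,2}-1$ (as your construction requires), then by Macbeath's theorem $\PSL(2,p_1)$ is also $(2,3,p_1)$-generated, and none of $2,3,p_1$ is divisible by $5$. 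So when you project an $(r',s',t')$-generating triple of $H_1\times H_2$ down to $H_1$, nothing forces any coordinate to have order divisible by $q_1$, and your deduction that some entry of $\{r,s,t\}$ is divisible by $q_1$ collapses. Without that foothold, the subsequent $L_2$-set bookkeeping and the ``re-running and iterating'' step have nothing to stand on.

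The paper sidesteps this by reversing the direction: it builds a quotient of $\Gamma$ rather than of $\Sigma$. Each of $r,s,t$ is split as $r=\lcm(r_1,r_2)$, etc., into two sub-triples; a prime $p$ is chosen so that $\PSL(2,p)$ is $(r_1,s_1,t_1)$-generated but has no element whose order is simultaneously divisible by a factor of $q_1$ and a factor of $q_2$; and $G$ is taken $(r_2,s_2,t_2)$-generated with no element of order divisible by $q_1$ or $q_2$ (via Lemma~\ref{nondivisor}). A subgroup of $\PSL(2,p)\times G$ is then $(r,s,t)$-generated but contains \emph{no element} of order $q_1q_2$ --- a statement about element orders, not about generating triples, and hence trivially verified in a direct product --- so it cannot be $(u,v,w)$-generated. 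The escape clause in the statement arises exactly when one of $r_2,s_2,t_2$ is forced to be $1$, which happens only when the corresponding entry of $(r,s,t)$ is a prime equal to $q_1$ or $q_2$.
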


\begin{proof} 
Suppose $q_{1}q_{2}$ divides at least one of $u,v$ and $w$, 
but divides none of $r,s$ and $t$.    
Choose non-trivial divisors $r_1$ and $r_2$ of $r$, 
and non-trivial divisors $s_1$ and $s_2$ of $s$, 
and non-trivial divisors $t_1$ and $t_2$ of $t$, 
as large as possible, such that \\[-18pt]
\begin{itemize}  
\item[{\rm (i)}] $r = \lcm(r_1,r_2)$, $s = \lcm(s_1,s_2)$, and $t = \lcm(t_1,t_2)$, \\[-18pt] 
\item[{\rm (ii)}] each of $r_1,s_1$ and $t_1$ is coprime to $q_1$ to $q_2$, 
with at least one being coprime to $q_1$ and at least one being coprime to $q_2$, 
and  \\[-18pt] 
\item[{\rm (iii)}] no member of the $L_2$-set of $(r_2,s_2,t_2)$  is divisible 
by $q_1$ or $q_2$.  \\[-18pt]
\end{itemize}
It is an easy exercise to see that such a choice is 
always possible.  For example, write $\lcm(r,s,t)$ as $m_{1}m_{2}$, where 
$\gcd(m_1,m_2) = 1$ and $q_1$ divides $m_1$ while $q_2$ divides $m_2$. 
If $q_1$ divides $r$, then take $r_1 = \gcd(r,m_1)$, and take $r_2$ as $\gcd(r,m_2)$
if the latter is positive, or the largest divisor of $r$ not divisible by $q_1$ 
if $r$ divides $m_1$. 
Alternatively, if $q_2$ divides $r$, then take $r_1 = \gcd(r,m_2)$, and 
take $r_2$ as $\gcd(r,m_1)$ if the latter is positive, or the largest divisor 
of $r$ not divisible by $q_2$ if $r$ divides $m_2$.  
If neither $q_1$ nor $q_2$ divides $r$, then take $r_1$ and $r_2$ 
to be $gcd(r,m_1)$ and $\gcd(r,m_2)$ (in either order) if these are both positive, 
or $r_1 = r_2 = r$ if $r$ divides $m_1$ or $m_2$.  
Then make the analogous choices for $s_1$ and $s_2$, and similarly 
for $t_1$ and $t_2$. 
 
With $r_i,s_i$ and $t_i$ chosen this way, it is not difficult to see 
that $(r_1,s_1,t_1)$ is non-exceptional, and more importantly, there exists 
some prime $p$ for which $\PSL(2,p)$ is $(r_1,s_1,t_1)$-generated 
but has no element of order $q_{1}c$ with $\gcd(c,q_2) > 1$ 
or order $q_{2}d$ with $\gcd(d,q_1) > 1$.  
(Hence in particular, $\PSL(2,p)$ has no element of order $q_{1}q_{2}$.) 

Now if the triple $(r_2,s_2,t_2)$ consists of  integers greater than $1$, let $G$ be 
any finite group that is $(r_2,s_2,t_2)$-generated but has no element of 
order divisible by $q_1$ or $q_2$. 
Then some subgroup of the direct product $\PSL(2,p_1) \times G$ 
is $(r,s,t)$-generated, but has no element of order $q_{1}q_{2}$, 
and hence cannot be $(u,v,w)$-generated, contradiction.  
Consequently, at least one of $r_2,s_2$ or $t_2$ is $1$, 
and so (by our choice of $r_i,s_i$ and $t_i$) at least one of $r,s$ and $t$ 
must be equal to $q_1$ or $q_2$, and be prime. 
\end{proof}

We now have enough to prove the main theorem.  

\begin{proof}[{\bf Proof of Theorem~\ref{trianglegroups}}] 
Assume the theorem is false, so that $\Gamma = \Delta(r,s,t)$ and $\Sigma = \Delta(u,v,w)$ 
have the same finite quotients, but $(r,s,t) \ne (u,v,w)$. 

\smallskip
Consider the largest of the six integers $r,s,t,u,v$ and $w$. 
By swapping the roles of $(r,s,t)$ and $(u,v,w)$ if necessary, we may assume 
that $w = \max\{r,s,t,u,v,w\}$. Then by Proposition~\ref{somecases1}(b), 
we know that $w$ is greater than each of $r,s$ and $t$  
(and in particular, $w$ cannot divide $r,s$ or $t$). 
 
If $w$ is a prime-power, then $w$ must divide $\lcm(r,s,t) = \lcm(u,v,w)$ 
and so divides at least one of $r,s$ or $t$, contradiction. 
Thus  $w$ is composite, say $w = q_{1}q_{2}$, with $\gcd(q_1,q_2) = 1$ 
and $1 < q_1 < q_2 < w$.  
Moreover, $q_{1}q_{2} = w$ divides none of $r,s$ and $t$. 
Hence by Lemma~\ref{q1q2} and some elementary number theory, 
we find that one of the following must hold: \\[-15pt]
\begin{itemize}  
\item[{\rm (a)}] $q_1 = 2$ and $q_2$ is an odd prime-power, \\[-18pt] 
\item[{\rm (b)}] $q_1 = 3$ and $q_2$ is a prime-power (not divisible by $3$),  \\[-18pt] 
\item[{\rm (c)}] one of $q_{1}$ and $q_{2}$ is prime and equal to one (or more) of $r,s$ and $t$, 
 and the other is a prime-power, or  \\[-18pt]
\item[{\rm (d)}] $w = 6p$ where $p$ is a prime greater than $3$, and $p$ is equal to one (or more) of $r,s$ and $t$.  \\[-15pt]
\end{itemize}

We will eliminate each of these four cases in turn. 

\medskip
\noindent
{\bf Case (a)}: Here we can write $q_2 = p^k$ where $p$ is an odd prime.  
Then since $p^k$ divides $\lcm(u,v,w) = \lcm(r,s,t)$, we know that $p^k$ divides 
at least one of $r,s$ and $t$, and then since $\max\{r,s,t\} < w = 2p^k$, 
it follows that $p^k$ is equal to at least one of $r,s$ and $t$. 
Similarly, another one of $r,s$ and $t$ is even.
Next, by the argument used in the proof of Lemma~\ref{moreconditions}, 
we can write $\{r,s,t\} = \{2bp^i,cp^j,p^k\}$ 
and $\{u,v,w\} = \{dp^i,ep^j,2p^k\}$, where $b,c,d$ and $e$ are positive integers 
coprime to $p$.  Note also that $bc = de$ since $rst = uvw$, 
and that $i < k$ and $j \le k$ since $\max\{r,s,t\} < w = 2p^k$. \smallskip

Now $rs+rt+st = 2bcp^{i+j}+2bp^{i+k}+cp^{j+k}$ while 
$uv+uw+vw = dep^{i+j}+2dp^{i+k}+2ep^{j+k}$, and since these are equal by 
Lemma~\ref{moreconditions}, we find that $2bc+2bp^{k-j}+cp^{k-i} = de+2dp^{k-j}+2ep^{k-i}$.  
Hence if $j < k$ then $p$ divides $2bc - de = 2bc - bc = bc$, 
which is impossible, and therefore $j = k$. 
In turn, this forces $c = 1$ (because $cp^k = cp^j \le \max\{r,s,t\} < w = 2p^k$) 
and then $e = 2$ (because $p^k$ cannot lie in both $\{r,s,t\}$ and $\{u,v,w\}$, 
by Proposition~\ref{somecases1}(b)), 
but that is impossible, because at most one of $u,v,w$ is even (by Proposition~\ref{somecases1}(c)). 

\medskip
\noindent
{\bf Case (b)}: As in case (a), we can write $q_2 = p^k$ where $p$ is prime, and this time we deduce that 
at least one of $r,s$ and $t$ is equal to $p^k$ or $2p^k$. \smallskip

First, suppose that $p^k \in \{r,s,t\}$.  Then $p \ne 2$, for otherwise the fact that at 
most one of $r,s$ and $t$ is even would imply that $p^k$ is coprime to the other two entries 
of the triple $(r,s,t)$, which is impossible by Corollary~\ref{coprime}.  Thus $p \ge 5$. 
Next, as in case (a), we can write $\{r,s,t\} = \{3bp^i,cp^j,p^k\}$ and $\{u,v,w\} = \{dp^i,ep^j,3p^k\}$, 
where $b,c,d$ and $e$ are positive integers coprime to $p$, with $bc = de$, 
and $i < k$ while $j \le k$.  Equating $rs+rt+st$ with $uv+uw+vw$ and then 
dividing by $p^{i+j}$ gives $3bc+3bp^{k-j}+cp^{k-i} = de+3dp^{k-j}+3ep^{k-i}$. \smallskip

If $j < k$, then $p$ divides $3bc-de = 2bc$, which is impossible,  
so $j = k$.  
This further implies that $c = 1$ or $2$ and $e = 2$ or $3$ and $c \ne e$, 
by Proposition~\ref{somecases1}(b) and the maximality of $w = 3p^k$.  
Also $d = 1$, or else we could divide through by powers of $p$ and apply 
Corollary~\ref{nodivisors}.  Hence in particular, $bc = e$.  
Now if $c = 2$ then $e = 2 = c$, contradiction, so $c = 1$, and $b = e \in \{2,3\}$.
If $b = 2$, however, then comparison of $rs+rt+st$ with $uv+uw+vw$ gives 
$12+p^{k-i} = 5+6p^{k-i}$ and so $5p^{k-i} = 7$, which is impossible, 
and on the other hand, if $b = 3$, then $\lcm(r,s,t) = \lcm(9p^i,p^k,p^k) = 9p^k$ 
while $\lcm(u,v,w) = \lcm(p^i,3p^k,3p^k) = 3p^k$, another contradiction.   \smallskip

Thus $p^k \not\in \{r,s,t\}$, and it follows that one of $r,s,t$ is equal to $2p^k$.  
This time we can write $\{r,s,t\} = \{3bp^i,cp^j,2p^k\}$ and $\{u,v,w\} = \{dp^i,ep^j,3p^k\}$, 
where $b,c,d$ and $e$ are positive integers coprime to $p$, with $2bc = de$, 
and again $i < k$ while $j \le k$.  Equating $rs+rt+st$ with $uv+uw+vw$ and 
dividing by $p^{i+j}$ gives $3bc+6bp^{k-j}+2cp^{k-i} = de+3dp^{k-j}+3ep^{k-i}$, 
and hence if $j < k$ we find that $p$ divides $3bc-de = 3bc-2bc = bc$, contradiction. 
Thus $j = k$.  
Now $c \le 2$ (by maximality of $w = 3p^k$), but on the other hand, 
$c \ne 1$ since $p^k \not\in \{r,s,t\}$,  and $c \ne 2$ since at most one of $r,s,t$ can 
be even, so again we reach a contradiction.  

\medskip
\noindent
{\bf Case (c)}:  Here each $q_i$ is greater than $3$, for otherwise case (a) or (b) applies, 
and so each $q_i$ is coprime to $6$.   

If $q_i$ is prime and and lies in $\{r,s,t\}$, 
then divide each of $u,v,w$ by the highest power of $q_i$ possible, 
to obtain a triple $(u',v',w')$ of integers each of which is coprime to $q_i$. 
Then by Corollary~\ref{nodivisors}, at least one of these integers must be $1$, so 
at least one of $u,v$ and $w$ is a power of $q_i$. 
By Proposition~\ref{somecases1}(b), however, none of them can equal $q_i$,  
so at least one is divisible by ${q_i}^2$.  
On the other hand, none of them is divisible by ${q_2}^2$, 
since ${q_2}^2 > q_{1}q_{2} = w = \max\{r,s,t,u,v,w\}$, and therefore $i = 1$. 
Thus $q_1$ is prime and lies in $\{r,s,t\}$, and at least one of $u,v,w$ is 
divisible by ${q_1}^2$.  \smallskip

Let  $q_1^{\,\gamma}$ be the largest power of $q_1$ that equals one 
or more of $u,v,w$.  Then $\gamma > 1$, and $q_1^{\,\gamma}$ divides one or 
more of $r,s,t$, since $\lcm(r,s,t) = \lcm(u,v,w)$.   
Similarly $q_2$ divides one or more of $r,s,t$, 
but none of $r,s,t$ is divisible by $q_1^{\,\gamma}q_2$ since 
 $q_1^{\,\gamma}q_2 \ge q_{1}q_{2} = w$.
Thus we can write $\{r,s,t\} = \{q_1,bq_1^{\,\gamma},cq_2\}$ 
and $\{u,v,w\} = \{q_1^{\,\gamma},d,q_{1}q_2\}$, 
where $b,c$ and $d$ are positive integers with $b$ coprime to $q_1$,  
and $1 < b <  q_2$ and $c < q_1$ (by Proposition~\ref{somecases1}(b) and maximality of $w$), 
and $d = bc$ (since $rst = uvw$).  
In particular, each of $b,c$ and $d$ is coprime to $q_1$.  \smallskip

Now if $b > 3$, then since $\gamma > 1$ we find that $bq_1^{\,\gamma}$ divides 
none of $\{q_1^{\,\gamma},d,q_{1}q_2\} = \{u,v,w\}$, and then Lemma~\ref{q1q2} gives 
a contradiction.  Hence $b = 2$ or $3$. 
On the other hand, comparing $rs+rt+st$ with $uv+uw+vw$ 
gives  $bq_1^{\,\gamma} + cq_{2} + bcq_1^{\,\gamma-1}q_2 
= dq_1^{\,\gamma-1} + q_1^{\,\gamma}q_2 + dq_2$, 
from which it follows that $q_1$ divides $dq_{2}-cq_{2} = bcq_{2}-cq_{2} = (b-1)cq_{2}$, 
which is impossible (since $q_1 > 3 \ge b$). 

\medskip
\noindent
{\bf Case (d)}: In this final case, we can divide each of $u,v,w$ by the highest power of $p$ possible, 
to obtain a triple $(u',v',w')$ of integers each of which is coprime to $p$. 
By Corollary~\ref{nodivisors}, at least one of $u',v',w'$ must be $1$, so 
at least one of $u,v,w$ is a power of $p$. 
On the other hand, by Proposition~\ref{somecases1}(b), none of $u,v,w$ can equal $p$,  
so at least one of $u,v,w$ is divisible by $p^2$.  
Therefore $p^2$ must divide at least one of $r,s,t$ (since $\lcm(r,s,t) = \lcm(u,v,w)$), 
and again by Proposition~\ref{somecases1}(b), it follows that at least one of $r,s,t,u,v,w$ is 
divisible by $kp^2$ for some $k \ge 2$.
This, however, is impossible because $kp^3 \ge 2p^2 > 6p = w$. 
\end{proof}

We acknowledge the use of {\sc Magma}~\cite{Magma} in helping us find a way to this proof. 
Just as a matter of of interest, we would like to point out that there 
are 3581 pairs of distinct triples $\{(r,s,t), (u,v,w) \}$ with $2 \le r \le s \le t$ and $2 \le u \le v \le w$ 
and $rst = uvw \le$ 12,000,000, 
satisfying the conclusions of Lemmas~\ref{easyones} and~\ref{moreconditions}, 
with at most one of $r,s,t$ being even. 
About half of these 3581 pairs can be eliminated using Corollary~\ref{coprime} 
(the `coprime' test), and then most of the remaining pairs can be ruled out 
by Lemma~\ref{q1q2} (using direct products). 
Just one such small pair cannot be ruled out in this way, 
namely $\{(17,162,459),(27,34,1377)\}$), but this can be eliminated 
by Corollary~\ref{nodivisors} (since $ r= 17$ is coprime to $6, 27, 34/17$ and  $1377/17$).

\bigskip
\noindent  Mathematical Institute,\\
University of Oxford,\\
Andrew Wiles Building,\\
ROQ, Woodstock Road\\ 
Oxford OX2 6GG, U.K.

\noindent Email:~bridson@maths.ox.ac.uk\\

\noindent Department of Mathematics,\\
University of Auckland,\\
Private Bag 92019,\\
Auckland 1142, New Zealand.

\noindent Email:~m.conder@auckland.ac.nz\\

\noindent Department of Mathematics,\\
University of Texas,\\
Austin, TX 78712, U.S.A.

\noindent Email:~areid@math.utexas.edu\\


\begin{thebibliography}{9999}

\bibitem{agol} I.~Agol, {\em The virtual Haken conjecture} (with an appendix by
I.~Agol, D.~Groves and J.~Manning), 
\newblock {\em Documenta Math.} {\bf{18}} (2013), 1045--1087.

\bibitem{AGM} I.~Agol, D.~Groves and J.~Manning, {\em Residual finiteness, QCERF
and fillings of hyperbolic groups}, Geometry and Topology {\bf 13} 
(2009), 1043--1073.

\bibitem{ALR} I.~Agol, D.D.~Long and A.W.~Reid, {\em The Bianchi groups are separable on geometrically finite subgroups}, 
Annals of Math.~{\bf 153} (2001), 599--621. 

\bibitem{A1} M.~Aka, {\em Profinite completions and Kazhdan's Property T}, 
Groups, Geometry and Dynamics, {\bf 6}, (2012), 221--229.

\bibitem{A2} M.~Aka, {\em Arithmetic groups with isomorphic finite quotients},
J. Algebra {\bf 352}, (2012), 322--340.

\bibitem{burger} J.~Amor\'os, M.~Burger, K.~Corlette, D.~Kotschick and 
D.~Toledo,  {\em Fundamental Groups of Compact K\"ahler Manifolds},
Mathematical Surveys and Monographs, {\bf 44} American Mathematical Society, 
(1996).

\bibitem{BaumslagRoseblade}  G.~Baumslag and J.E.~Roseblade, {\em Subgroups of
direct products of free groups}, J.~London Math.~Soc.~{\bf 30} (1984), 44--52. 

\bibitem{brm} G.~Baumslag, A.~Myasnikov, and V.~Remeslennikov, {\em 
Algebraic geometry over groups.~{I}. {A}lgebraic sets and ideal theory},
J.~Algebra {\bf 219} (1999), 16--79.

\bibitem{Magma} W.~Bosma, J.~Cannon and C.~Playoust,
{\em The {\sc Magma} Algebra System I: The User Language},
J.~Symbolic Computation {\bf 24} (1997), 235--265. 

\bibitem{BGSS} E.~Breuillard, T.~Gelander, J.~Souto and P.~Storm, {\em Dense 
embeddings of surface groups}, Geometry and Topology {\bf 10} 
(2006), 1373--1389.

\bibitem{BG} M.R.~Bridson and F.~Grunewald, {\em Grothendieck's problems 
concerning profinite completions and representations of groups},  
Annals of Math.~{\bf 160}  (2004), 359--373.

\bibitem{BRe} M.R~Bridson and A.W~Reid, {\em Nilpotent completions
of groups, Grothendieck Pairs and four problems of Baumslag}, Int. Math. Res. Not. IMRN 2014 (on-line),
DOI: 10.1093/imrn/rnt353. 

\bibitem{BW} M.R.~Bridson and H.~Wilton, {\em Subgroup separability in residually free groups}, Math.~Zeit.~{\bf 260} (2008), 25--30.

\bibitem{Cal} D.~Calegari, {\em Surface subgroups from homology},  
Geometry and Topology {\bf 12} (2008), 1995--2007. 

\bibitem{CG} J.~Cheeger and M.~Gromov, {\em $L^2$-cohomology and group
cohomology}, Topology {\bf 25} (1986), 189--215.

\bibitem{CLR} T.~Chinburg, D.D.~Long and A.W.~Reid, {\em Cusps of minimal non-compact arithmetic hyperbolic 3-orbifolds},  Pure Appl.~Math.~Q.~{\bf 4}
(2008), 1013--1031.

\bibitem{Con} M.~Conder, {\em Hurwitz groups: a brief survey}, 
Bull.~Amer.~Math.~Soc.~{\bf 23}  (1990),  359--370.

\bibitem{CM} H.S.M.~Coxeter and W.O.J.~Moser,
{\em Generators and Relations for Discrete Groups}, 4th ed., Springer, Berlin (1980).

\bibitem{DFPR} J.D.~Dixon, E.W.~Formanek, J.C.~Poland, L.~Ribes,  
{\em Profinite completions and isomorphic finite quotients}, 
J.~Pure Appl.~Algebra {\bf 23} (1982), 227--231. 

\bibitem{Ev} B.~Everitt, {\em Alternating quotients of Fuchsian groups},
J.~Algebra  {\bf 223}  (2000),  457--476.

\bibitem{Fu} L. Funar, {\em Torus bundles not distinguished by TQFT invariants},
ArXiv:1101.0509.

\bibitem{GW} C.McA.~Gordon and H.~Wilton, {\em On surface subgroups of 
doubles of free groups}, J.~London Math.~Soc. {\bf 82}  (2010), 17--31

\bibitem{GJZ} F.~Grunewald, A.~Jaikin-Zapirain, and P.A.~Zalesskii, 
{\em Cohomological goodness and the profinite completion of Bianchi groups},
Duke Math.~J.~{\bf 144}  (2008),  53--72.

\bibitem{GZ} F.~Grunewald and P.~Zalesskii, {\em Genus for groups}, J. Algebra 
{\bf 326} (2011), 130--168.

\bibitem{HeR} W.~Herfort and L.~Ribes, {\em Torsion elements and 
centralizers in free products of profinite groups},  
J.~Reine Angew.~Math.~{\bf 358}  (1985), 155--161.

\bibitem{JL} M.~Jarden and A.~Lubotzky, {\em Elementary equivalence 
of profinite groups},   Bull.~London Math.~Soc.~{\bf 40}  (2008),  887--896.
 
\bibitem{KaMa} J.~Kahn and V.~Markovic, {\em Immersing almost geodesic surfaces
in a closed hyperbolic three manifold}, Annals of Math.~{\bf 175} (2012),
1127--1190.

\bibitem{KM} O.G.~Kharlampovich and A.~Myasnikov, {\em Elementary
theory of free non-abelian groups}, J.~Algebra {\bf 302} (2006), 451--552.

\bibitem{KO} S.~Kim and and S.~Oum, {\em Hyperbolic surface subgroups of 
one-ended doubles of free groups}, J. of Topology, to appear.

\bibitem{KW} S.~Kim and H.~Wilton, {\em Polygonal words in free groups}, 
Quarterly J. Math. {\bf 63} (2012), 399--421.

\bibitem{LS} M.~Liebeck and A.~Shalev, {\em Fuchsian groups, finite simple groups and representation varieties}  Invent.~Math.~{\bf 159} (2005), 317--367.

\bibitem{LMR} D.D.~Long, C.~Maclachlan, and A.W.~Reid, {\em Splitting groups of signature $(1;n)$},  J.~Algebra  {\bf 185}  (1996),  329--341. 

\bibitem{LR} D.D.~Long and A.W.~Reid, {\em Subgroup separability and 
virtual retractions of groups},  Topology {\bf 47}  (2008), 137--159. 

\bibitem{Lo} J.~Lott, {\em Deficiencies of lattice subgroups of Lie groups},
Bull.~London Math.~Soc..~{\bf 31} (1999), 191--195.

\bibitem{Lu1} W.~L\"uck, {\em Approximating $L^2$-invariants by their 
finite-dimensional analogues}, Geom.~Funct.~Anal.~{\bf 4} (1994), 455--481. 

\bibitem{Lu2} W.~L\"uck, {\em $L^2$-invariants of regular 
coverings of compact manifolds and CW-complexes}, Handbook of 
Geometric Topology,  North-Holland, Amsterdam (2002), 735--817.

\bibitem{Mac} A.M.~Macbeath, {\em Generators of the fractional linear groups},
Proc.~Sympos.~Pure Math.~{\bf XII}, Amer.~Math.~Soc., (1969), 14--32.

\bibitem{MR}  C.~Maclachlan and A.W.~Reid, {\em The Arithmetic
of Hyperbolic 3-Manifolds}, Graduate Texts in Mathematics {\bf 219},
Springer-Verlag (2003). 

\bibitem{Marg} G.~Margulis, {\em Discrete Subgroups of Semi-simple Lie
Groups}, Ergeb.~der Math.~{\bf 17}, Springer-Verlag (1989).

\bibitem{McR} D.B.~McReynolds, {\em Peripheral separability and cusps of 
arithmetic hyperbolic orbifolds}, Algebr.~Geom.~Topol.~{\bf 4} (2004), 721--755.

\bibitem{MZ} O.V.~Melnikov and P.A.~Zalesskii, {\em Subgroups of
profinite groups acting on trees}, Math.~USSR Sb.~{\bf 63} (1989), 405--424. 

\bibitem{MSP} T.~M\"uller and J-C.~Schlage-Puchta, {\em Character theory of 
symmetric groups, subgroup growth of Fuchsian groups, and random walks},  
Advances in Math.~{\bf 213}  (2007), 919--982.  

\bibitem{NYG} New York Group Theory Co-operative, {\em Problems}. 

\bibitem{NS} N.~Nikolov and D.~Segal, {\em On finitely generated profinite 
groups.~I. Strong completeness and uniform bounds},  Annals of Math.~{\bf 165}
(2007), 171--238. 

\bibitem{Pi} P.F.~Pickel, {\em Finitely generated nilpotent groups with isomorphic quotients}, Trans Amer.~Math. Soc.~{\bf 160} (1971), 327--341.

\bibitem{rag} M. S. Raghunathan, {\em Discrete Subgroups of Lie Groups},
Ergebnisse der Mathematik und ihrer Grenzgebiete, {\bf 68}
Springer-Verlag (1972).

\bibitem{rem} V. N. Remeslennikov,
{\em Conjugacy of subgroups in nilpotent groups,} Algebra i Logika Sem.
{\bf 6} (1967),  61--76. (Russian)

\bibitem{RZ} L.~Ribes and P.A.~Zalesskii, {\em Profinite Groups}, 
Ergeb.~der Math.~{\bf 40}, Springer-Verlag (2000).

\bibitem{Sc} G.P.~Scott, {\em Subgroups of surface groups are almost
 geometric}, J. London Math. Soc.~{\bf 17} (1978), 555--565. See
 also {\em ibid Correction:} J.~London Math.~Soc.~{\bf 32} (1985),
 217--220.

\bibitem{segal} D.~Segal, {\em Polycyclic Groups}, Cambridge University Press
(1983).

\bibitem{sela} Z.~Sela, {\em Diophantine geometry over groups.~{VI}.  
The elementary theory of a free group},
Geom.~Funct.~Anal.~{\bf 16} (2006), 707--730.

\bibitem{Se} J-P.~Serre, {\em Galois Cohomology}, Springer-Verlag (1997).

\bibitem{St} P.F.~Stebe, {\em A residual property of certain groups},
Proc.~Amer.~Math. Soc. {\bf 26}  (1970), 37--42.

\bibitem{Su} M.~Suzuki, {\em Group Theory I}, Grundlehren der mathematischen
Wissen.~{\bf 247}, Springer-Verlag, (1982). 

\bibitem{Wil} H.~Wilton, {\em Hall's theorem for limit groups}, 
Geom.~Funct.~Anal.~{\bf 18} (2008), 271--303. 

\bibitem{wiseSC} D.T. Wise,
\newblock {\em Cubulating small cancellation groups},
\newblock Geom.~Funct.~Anal.~{\bf 14} (2004), 150--214. 

\bibitem{Wis} D.T. Wise, {\em The structure of groups with a
quasi-convex hierarchy}, preprint 2012.

\end{thebibliography}
\end{document}